\theoremstyle{plain}
\newtheorem{theor}{Theorem}[section]
\newtheorem{prop}[theor]{Proposition}
\newtheorem{lemma}[theor]{Lemma}
\newtheorem{rem}[theor]{Remark}
\theoremstyle{remark}
\def\R{{\mathbb R}}
\def\Z{{\mathbb Z}}
\def\C{\mathbb{C}}
\def\Re{\mbox{Re}}
\def\Im{\mbox{Im}}
\def\Prob{{\mathbb P}}
\def\Exp{{\mathbb E}}
\def\Event{{\mathcal E}}
\def\row{R}
\def\Proj{P}
\def\spn{{\rm span}\,}
\def\supp{{\rm supp }}
\def\Tr{{\mathrm{Tr}}}
\def\idmat{{\rm Id}}
\def\d{{\rm dist}}
\def\Net{{\mathcal N}}
\newcommand{\eps}{\varepsilon}
\def\HS{{\rm HS}}
\newcommand{\Mc}{\mathcal{M}_{n,d}}
\def\Om0{\Omega_0}
\def\HS{{\rm HS}}
\def\B{{\mathcal B_n}}
\def\A{A_n}
\title{Circular law for sparse random regular digraphs}
\author{
Alexander E. Litvak
\and
Anna Lytova
\and
Konstantin Tikhomirov
\and
Nicole Tomczak-Jaegermann
\and
Pierre Youssef
}
\newcommand\address{\noindent\leavevmode

\medskip
\noindent
Alexander E. Litvak
and Nicole Tomczak-Jaegermann,\\
Dept.~of Math.~and Stat.~Sciences,\\
University of Alberta, \\
Edmonton, AB, Canada, T6G 2G1.\\
\texttt{\small
e-mails:  aelitvak@gmail.com \, \, and \, \,
nicole.tomczak@ualberta.ca}\\

\medskip

\noindent
Anna Lytova,\\
Faculty of Math., Physics, and Comp. Science,\\
University of Opole,\\
plac Kopernika 11A, 45-040,\\
Opole, Poland.\\
\texttt{\small
e-mail:
alytova@math.uni.opole.pl
}\\

\medskip

\noindent
 Konstantin Tikhomirov,\\
Dept.~of Math.,
Princeton University,\\
Fine Hall, Washington road,\\
Princeton, NJ 08544.\\
\texttt{\small
e-mail:   kt12@math.princeton.edu}\\

\medskip

\noindent
Pierre Youssef,\\
Universit\'e Paris Diderot,\\
Laboratoire de Probabilit\'es et de mod\`eles al\'eatoires,\\
75013 Paris, France.\\
\texttt{\small
e-mail:  youssef@math.univ-paris-diderot.fr}
}
\begin{document}

\maketitle

\begin{abstract}
Fix a constant $C\geq 1$ and let $d=d(n)$ satisfy $d\leq \ln^{C} n$ for every large integer $n$.
Denote by $\A$ the adjacency matrix of a uniform random directed $d$-regular graph
on $n$ vertices. We show that, as long as $d\to\infty$ with $n$, the empirical spectral distribution of
appropriately rescaled matrix $\A$ converges weakly
in probability to the circular law. This result, together with an earlier work of Cook,
completely settles the problem of weak convergence of the empirical distribution in directed $d$-regular setting
with the degree tending to infinity.
As a crucial element of our proof,
we develop a technique of bounding intermediate singular values of $\A$ based on
studying random normals to rowspaces and on constructing a product structure to deal with the lack of independence
between the matrix entries.
\end{abstract}

\noindent
{\small \bf AMS 2010 Classification:}
{\small
primary: 60B20, 15B52, 46B06, 05C80;
secondary: 46B09, 60C05
}

\noindent
{\small \bf Keywords: }
{\small
Circular law,
logarithmic potential,
random graphs, random matrices, regular graphs, sparse matrices, intermediate singular values.}

\section{Introduction}
\label{s:intro}

Given an $n\times n$ random matrix $B$, its \textit{empirical spectral distribution} (ESD)
is the random probability measure on $\C$ given by
$$
\mu_B:=\frac{1}{n}\sum_{i=1}^n \delta_{\lambda_i},
$$
where $(\lambda_i)_{i\leq n}$ denote the eigenvalues of $B$
(with multiplicities counted, and enumerated in arbitrary order).
The study of the empirical spectral distribution is one of the major research directions in the theory of random matrices,
with applications to other fields \cite{Mehta, AGZ, BS, PaSh, EY}.
A fundamental fact in this area is the universality phenomenon which
asserts that under very general conditions the empirical spectral distribution
and some other characteristics of a random matrix
asymptotically behave similarly to the empirical distribution (or corresponding characteristics)
of the Gaussian random matrix of an appropriate symmetry type.
This phenomenon has been confirmed for various models and in various senses (including limiting laws for the ESD,
local eigenvalue statistics, distribution of eigenvectors).
We refer to monographs \cite{AGZ, BS, PaSh, EY} for a (partial) exposition of the results.

In case of non-Hermitian random matrices with i.i.d.\ entries, the limit of the empirical spectral distribution
is governed by the {\it circular law}.
Compared to ESD's of the {\it Wigner} (Hermitian with i.i.d.\ entries above the diagonal) and {\it Wishart} (sample covariance matrices),
the study of the spectral distribution in the non-Hermitian setting is complicated by its instability under small perturbations
of the matrix entries, and by the fact that some of the standard techniques, involving the moment method and
truncation of the matrix entries,
fail in the non-Hermitian case (we refer to \cite[Section~11.1]{BS} for more information).
As a specific example, while the bulk of the ESD of Hermitian matrices
is stable under small-rank perturbations due to interlacing properties,
the spectrum of random non-Hermitian matrices can be very sensitive even to a rank one perturbation
(see \cite[Example~11.1]{BS} or \cite[Example~1.2]{BC}).

Denote by $\mu_{circ}$ the unifom probability measure on the unit disk of the complex plane,
that is
$$\mu_{circ}=\pi^{-1}{\bf 1}_{|z|\leq 1}.$$
Convergence of the appropriately rescaled empirical spectral distribution of the standard Gaussian matrix with i.i.d.\ complex entries
was derived in the first edition of monograph \cite{Mehta} (see \cite[Chapter~15]{Mehta}), and, much later,
a corresponding result in the real case was obtained in \cite{Edelman ESD}.
Both  results relied on the explicit formula for joint distribution of eigenvalues, which is available
in the Gaussian setting
\cite{Ginibre}.
The circular law for non-Gaussian matrices with bounded densities of the entries
was verified in \cite{Bai circular};
the density condition was removed in \cite{PZ, GT, TV10}, with paper \cite{TV10} establishing the circular law
for the i.i.d.\ model under weakest moment assumptions.
The sparse i.i.d.\ model was considered in papers \cite{TV sparse, GT, BasRud circ}.
We refer to \cite{BC} for a detailed exposition and historical overview of the circular law in the i.i.d.\ setting,
and for further references.
For a review of other recent developments, including the limiting laws for inhomogeneous matrices
and the local circular law, we refer to the introduction of \cite{Cook-circ}.

\smallskip

In this paper, we are concerned with a sparse model of random matrices whose entries are not independent.
In what follows, for every positive integers $d\leq n$
we denote by
$\Mc$ the set of all $n\times n$ matrices whose entries take values in $\{0,1\}$ and
the sum of elements within each row and each column is equal to $d$.
In other words, $\Mc$ is the set of adjacency matrices of $d$-regular directed graphs on $n$ vertices,
where we allow loops but do not allow multiple edges. We consider the random matrix $\A$ uniformly distributed on $\Mc$.
Random directed $d$-regular graphs provide a basic model of a {\it typical} graph with predefined
in- and out-degree sequences
and in this connection are of interest in network analysis.
In more general setting, random (weighted) directed graphs are used to model connections between neurons and the
eigenvalue distribution of their adjacency matrices (the {\it synaptic matrices} for the neural networks)
has been given considerable attention in literature. We refer to the introduction of \cite{Cook-circ}
for a discussion of those works.

In the directed $d$-regular setting, it was conjectured (see \cite[Section 7]{BC})
that for any fixed $3\leq d\leq n-3$, $\mu_{\A}$ converges to the probability measure
$$
 \frac{1}{\pi}\frac{d^2(d-1)}{(d^2-\vert z\vert^2)^2} \mathbf{1}_{\{\vert z\vert <\sqrt{d}\}}\, dx\, dy.
$$
as $n$ goes to infinity.
This measure is usually referred to as the {\it oriented Kesten--McKay distribution},
a non-symmetric version of the classical Kesten--McKay law for the limiting ESD of random undirected $d$-degular graphs
\cite{kesten, mckay, BHY}. Up to rescaling by $\sqrt{d}$, this measure tends to the circular law as $d$ tends to infinity.
Proving the above conjecture remains a major challenge as of this writing.

\smallskip

In this paper we establish the circular law for sparse random directed $d$-regular graphs for any $d$ going to infinity with $n$.
We prove the following theorem.

\begin{theor}[The circular law]\label{t: circular law}
Fix a constant $C\geq 1$ and for any $n>1$ let $d=d(n)$ be a positive integer satisfying $d\leq \ln^{C} n$.
Assume that $d\to\infty$ with $n$.
Then the sequence of empirical spectral distributions $(\mu_{d^{-1/2}\A})_{n}$ corresponding to $\A\in\Mc$
converges weakly in probability to the uniform distribution on the unit disk of the complex plane.
\end{theor}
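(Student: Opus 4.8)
The plan is to use Girko's Hermitization (logarithmic potential) method. Fix $z\in\C$ outside a Lebesgue-null set and set $M:=M_n(z):=d^{-1/2}\A-z\,\idmat$, with singular values $s_1(M)\ge\dots\ge s_n(M)\ge 0$ and empirical singular value measure $\nu_{n,z}:=\frac1n\sum_{j=1}^n\delta_{s_j(M)}$. By the Hermitization lemma in the form used for the circular law, to prove that $\mu_{d^{-1/2}\A}$ converges weakly in probability to $\mu_{circ}$ it suffices to show that, for almost every $z$, the quantity
\[
L_n(z)\;:=\;\frac1n\log\bigl|\det M_n(z)\bigr|\;=\;\int_0^\infty\log s\;d\nu_{n,z}(s)
\]
converges in probability to $\int_\C\log|z-w|\,d\mu_{circ}(w)$. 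I would deduce this from two statements: \textbf{(i)} \emph{bulk convergence} --- $\nu_{n,z}$ converges weakly in probability to a deterministic measure $\nu_z$, the same limiting singular value distribution that appears in the proof of the circular law for matrices with i.i.d.\ entries, for which it is known that $\int_0^\infty\log s\,d\nu_z(s)=\int_\C\log|z-w|\,d\mu_{circ}(w)$; and \textbf{(ii)} \emph{uniform integrability} of $\log$ against $\nu_{n,z}$ with probability $1-o(1)$. The latter breaks into an upper tail bound --- $\|M_n(z)\|\le\|d^{-1/2}\A\|+|z|=O(1)$ with high probability --- and a lower tail bound of the form $\frac1n\sum_{j:\,s_j(M)<\delta}\log\frac1{s_j(M)}\to 0$ as $\delta\to 0$, uniformly in $n$ and with probability $1-o(1)$, which is implied by polynomial lower bounds on the small and on the intermediate singular values of $M_n(z)$.

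The upper norm bound is available from the theory of sparse random regular digraphs (combining an $\ell_\infty\to\ell_2$ estimate for the ``bulk'' of $\A$ with an $\eps$-net over the sphere, or by citing existing results). The bulk convergence \textbf{(i)} can be obtained via the resolvent / Stieltjes transform analysis of the Hermitized matrix as in Cook's treatment of the singular value distribution of $\A-z\,\idmat$ \cite{Cook-circ}; in the sparse regime the additional input needed is concentration of the Stieltjes transform of $M_n^{*}M_n$ about its mean, which one obtains by exploiting the combinatorial structure of $\Mc$ (switchings and bounded-difference arguments). Consequently the core of the proof --- and the place where a new technique is required --- is \textbf{(ii)}, the lower bounds on singular values; the remaining paragraphs concern only those.

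For the smallest singular value the task is \emph{quantitative invertibility}: producing constants $B=B(C)$ and $c>0$ with $\Prob\{s_n(M_n(z))\le n^{-B}\}\le n^{-c}$. Following the now-standard scheme, I would split a hypothetical near-null unit vector $x$ according to whether it is \emph{compressible} (close to a vector supported on few coordinates) or \emph{spread}. Compressible directions are ruled out by a union bound over an $\eps$-net together with a small-ball estimate for $\|M_n(z)x\|$ at a fixed $x$; spread directions are handled via the reduction $s_n(M_n(z))\gtrsim n^{-1/2}\,\d(\row_n,H_n)$, where $\row_n$ is the last row of $M_n(z)$ and $H_n$ the span of the remaining rows, combined with $\d(\row_n,H_n)=|\langle\row_n,v\rangle|$ for a unit normal $v$ to $H_n$. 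One then needs anti-concentration for $\langle\row_n,v\rangle$ given $v$. The obstruction is that the entries within a row of $\A$ are \emph{not independent} (a row is a uniform $0/1$ vector with exactly $d$ ones) and the rows are globally coupled through the column-sum constraints, so Littlewood--Offord cannot be invoked directly. The remedy is a \emph{product structure}: conditionally on a suitable sub-$\sigma$-algebra --- informally, freezing the bipartite adjacency structure outside a controlled family of rows and columns and exposing the remaining edges in stages, or decomposing a regularized version of $\A$ into independent random matchings --- the residual randomness decouples into independent blocks on which Littlewood--Offord--type small-ball inequalities do apply, \emph{provided} $v$ is not concentrated on a small set of coordinates. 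Hence one must also show that a random unit normal $v$ to the rowspace is, with high probability, \emph{spread}, which is again done by a net / union-bound argument carried out over the conditional product structure.

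The intermediate singular value bounds --- $s_{n-k}(M_n(z))\gtrsim(k/n)^{O(1)}$, say for $k$ up to a polylogarithmic fraction of $n$ --- follow the same philosophy with an added layer. By the negative second moment identity, $\sum_{j}s_j(M)^{-2}=\sum_{i=1}^n\d(\row_i,H_i)^{-2}$, so it is enough to bound these $n$ row-to-hyperplane distances from below \emph{simultaneously}; after the reduction above this amounts to uniform spreadness of random normals (now to subspaces of corank up to $O(k)$) together with the conditional anti-concentration estimate, applied on the product structure. The principal obstacle throughout is exactly the \emph{lack of independence} between the entries of $\A$: every ingredient that in the i.i.d.\ setting rests on independence --- small-ball probabilities, tensorization and decoupling, and union bounds over $\eps$-nets --- has to be rebuilt on top of the conditional product decomposition of the uniform measure on $\Mc$; and since $d$ is permitted to grow arbitrarily slowly (only $d\to\infty$, with $d\le\ln^{C}n$), one must check that this product structure still carries enough randomness for the anti-concentration bounds to beat the entropy of the relevant nets.
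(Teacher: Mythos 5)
Your overall architecture coincides with the paper's: Hermitization, bulk convergence of the singular value distribution imported from Cook's work, a deterministic Hilbert--Schmidt (or norm) bound for the top of the spectrum, the negative second moment identity reducing intermediate singular values to row-to-subspace distances, and anti-concentration for $\langle \row_i,v\rangle$ against a normal $v$ that must be shown to be spread. You also correctly identify the lack of independence and the entropy-versus-small-ball tension as the central difficulty. However, there is a genuine gap at exactly the step you flag but do not resolve: you propose to establish spreadness of the random normal ``by a net / union-bound argument carried out over the conditional product structure.'' In the regime of this theorem $d$ may grow arbitrarily slowly, so any small-ball estimate for a single row against a fixed direction decays only polynomially in $1/d$ (a row has merely $d$ random nonzero entries), and tensorizing over at most $O(n/\,\mathrm{polylog})$ conditionally independent rows cannot produce probabilities of order $e^{-cn}$; hence no union bound over an exponential-size net of candidate normals can close. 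The paper circumvents nets entirely: it uses a deterministic structural dichotomy for \emph{all} normal vectors (very steep, or sloping with many levels, Theorem~\ref{t: kernel restated}), and then injects auxiliary Gaussian randomness --- writing $\d(\row_i,E)^2=\Exp_G|\langle \Proj_{E^\perp}G,\row_i\rangle|^2$ and proving via the strongly-correlated-indices construction (Lemmas~\ref{l: Uells}--\ref{l: strg corr to structure}) that the \emph{Gaussian} normal $\Proj_{E^\perp}G$ has as many as $n/|I^c|$ coordinate levels with probability $1-O(|I^c|/n)$. This Gaussian averaging, not a net, is the new idea, and without it (or a substitute) your step (ii) does not go through in the sparse range.

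Two smaller points. First, your claimed intermediate bound $s_{n-k}\gtrsim(k/n)^{O(1)}$ in the window between $k\approx n/\ln^2 n$ and $k\approx nd^{-c}$ is stronger than what these methods yield; the paper only obtains $s_{n-k}\geq\exp(-C(n/k)^{1/144})$ there (Theorem~\ref{th: inter-sv}), which is nonetheless sufficient for uniform integrability after a dyadic summation because the number of indices involved is $O(nd^{-3/2})$. A linear-type bound $s_{n-k}\geq ck/n$ is available only for $k\gtrsim nd^{-1/48}$, via comparison with the Bernoulli model (Proposition~\ref{prop2: cook-anti}); that comparison fails for smaller $k$ precisely because conditioning on $d$-regularity costs a superexponential factor. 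Second, the smallest singular value bound need not be reproved from scratch; it is Theorem~\ref{t: smin}, and your compressible/incompressible sketch is essentially the route of that cited work.
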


The circular law for $d$-regular digraphs
in the range $\ln^{96} n\leq \min(d,n-d)$ was verified in earlier work \cite{Cook-circ} (see also \cite{BCZ}).
Thus, our Theorem~\ref{t: circular law} closes the gap between known limiting distribution for denser $d$-regular digraphs
and the conjectured oriented Kesten--McKay limiting distribution for $d$-regular digraphs of constant degree.
The proof of Theorem~\ref{t: circular law} combines some known methods
used previously in works on the circular law,
with crucial new ingredients related to estimating the {\it intermediate} singular values of the shifted adjacency matrix.
The rest of the introduction is divided into two parts. In the first part, we recall known techniques
(such as Hermitization)  and previously established facts
about $d$-regular digraphs that will be needed for the proof. In the second part, we discuss limitations of existing tools
(see remarks after Proposition~\ref{prop2: cook-anti}) and consider our approach to bounding intermediate singular values of $\A-z\,\idmat$.

\smallskip

As in works \cite{Girko, Bai circular, GT, TV10} dealing with the i.i.d.\ setting, a key element in the proof of the circular law
for $d$-regular digraphs is
to transport the problem of the limiting ESD
to the singular values distribution, which is much easier to study.
This method -- called the Hermitization technique --
goes back to Girko \cite{Girko} and exploits
a close relation between the log-potential functions of the spectral and singular values distributions.
Following Girko, this idea was used in various papers dealing with non-Hermitian random matrices, in particular \cite{Bai circular, GT, TV10}.
The Hermitization technique is presented in literature in somewhat different forms;
below we follow the exposition in \cite{BC}.

The \textit{singular values distribution} of an $n\times n$ random matrix $B$ is the random probability measure on $\R$ given by
$$
\nu_B:=\frac{1}{n}\sum_{i=1}^n \delta_{s_i},
$$
where $(s_i)_{i\leq n}$ denote the singular values of $B$.
Everywhere in this paper, we use non-increasing ordering for the singular values,
so that $s_1=s_1(B)$ is the largest one and $s_n=s_n(B)$ is the smallest one.

The \textit{logarithmic potential} $U_\mu: \C\to (-\infty,\infty]$ of a probability measure $\mu$ on $\C$
is defined for any $z\in \C$  by
$$
U_\mu(z):=-\int_{\C} \ln\vert z-\lambda\vert \,d\mu(\lambda).
$$
The logarithmic potential function uniquely determines the underlying measure,
that is, if $U_{\mu}=U_{\mu'}$  Lebesgue almost everywhere then $\mu=\mu'$ (see, in particular, \cite[Lemma~4.1]{BC}).

Given an $n\times n$ matrix $B$, it is easy to check that
$$
U_{\mu_B}(z)= -\frac{1}{n} \ln\vert {\rm det}(B-z\idmat)\vert= -\int_{0}^\infty \ln(t) \,d\nu_{B-z\idmat}(t)
=-\frac{1}{n}\sum\limits_{i=1}^n \ln(s_i(B-z\idmat)).
$$
Therefore, knowing $\nu_{B-z\idmat}$
for almost all $z\in \C$, we can determine $U_{\mu_B}$, hence $\mu_B$ itself.
This observation lies at the heart of the method.
Below we state its formalized version.

\begin{lemma}[{Hermitization, see \cite[Lemma~4.3]{BC}}]\label{lem: hermitization}
For each $n$, let $B_n$ be an $n\times n$ complex random matrix,
and assume that for Lebesgue almost all $z\in \C$, one has
\begin{itemize}
\item[(i)] There exists a probability measure $\nu_z$ on $\R_+$ such that $\nu_{B_n-z\idmat}$ tends weakly to $\nu_z$ in probability;
\item[(ii)] $\ln$ is uniformly integrable for $\nu_{B_n-z\idmat}$ in probability, i.e.\ for every $\varepsilon>0$ there exists
$T=T(z,\varepsilon)<\infty$ such that
$$
\sup_{n} \Prob\left\{ \int_{\{\vert \ln(s)\vert >T\}} \vert \ln(s)\vert\, d\nu_{B_n-z\idmat}(s)>\varepsilon\right\}\leq \varepsilon.
$$
\end{itemize}
Then $\mu_{B_n}$ converges weakly in probability to the unique probability measure $\mu$ on
$\C$ whose logarithmic potential function is given by
\begin{equation}\label{eq: hermit-log}
U_\mu(z)=-\int_{0}^\infty \ln(s)d\nu_z(s).
\end{equation}

\end{lemma}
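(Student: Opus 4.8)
The plan is to transfer the convergence from the measures $\mu_{B_n}$ to their logarithmic potentials, using the identity $U_{\mu_{B_n}}(z)=-\int_0^\infty\ln(s)\,d\nu_{B_n-z\idmat}(s)$ recorded above. The first step is to show that (i) and (ii) together force, for Lebesgue-a.e.\ $z\in\C$, the convergence in probability $U_{\mu_{B_n}}(z)\to U_\mu(z):=-\int_0^\infty\ln(s)\,d\nu_z(s)$. This is a truncation argument: write $\ln(s)=g_T(s)+(\ln(s)-g_T(s))$ with $g_T(s)=\max(-T,\min(T,\ln s))$, which is bounded and continuous on $[0,\infty)$; the $g_T$-integral against $\nu_{B_n-z\idmat}$ converges to the one against $\nu_z$ in probability by (i), the remainder against $\nu_{B_n-z\idmat}$ is controlled uniformly in $n$ by (ii), and a Fatou-type argument using (ii) shows $\ln\in L^1(\nu_z)$, so that the corresponding $\nu_z$-remainder tends to $0$ as $T\to\infty$. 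Note that (ii) is indispensable here: the ``test function'' $\ln$ blows up both at $0$ and at $\infty$, so mere weak convergence of $\nu_{B_n-z\idmat}$ would not suffice.

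Second, I would pass from pointwise-a.e.\ convergence in probability of the potentials to weak convergence in probability of $\mu_{B_n}$ via the subsequence principle: since the space of probability measures on $\C$ with the weak topology is Polish, it is enough to show that every subsequence admits a further subsequence along which $\mu_{B_n}\to\mu$ weakly almost surely. Fixing a subsequence, I choose a countable dense subset $D$ of the full-measure set of ``good'' $z$ produced in Step~1 and, by a diagonal extraction, pass to a further subsequence along which, almost surely, $U_{\mu_{B_n}}(z)\to U_\mu(z)$ for every $z\in D$.

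Third, on this almost sure event I invoke the standard potential-theoretic mechanism (as in \cite[Section~4]{BC}). The functions $-U_{\mu_{B_n}}=\ln|\cdot|*\mu_{B_n}$ are subharmonic on $\C\simeq\R^2$ and, once one knows that $\{\mu_{B_n}\}$ is tight, locally uniformly bounded above; hence from any subsequence one extracts a sub-subsequence converging in $L^1_{\mathrm{loc}}(\C)$, and since the almost-everywhere limit must be $U_\mu$ by Step~2, the whole sequence converges to $U_\mu$ in $L^1_{\mathrm{loc}}$. Applying the distributional Laplacian (which sends $U_{\mu_{B_n}}$ to $-2\pi\mu_{B_n}$ and $U_\mu$ to $-2\pi\mu$) then yields $\mu_{B_n}\to\mu$ in the vague sense, and tightness promotes this to weak convergence. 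Tightness of $\{\mu_{B_n}\}$ in probability is itself extracted from (i)--(ii): by Weyl majorization one has $\int(\ln|\lambda|)_+\,d\mu_{B_n}(\lambda)\le\int(\ln s)_+\,d\nu_{B_n}(s)$, whose right-hand side is bounded in probability (comparing $\nu_{B_n}$ with $\nu_{B_n-z_0\idmat}$ at a good $z_0$ near the origin by a perturbation estimate), so $\mu_{B_n}(\{|z|\ge R\})\le(\ln R)^{-1}\int(\ln|\lambda|)_+\,d\mu_{B_n}$ is uniformly small. Finally, existence and uniqueness of the limiting $\mu$ whose logarithmic potential equals $-\int_0^\infty\ln(s)\,d\nu_z(s)$ is exactly the assertion that the logarithmic potential determines the measure, \cite[Lemma~4.1]{BC}.

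\emph{The hard part} is not the probabilistic bookkeeping (truncation, subsequence extraction, diagonalization, all routine) but the analytic passage in Step~3 from ``$U_{\mu_{B_n}}\to U_\mu$ pointwise on a dense set'' to ``$\mu_{B_n}\to\mu$ weakly'': one needs the uniform local upper bound and subharmonic compactness that deliver $L^1_{\mathrm{loc}}$-convergence of the potentials, together with the tightness that prevents spectral mass from escaping to infinity, and it is precisely here that the quantitative content of (i) and (ii)—beyond bare pointwise convergence of potentials—is used.
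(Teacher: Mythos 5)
The paper does not prove this lemma itself — it is quoted verbatim from \cite[Lemma~4.3]{BC} — and your proposal correctly reconstructs exactly the argument given there: the truncation/uniform-integrability step to get pointwise convergence in probability of the logarithmic potentials, the subsequence reduction, and the subharmonicity/$L^1_{\mathrm{loc}}$-compactness/distributional-Laplacian mechanism together with the Weyl-majorization tightness bound. So your proof is correct and follows the same route as the cited source; no discrepancy to report.
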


Thus in order to establish the circular law, one needs to show the convergence of
the empirical singular values distribution and the uniform integrability of the logarithm.
For the first part, we will rely on a recent result of Cook \cite{Cook-circ},
who uses the above strategy in order to establish
the circular law for the uniform model on $\Mc$ for $d\geq \ln^{96}n$.
The following is a version of Proposition~7.2 in \cite{Cook-circ}. Note that its
proof doesn't require that $d$ is at least polylogarithmic in $n$; just $d\to \infty$ is enough.

\begin{prop}[{Weak convergence of singular values distributions, \cite{Cook-circ}}]
\label{p:prop-conv-esv}
Assume that $d=d(n)=o(\sqrt{n})$ and $d\to\infty$ together with $n$.
Then for each $z\in \C$, there exists a probability measure $\nu_z$ on $\R_+$
such that $\nu_{d^{-1/2} A_n- z\idmat}$ converges weakly in probability to $\nu_z$ as $n\to\infty$.
Moreover, the family $\{\nu_z\}_{z\in \C}$ satisfies \eqref{eq: hermit-log} with $\mu=\mu_{circ}$.
\end{prop}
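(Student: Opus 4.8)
\medskip\noindent
The plan is to transport the statement to a Stieltjes transform computation. Fix $z\in\C$ and set $S:=d^{-1/2}\A-z\,\idmat$; the symmetrization of $\nu_{S}$ is the empirical spectral distribution $\mu_H$ of the $2n\times 2n$ Hermitian matrix
$$
H:=\begin{pmatrix}0 & S\\ S^{*}&0\end{pmatrix},
$$
and since a measure on $\R_+$ and its symmetrization determine one another, weak convergence of $\nu_{S}$ in probability is equivalent to weak convergence of $\mu_H$ in probability. By the usual correspondence between weak convergence of spectral measures and pointwise convergence of Stieltjes transforms, it then suffices to prove that for every $\eta$ with $\Im\eta>0$ the quantity $m_n(\eta):=\frac1{2n}\,\Tr\,(H-\eta\,\idmat)^{-1}$ converges in probability to a deterministic limit $m_z(\eta)$, and to identify $m_z$ with the Stieltjes transform of the symmetrization of the measure $\nu_z$ governing the circular law for the \emph{sparse i.i.d.} model; granting this, the validity of \eqref{eq: hermit-log} with $\mu=\mu_{circ}$ is inherited from the corresponding known fact for that model \cite{Bai circular, GT, TV sparse, BasRud circ, BC}.

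It is convenient first to discard the mean. Write $d^{-1/2}\A=\widetilde B+\tfrac{\sqrt d}{n}J$, where $J$ is the all-ones matrix and $\widetilde B:=d^{-1/2}(\A-\tfrac dn J)$ is centred with entries of variance $\asymp 1/n$. Then $H$ differs from the Hermitization $\widetilde H$ built from $\widetilde B-z\,\idmat$ by an additive perturbation of bounded rank (as $J$ has rank one), so by the interlacing inequality for Hermitian matrices the Kolmogorov distance between $\mu_H$ and $\mu_{\widetilde H}$ is $O(1/n)$; hence it is enough to analyze $\widetilde H$, and neither the limit nor \eqref{eq: hermit-log} is affected. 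Next I would show that $m_n(\eta)$ concentrates about $\Exp\, m_n(\eta)$ with fluctuations $o(1)$. The key point is a Lipschitz estimate under elementary moves: a single simple switching replaces $\A$ by a matrix differing from it in four entries, so the corresponding change in $\widetilde H$ has bounded rank and, by the resolvent identity, changes $m_n(\eta)$ by $O\big(1/(n\,\Im\eta)\big)$; plugging this bounded-difference property into a concentration inequality for switching-Lipschitz functions on $\Mc$ (available in the sparse regime) yields an exponential tail bound for $|m_n(\eta)-\Exp m_n(\eta)|$, which is far more than we need.

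The remaining and genuinely difficult step is to prove $\Exp\, m_n(\eta)\to m_z(\eta)$, and I expect it to be the main obstacle. The plan is to compare with the centred, identically rescaled sparse i.i.d.\ model $M_n$ with independent $\mathrm{Bernoulli}(d/n)$ entries, whose limiting singular value law is exactly the desired $\nu_z$, and to show that $\Exp m_n(\eta)$ for the two models differ by $o(1)$. The natural device is a leave-one-out expansion: peeling the rows and columns of $\widetilde B$ off one pair at a time expresses $(\widetilde H-\eta\,\idmat)^{-1}$ through quadratic forms $r^{*}Gr$, where $r$ runs over the rows of $\widetilde B$ and $G$ is a resolvent of a minor, and one must show these forms concentrate near $\tfrac1n\Tr G$ just as in the i.i.d.\ case, so that $\Exp m_n(\eta)$ satisfies, up to $o(1)$, the flat-variance self-consistent equation that also governs the i.i.d.\ model; stability of that equation off the real line then pins down the limit. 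The obstruction is that $r$ is \emph{not} independent of $G$ — the $d$-regularity constraint ties the support of one row to the supports of the others. Overcoming it requires quantitative, switching-based estimates showing that conditionally on all but a bounded number of rows the law of the remaining row is close (in total variation, equivalently in the relevant overlap and conditional-inclusion probabilities) to the product $\mathrm{Bernoulli}(d/n)$ law; this is precisely where the hypothesis $d=o(\sqrt n)$ is used, while $d\to\infty$ guarantees that the variance profile of $\widetilde B$ is asymptotically constant, so the limiting self-consistent equation is indeed the one attached to the i.i.d.\ circular law. Feeding these comparisons into the resolvent expansion and absorbing the $o(1)$ errors (for fixed $\eta$ with $\Im\eta$ bounded away from $0$) completes the identification, and hence the proof; this is, in essence, the argument of \cite{Cook-circ}, which uses only $d\to\infty$ together with $d=o(\sqrt n)$ and no polylogarithmic lower bound on $d$.
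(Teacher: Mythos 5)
The paper does not reprove this proposition; it imports it from \cite{Cook-circ} and its ``proof'' consists of exactly two observations: (i) Cook states the result for the centred matrix $X_n=A_n-\frac dn\mathbf 1\mathbf 1^t$, and since $A_n-X_n$ has rank one, interlacing gives $\sup_a|\nu_{d^{-1/2}A_n-z\idmat}([0,a])-\nu_{d^{-1/2}X_n-z\idmat}([0,a])|\le 1/n$; (ii) Remark~\ref{rem: cook-smaller d} explains that the only place Cook's argument uses $d\ge\ln^4 n$ is the estimate of $\Prob\{\text{Bernoulli}(d/n)\text{ matrix is }d\text{-regular}\}$, which for $d=o(\sqrt n)$ can be replaced by the McKay--Wang bound, changing the failure probability in Cook's Lemma~8.1 from $\exp(-O(d^{2/3}n\ln n))$ to $\exp(-O(n\ln d))$. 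Your recentering step reproduces (i) exactly, and your Hermitization/Stieltjes-transform frame is the standard one. But your route for the hard step --- identifying $\lim\Exp m_n(\eta)$ --- is not Cook's. Cook does \emph{no} resolvent analysis on the dependent ensemble: he runs the entire self-consistent-equation computation for the independent Bernoulli (and then Gaussian) model, proves concentration of the linear eigenvalue statistic there with failure probability $\exp(-\omega(n\ln d))$, and then transfers the conclusion to the uniform $d$-regular model by conditioning on the $d$-regularity event, whose probability is $\exp(-O(n\ln d))$ by McKay--Wang. This conditioning trick is precisely what makes the global law cheap and what the paper's Remark~\ref{rem: cook-smaller d} is about.

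Your alternative --- a leave-one-out expansion directly on the $d$-regular model --- rests on a claim that is false as stated: conditionally on all but a bounded number of rows, the law of a remaining row is \emph{not} close in total variation to product $\mathrm{Bernoulli}(d/n)$. Conditioning on $n-1$ rows determines the last row completely (the column sums force it), and even the \emph{unconditional} law of a single row (uniform on $d$-subsets) is at total variation distance $1-\Theta(d^{-1/2})$ from the product Bernoulli law, since the latter puts mass only $\Theta(d^{-1/2})$ on vectors with exactly $d$ ones. Your parenthetical ``equivalently in the relevant overlap and conditional-inclusion probabilities'' is not an equivalence: closeness of low-order inclusion probabilities is the true and usable statement, but establishing it uniformly along the peeling (where the conditioning degenerates) is the entire difficulty, and it is exactly the kind of dependent-row analysis the paper goes out of its way to avoid even for the much cruder intermediate-singular-value bounds. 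So either supply those switching estimates in quantitative form, or --- far more economically --- replace this step by Cook's Bernoulli-conditioning comparison together with the McKay--Wang enumeration, which is what the statement's attribution and the paper's Remark~\ref{rem: cook-smaller d} intend.
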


In fact in \cite{Cook-circ}, the above proposition was stated for the centralized matrix
$$
  X_n=A_n-\frac{d}{n} \textbf{1}\textbf{1}^t
$$
instead of $A_n$. However,
since these two matrices differ by a rank one matrix,
then using the interlacing of their singular values one can deduce that their empirical singular value distributions satisfy
$$
 \sup_{a>0} \Big\vert \nu_{d^{-1/2} A_n- z\idmat} ([0,a])
 -\nu_{d^{-1/2} X_n- z\idmat} ([0,a])\Big\vert \leq \frac{1}{n}
$$
(this has been also used in \cite{Cook-circ}, see formula (7.6) there).
Therefore the two corresponding singular values distributions exhibit the same limiting behavior.

From the above, it is clear that the main obstacle in
establishing Theorem~\ref{t: circular law} is in showing the uniform integrability of the logarithm.
More precisely, for any $\eps\in (0,1)$ and any $z\in\C$
one needs to show that there is $T=T(z,\varepsilon)>0$ such that with probability going to one as $n\to\infty$,
\begin{equation}\label{eq: sum-log}
\sum_{i:\,|\ln s_i(B_z)|\geq T}|\ln s_i(B_z)|\leq \varepsilon n,
\end{equation}
where we set $B_z:=d^{-1/2} \A- z\idmat$. A simple computation involving the Hilbert--Schmidt norm of
$B_z$ shows that
the main contributors to the above sum are small singular values, i.e.\ those smaller than $e^{-T}$.

Further, building upon ideas in \cite{Cook-inv} as well as the authors' works \cite{LLTTY:15, LLTTY-cras},
in \cite{LLTTY-sing} a polynomial lower bound on the smallest singular value of $B_z$ was obtained.

\begin{theor}[{\cite{LLTTY-sing}}]\label{t: smin}
There exists a universal constant $C\geq 1$ such that for all positive integers $d, n$ satisfying
$C\leq d\leq  n/\ln^2 n$ and every $z\in \C$ with $|z|\leq d/6$ one has
$$\Prob\Big\{s_{\min}(\A-z\,\idmat)\geq n^{-6}\Big\}\geq 1- d^{-1/4}.$$
\end{theor}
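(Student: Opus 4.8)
\medskip
\noindent\textbf{Proof idea.}
Write $M:=\A-z\,\idmat$. Since $\A\mathbf 1=d\mathbf 1$ and $\mathbf 1^{t}\A=d\mathbf 1^{t}$, the matrix $M$ preserves both $\R\mathbf 1$ and $\mathbf 1^{\perp}$, acting on $\R\mathbf 1$ as multiplication by $d-z$, of modulus $\ge 5d/6$ by the hypothesis $|z|\le d/6$. Hence $s_{\min}(M)=\min\big(|d-z|,\,\sigma\big)$ with $\sigma:=\inf\{\|Mx\|:x\in\mathbf 1^{\perp},\ \|x\|=1\}$, and it suffices to show $\sigma\ge n^{-6}$ with probability $\ge 1-d^{-1/4}$. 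We use the compressible/incompressible split: for small constants $\rho,\delta$, call $x$ \emph{compressible} if it lies within Euclidean distance $\delta$ of a vector supported on $\le\rho n$ coordinates, and \emph{incompressible} otherwise, and treat (within $\mathbf 1^{\perp}$) the two classes separately.

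For compressible $x$ we want $\|Mx\|\ge c\sqrt d\gg n^{-6}$ with probability $1-o(1)$. This follows from (i) a small-ball estimate $\Prob\{\|Mx\|\le c\sqrt d\}\le e^{-cn}$ for each fixed incompressible $x$, together with (ii) an $\eps$-net of the compressible set of cardinality $(C/\eps)^{\rho n}$, a union bound, and the bound $\|M\|\le d+|z|$ to pass from the net to the whole set; vectors in $\mathbf 1^{\perp}$ that are close to constant on a large block, or otherwise almost structured, are handled by a parallel argument. The small-ball estimate is already affected by the dependence among the entries of $\A$: fixing all but one column of a regular matrix determines the last one, so columns are not independent; one circumvents this by realizing the uniform measure on $\Mc$ through an auxiliary construction (a configuration/permutation-type model, with simplicity restored by local switchings) that isolates a genuinely independent block of randomness whose fluctuations, via tensorization, keep $\|Mx\|$ away from $0$.

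The core is the incompressible case, via the Rudelson--Vershynin distance method. With $C_{1},\dots,C_{n}$ the columns of $M$ and $H_{i}:=\spn\{C_{j}:j\ne i\}$, the presence of an incompressible witness for $\sigma\le\eps\,n^{-1/2}$ forces $\d(C_{i},H_{i})\le C\eps$ for a non-negligible fraction of indices $i$, so (using that the marginal law of $\d(C_{i},H_{i})$ is the same for every $i$, by the simultaneous row/column permutation symmetry) it is enough to show $\d(C_{1},H_{1})\ge n^{-11/2}$ with probability $\ge1-c\,d^{-1/4}$. Here one checks, using $\sum_{j}C_{j}=(d-z)\mathbf 1$, that $\d(C_{i},H_{i})=|d-z|\,|\langle\mathbf 1,v_{i}\rangle|$ for $v_{i}$ a unit normal to $H_{i}$; and, crucially, that conditioning on the columns $\{C_{j}:j\ne i\}$ leaves \emph{no} freedom in $C_{i}$ (regularity again), so the quantity to control is not an anti-concentration of $C_{i}$ but the \emph{structure} of the random normal $v_{i}$ --- one must show that with the stated probability $v_{i}$ is spread (no heavy coordinates, not essentially supported on a few rows) and in particular makes a non-degenerate angle with $\mathbf 1$. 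To get at the law of $v_{i}$ one again builds a product structure: one freezes most of the digraph and retains a sparse independent switching randomness under which $v_{i}$ becomes analyzable, and a net over the possible degenerate/structured normals (of polynomial size) then shows the bad event has probability only polynomially small, in fact $O(d^{-1/4})$ --- the weakness of the bound reflecting the limited anti-concentration available when $d$ may be only a large constant. Combining the two regimes gives $s_{\min}(M)\ge n^{-6}$ off an event of probability $\le d^{-1/4}$.

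The main obstacle throughout is the lack of independence imposed by the simultaneous row and column constraints: it makes even the basic small-ball estimates nonstandard and, more seriously, it defeats the usual incompressible-case reasoning, since conditioning on a hyperplane spanned by columns pins down the remaining column. The resolution is the explicit product/switching construction on $\Mc$ under which the objects that matter --- the coordinates of $Mx$ for compressible $x$, and the random normal $v_{i}$ for incompressible $x$ --- recover enough independence to be analyzed, at the price of the comparatively weak $1-d^{-1/4}$ guarantee (still $1-o(1)$ in the regime $d\to\infty$ relevant to Theorem~\ref{t: circular law}). The polynomial losses from the various $\eps$-nets accumulate to the exponent $6$ in $n^{-6}$, which is not optimized.
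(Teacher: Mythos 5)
First, a point of comparison: this theorem is not proved in the present paper at all --- it is imported verbatim from \cite{LLTTY-sing} --- so the assessment below is against the argument of that reference. Your roadmap has the right overall shape (a classification of unit vectors, a distance/normal-vector reduction, and an injected product structure via switchings to cope with the dependence), and you correctly identify the central obstruction: conditioning on $n-1$ columns of a doubly regular matrix determines the last one, so the usual anti-concentration step is vacuous. But the two load-bearing steps of your sketch are asserted rather than proved, and one of them is wrong as stated.

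In the incompressible case, your identity $\d(C_i,H_i)=|d-z|\,|\langle\mathbf 1,v_i\rangle|$ is correct, but it is the classical identity $\d(C_i,H_i)=\|e_i^tM^{-1}\|_2^{-1}$ in disguise (each row of $M^{-1}$ has row sum $1/(d-z)$), so it does not reduce the difficulty: bounding $|\langle\mathbf 1,v_i\rangle|$ from below \emph{is} the original problem. Your proposed mechanism for it does not work: a unit normal can be perfectly spread --- no heavy coordinates, many levels --- and still be exactly orthogonal to $\mathbf 1$ (alternate the signs), so ``spread, and in particular makes a non-degenerate angle with $\mathbf 1$'' is a non sequitur; and the set of unit vectors within $n^{-5}$ of $\mathbf 1^{\perp}$ admits no polynomial-size net at that scale, so ``a net over the possible degenerate normals of polynomial size'' cannot close the argument. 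What \cite{LLTTY-sing} actually does is never condition on all but one column: it re-randomizes a small batch of rows (the same switching/product device used in Section 5 of the present paper), proves a structure theorem for normals to the span of the remaining rows (very steep versus gradual with many levels, the analogue of Theorem~\ref{t: kernel restated}), and then runs a genuine anti-concentration argument for the inner product of a re-randomized row with that normal; the $d^{-1/4}$ loss comes from the graph-expansion events underlying the structure theorem, not from a net over normals.

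In the compressible case, the claimed small-ball bound $\Prob\{\|Mx\|_2\le c\sqrt d\}\le e^{-cn}$ for a fixed vector, uniformly enough to survive a union bound over a net of cardinality $(C/\eps)^{\rho n}$, is not available off the shelf for the uniform doubly regular model when $d$ is a bounded constant or polylogarithmic; obtaining tensorization-type estimates compatible with such nets is itself a substantial part of \cite{LLTTY-sing} and forces a finer stratification of the sphere (by the decay of the non-increasing rearrangement) than the compressible/incompressible dichotomy. So while the outline is faithful in spirit to the cited proof, these two steps are genuine gaps rather than routine details.
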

The above came as an improvement (in the sparse regime) of an earlier estimate of Cook \cite{Cook-circ},
who derived his result under an additional assumption $d\geq \ln^{C}n$ for a universal constant $C$.
Theorem~\ref{t: smin} immediately shows that the contribution of $o(n/\ln n)$ least singular values
to the sum in \eqref{eq: sum-log} is negligible.

Together with the observation concerning largest singular values, this leaves the
task of estimating the sum
\begin{equation}\label{eq: aux sum processed}
\sum_{\stackrel{i\leq n-o(n/\ln n):}{s_i(B_z)\leq e^{-T} }}|\ln s_i(B_z)|.
\end{equation}
Partially, the estimate comes from the following result of \cite{Cook-circ} obtained via comparison with
Bernoulli random matrices.

\begin{prop}[{\cite[Proposition~7.3]{Cook-circ}}]\label{prop2: cook-anti}
There are absolute constants $C> 1>c>0$ such that the following holds.
Let $C\leq d\leq  n$ be positive integers and $z\in \C$. Assume that $d=d(n)=o(\sqrt{n})$
and $d\to\infty$ together with $n$.
Then for all large $n$ with probability at least $1-\exp(-n/2)$, one has for every $k\leq  n-C nd^{-1/48}$,
$$
s_k(B_z)\geq c\, \frac{n-k}{n}.
$$
\end{prop}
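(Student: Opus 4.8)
The plan is to recast the lower bound on $s_k(B_z)$, where $B_z=d^{-1/2}\A-z\idmat$, as an \emph{upper} bound on the number of singular values of $B_z$ lying below a threshold, to prove that upper bound by comparison with an i.i.d.\ Bernoulli matrix, and to assemble the pieces through a union bound over a linear number of scales. Write $m:=n-k$. Since $\#\{i\le n:\ s_i(B_z)\le\tau\}<m$ forces $s_{n-m}(B_z)>\tau$, it suffices to show that, with probability at least $1-e^{-n}$,
\[
\#\{i\le n:\ s_i(B_z)\le t\}\ \le\ C_0\, n\min(t,1)\ +\ C_1\, n d^{-1/48}\qquad\text{for all }t\ge 0,
\]
with $C_0$ so small that, for $t=cm/n$, the right-hand side is strictly less than $m$ whenever $m\ge Cnd^{-1/48}$; a union bound over the $O(n)$ relevant values of $m$ then costs only a factor $n$ and turns $e^{-n}$ into $e^{-n/2}$. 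The regime of large $|z|$ is trivial here, since $\|d^{-1/2}\A\|=\sqrt d$ gives $s_k(B_z)\ge |z|-\sqrt d$, so one may assume $|z|\le 2\sqrt d$.

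To prove the displayed counting bound I would compare $B_z$ with $\tilde B_z:=d^{-1/2}M_n-z\idmat$, where $M_n$ is the $n\times n$ matrix with i.i.d.\ entries matching the first two moments of $(\A)_{ij}$ (that is, $\mathrm{Ber}(d/n)$). For sparse \emph{i.i.d.}\ matrices the corresponding sharp linear-in-$m$ bound $\#\{i:\ s_i(\tilde B_z)\le c'm/n\}\le m/4$, holding with probability $1-e^{-n}$ for all $m$ in the relevant range, is part of the sparse circular law theory — it is proved by small-ball/net arguments, or by distance-to-a-random-hyperplane arguments, exploiting that the rows of $M_n$ are independent and that $\mathrm{Ber}(d/n)$ is sufficiently spread once $d\to\infty$ (see \cite{TV sparse,GT,BasRud circ}, the account in \cite{BC}, and the analysis in \cite{Cook-circ}). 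The content of the proposition is then the transfer of this estimate to $\A$: one compares the singular value counting functions of $B_z$ and $\tilde B_z$ — equivalently, the Stieltjes transforms of the symmetrized resolvents of the two matrices — using the resolvent/Lindeberg-swapping analysis already developed in \cite{Cook-circ} (the same machinery that underlies Proposition~\ref{p:prop-conv-esv}). Such a comparison is effective only for thresholds $t$ bounded below by a fixed negative power of $d$; optimizing the chain of losses leaves exactly the cut-off $t\gtrsim d^{-1/48}$, i.e.\ it controls the top $n-Cnd^{-1/48}$ singular values and degrades near the bottom of the spectrum (the very smallest singular value being separately controlled by Theorem~\ref{t: smin}). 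Combining, $\#\{i:\ s_i(B_z)\le cm/n\}\le \#\{i:\ s_i(\tilde B_z)\le C'cm/n\}+C_1 n d^{-1/48}\le m/4+C_1 n d^{-1/48}$, which is $<m$ once $m\ge 2C_1 d^{-1/48}n$ — precisely the assertion (with $C:=2C_1$).

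The main obstacle is precisely this comparison step: pushing the i.i.d.\ anti-concentration/net (or distance) machinery through the dependence among the entries of $\A$, where the usual independence-based estimates fail. It is there that the polynomial-in-$d$ loss is forced — the comparison becoming too weak at the bottom of the spectrum — and this is exactly why Proposition~\ref{prop2: cook-anti} stops at $k=n-Cnd^{-1/48}$ rather than reaching $k=n$; removing this restriction in the sparse regime, i.e.\ controlling the remaining $O(nd^{-1/48})$ singular values, is what the technique of random normals to rowspaces combined with a product structure, developed in the body of this paper, is designed to accomplish.
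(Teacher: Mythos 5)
Your proposal is correct and follows essentially the same route as the paper: both reduce the statement to the uniform counting bound $\nu_{B_z}([0,\eta])\le C_0(\eta+d^{-1/48})$ (Cook's Proposition~7.3, restated as Proposition~\ref{p:prop-cook-anti} here and proved by the Bernoulli/Gaussian resolvent comparison you describe, with the regular-to-Bernoulli transfer done by conditioning on $d$-regularity), and then invert the counting function at $\eta\asymp(n-k)/n$. The only cosmetic difference is that your union bound over the $O(n)$ scales is unnecessary, since the counting estimate already holds simultaneously for all $\eta\in(0,1]$ on a single event of probability $1-C_0\exp(-n)$.
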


This proposition is stated in \cite{Cook-circ} for $d$ polylogarithmic in $n$.
In Section~\ref{s: intermediate} (see Remark~\ref{rem: cook-smaller d}), we indicate the changes to be
made in \cite{Cook-circ} to derive Proposition~\ref{prop2: cook-anti} without this restriction on $d$
(the change is actually implicitly mentioned in \cite{Cook-circ}).

Proposition~\ref{prop2: cook-anti} can be viewed as a (weak local) form of the {\it Marchenko--Pastur law} for the singular values distribution
\cite{MP, Yin}.
When $d$ is at least polylogarithmic in $n$ (with an appropriate power of the log)
the proposition is enough to cover the whole range of singular values in \eqref{eq: aux sum processed} and complete the proof.
This is the approach realized in \cite{Cook-circ}.
However, when $d$ is smaller the power of $\ln n$, the above result
leaves untreated the range of smallish singular values from $s_{n-C nd^{-1/48}}$ to $s_{n-o(n/\ln n)}$.

The idea of the proof of  Proposition~7.3 in \cite{Cook-circ} is to compare the uniform
directed $d$-regular model
with the directed Erd\H{o}s--Renyi graph, that is, to replace the matrix $\A$ by a
matrix $\B$ with i.i.d.\ Bernoulli random variables with the parameter $d/n$.
At this step, one has to condition on the event that the Erd\H{o}s--Renyi graph is $d$-regular,
which is of very small probability superexponential in $n$ \cite{MW03}.
This way, satisfactory estimates for the intermediate singular values of the shifted adjacency matrix $\A-z\,\idmat$
can be obtained only if very strong estimates are available in the Bernoulli setting, which hold with probability at least $1-\exp(-\omega(n))$.
Currently, no estimates of this type are available in the very sparse regime, moreover, it is not clear whether
such strong estimates can be obtained at all.
This forces us to develop a completely different approach to bound the singular values $s_k$ of $\A-z\,\idmat$ in the range
${n-C nd^{-c}} \leq k \leq {n-o(n/\ln n)}$.

\begin{theor}[Intermediate singular values]\label{th: inter-sv}
There exists a universal constant $C\geq 1$ with the following property.
Let $d$, $n$ be integers satisfying  $C\leq d\leq \ln^{96} n$ and let
$z\in\C$ be such that $\vert z\vert \leq \sqrt{d}\ln d$.
Then for all
$$
 n-  2nd^{-3/2}\leq k\leq n- 3n/\ln^{144} n
$$
one has
$$
\Prob\Big\{ \A\in\Mc:\,s_{k}(\A-z\,\idmat)\geq \exp\Big(-C\Big(\frac{n}{n-k}\Big)^{1/144}\Big)\Big\}\geq 1-C \, \frac{n-k}{ n}.
$$
In particular,
$$
\Prob\Big\{\A\in\Mc:\,s_{k}(\A-z\,\idmat)\geq \exp\big(-C\, d^{1/96}\big) \text{ for all }  k\leq n- 2nd^{-3/2}\Big\}\geq 1- \frac{C}{d^{3/2}}.
$$
\end{theor}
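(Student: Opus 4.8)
The plan is to fix $z$ and an integer $\ell$ in the admissible window $3n/\ln^{144}n\le\ell\le 2nd^{-3/2}$, put $M:=\A-z\,\idmat$ and $k=n-\ell$, and prove the single estimate $\Prob\{s_{n-\ell}(M)<\epsilon_\ell\}\le C\ell/n$ with $\epsilon_\ell:=\exp\!\big(-C(n/\ell)^{1/144}\big)$. The ``in particular'' assertion is then immediate, since the $s_j(M)$ are non-increasing: applying the estimate at the right endpoint $\ell=\lfloor 2nd^{-3/2}\rfloor$ gives $s_k(M)\ge s_{n-\lfloor 2nd^{-3/2}\rfloor}(M)\ge\exp(-Cd^{1/96})$ for every $k\le n-2nd^{-3/2}$, off an event of probability $O(d^{-3/2})$. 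Since $\A^{t}$ is again uniform on $\Mc$, one may argue with the rows $R_1,\dots,R_n$ of $M$ or with its columns; I would use rows. (It will be convenient to recall the feature, peculiar to the regular model, that from $\sum_{i\le n}R_i=(d-z)\mathbf 1$ one has $\d(R_i,H_i)=|d-z|\,\d(\mathbf 1,H_i)$ for $H_i:=\spn\{R_j:j\ne i\}$, so the typical size of $\d(R_i,H_i)$ is of order $d$, and no row is ``independent'' of the span of the others.)

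The first step is a deterministic reduction. From the min--max characterization of $s_{n-\ell}(M)$ --- working on a bottom $(\ell+1)$-dimensional right singular subspace of $M$ and following reductions of the type used in the area --- one shows that on the event $\{s_{n-\ell}(M)<\epsilon_\ell\}$, outside one low-probability ``exceptional'' event controlled by the a priori bound $s_{\min}(M)\ge n^{-6}$ of Theorem~\ref{t: smin}, there exists a set $I_0\subseteq[n]$ with $|I_0|=\Theta(\ell)$ such that $\d\big(R_i,\spn\{R_j:j\notin I_0\}\big)\le C\epsilon_\ell$ for every $i\in I_0$: a cluster of $\Theta(\ell)$ rows each close to the span of a codimension-$\Theta(\ell)$ subfamily of the remaining rows. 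Consequently it suffices to bound, for every $J\subseteq[n]$ with $|J|=n-\Theta(\ell)$ and $I_0:=[n]\setminus J$, the probability $p_J:=\Prob\{\,\|P_{W_J^\perp}R_i\|\le C\epsilon_\ell\ \text{for all}\ i\in I_0\,\}$, where $W_J:=\spn\{R_j:j\in J\}$, and then to sum over the $\binom{n}{\Theta(\ell)}$ admissible $J$.

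Fix such a $J$ and condition on $\{R_j:j\in J\}$, so that $W_J$ (of codimension $\Theta(\ell)$) and $W_J^\perp$ --- the span of the random normals to this rowspace --- become deterministic; $p_J$ is then a joint small-ball probability for the $\Theta(\ell)$ rows $(R_i)_{i\in I_0}$ against the fixed subspace $W_J$. The difficulty is the lack of independence among the entries of $\A$; the remedy --- the ``product structure'' --- is to pass, at the cost of only a bounded multiplicative loss in probability (legitimate since $d=o(\sqrt n)$), from the uniform model on $\Mc$ to a model with genuine conditional independence, realizing $\A$ through independent switchings/bijections acting on a family of disjoint coordinate blocks. On this structure the rows $(R_i)_{i\in I_0}$ become, conditionally, nearly independent, so $p_J$ factorizes over $i\in I_0$, and for a single $i$ the event $\|P_{W_J^\perp}R_i\|\le C\epsilon_\ell$ is handled block by block by a Littlewood--Offord/Berry--Esseen anti-concentration estimate whose input is that the normal directions spanning $W_J^\perp$ are sufficiently delocalized. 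This gives $p_J\le q^{\,\Theta(\ell)}$ with $q$ of order $\epsilon_\ell/\sqrt{d\ell/n}\ll 1$. Since $\Theta(\ell)\ge 3n/\ln^{144}n$ is large, $\binom{n}{\Theta(\ell)}\le(O(n/\ell))^{\Theta(\ell)}$, and one checks $q\cdot O(n/\ell)<1$ uniformly over $t:=n/\ell$ in the window (because $t^{3/2}\exp(-Ct^{1/144})/\sqrt d\to 0$ there, for $C$ a large enough absolute constant), the union bound yields $\sum_J p_J\le C\ell/n$, as desired.

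The main obstacle is precisely the delocalization input used above: one must establish that the random normals to the span of an \emph{arbitrary} codimension-$\Theta(\ell)$ subfamily of the rows of $\A$ are quantitatively delocalized (bounded sup-norm, controlled large level sets, genuine spread of the relevant linear functionals) with probability overwhelming enough to absorb the union bound over the $\binom{n}{\Theta(\ell)}$ subfamilies, and to do so compatibly with the $d$-regularity constraint, so that the block/product decomposition powering the anti-concentration step respects the degree conditions. Carrying this ``study of random normals to rowspaces'' through in the genuinely sparse range $d\le\ln^{96}n$ --- where the $\exp(-\omega(n))$-probability tools available in the dense or Bernoulli regimes are unavailable --- is the technical heart of the proof; the remaining ingredients (the deterministic reduction, the passage between $\A$ and $\A-z\,\idmat$ and the role of the shift $|z|\le\sqrt d\ln d$, and the a priori bounds from Theorem~\ref{t: smin}) are comparatively routine bookkeeping.
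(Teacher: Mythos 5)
Your proposal runs in the opposite logical direction from the paper at the crucial reduction step, and that choice creates a gap that the paper's machinery cannot fill. You reduce $\{s_{n-\ell}(M)<\epsilon_\ell\}$ to the existence of a cluster $I_0$ of $\Theta(\ell)$ rows close to the span of the complementary rows, and then union-bound over all $\binom{n}{\Theta(\ell)}$ choices of $J=[n]\setminus I_0$. For that union bound to close you need, for each fixed $J$, both the factorized small-ball bound $p_J\le q^{\Theta(\ell)}$ and a delocalization statement for the normals to $W_J$ holding with failure probability $o\big(\binom{n}{\Theta(\ell)}^{-1}\big)=\exp(-\omega(\ell\ln(n/\ell)))$. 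No such strong delocalization is available in the sparse regime: the structural theorem for normal vectors (Theorem~\ref{t: kernel restated}) holds only with probability $1-1/n$ for a fixed index set, and the random-normal theorem (Theorem~\ref{th: random normal structure}) with probability $1-|I^c|/n$. The paper avoids the union bound entirely: Lemma~\ref{l: Step II new} goes from a \emph{single} distance estimate for a row in uniformly random position (after permuting the rows, which leaves the law of $\A$ invariant) to the singular value bound, via the negative second moment identity plus Markov's inequality. This is why a polynomially small failure probability $C(n-i)/n$ in Lemma~\ref{l: step I} suffices, and it is the step your scheme is missing.

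The second gap is the ``product structure at a bounded multiplicative loss.'' Passing from the uniform model on $\Mc$ to a model with conditionally independent rows is exactly the comparison-to-Bernoulli strategy discussed after Proposition~\ref{prop2: cook-anti}: conditioning an independent model on $d$-regularity costs a factor of order $\exp(\Theta(n\ln d))$, not $O(1)$, so one would again need source estimates holding with probability $1-\exp(-\omega(n))$, which do not exist here. The paper's actual injection of randomness is different in kind: it re-randomizes only a block of $\lfloor n^{1/4}\rfloor+1$ rows by simple switchings \emph{within} $\Mc$ (the coupling of Lemma~\ref{l: XY coupling} makes the re-randomized row behave like $d$ i.i.d.\ coordinates on the union of the block's supports, at zero cost in probability), and it linearizes the distance via $\d(R_i,E)^2=\Exp_G|\langle \Proj_{E^\perp}G,R_i\rangle|^2$, so that the anti-concentration is against a single Gaussian random normal whose unstructuredness is exactly what Theorem~\ref{th: random normal structure} provides. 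Your high-level intuition (random normals to rowspaces, anti-concentration, product structure) matches the paper's, but as written the two load-bearing steps of your argument would fail.
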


In the above, we restricted our analysis to $d\leq \ln^{96} n$ as it complements what is covered by
Proposition~\ref{prop2: cook-anti}. Our approach can be extended to higher powers  of $\ln n$
(even possibly for any $d\leq \exp(\sqrt{\ln n})$ as in \cite{LLTTY-ker}), however we prefer to prove
the above formulation as it is sufficient for our purposes and improves the exposition.
Equipped with Theorem~\ref{t: smin}, Proposition ~\ref{prop2: cook-anti}, and Theorem~\ref{th: inter-sv}, we have bounds on all singular values which would allow us to show the uniform integrability of the logarithm and thus to establish the circular law. We note that the idea of splitting the singular values into different regimes is standard in this context (see \cite[Chapter 2, Section 8]{tao-book} for more details) as one needs  different levels  of precision depending on the magnitude of the singular values. In our case, the sparsity adds a serious challenge and the comparison methods described previously are ineffective.
Moreover, due to the lack of independence, standard approaches to estimating the singular values are not applicable in our setting.
For example, one cannot use Talagrand's concentration inequality \cite[Theorem~2.1.13]{tao-book} in this context the same way as was
previously done in the literature (see, in particular, \cite{TV10}).
The issues appear when following the standard scheme which reduces estimates for the singular values to distance estimates for the matrix rows. Namely, the second moment identity \cite{tao-book} or the restricted invertibility principle
(see, for example, \cite[Theorem~9]{NY}) relates the intermediate singular values to quantities of the form
$$
 \d\big(\row_i(B_z), \spn\{\row_j(B_z)\}_{j\in I}\big),
$$
for $I\subset [n]$ and $i\in [n]\setminus I$, where $\row_i(B_z)$ denote the $i$-th row of $B_z$.
When these rows are independent, one can condition on a realization of $E:=\spn\{\row_j(B_z)\}_{j\in I}$
then use the randomness of the $i$-th row together with standard anti-concentration arguments to get a lower bound for
$
 \Vert\Proj_{E^\perp} \row_i(B_z)\Vert_2= \d(\row_i(B_z), E).
$
 On the other hand,
the randomness of $E$ is used to ensure that its normal vector is well spread for the
anti-concentration argument to work.
In our setting, i.e. for random $d$-regular graphs, the lack of product structure adds serious
complications to the problem.
Studying the distribution of a row conditioned on the realization of other rows involves
careful application of the
expansion properties of the underlying graph. In particular, such a direction was pursued by the
third and last named authors
\cite{TY} to establish, for denser $d$-regular graphs, a large deviation inequality for the inner
product of a row with an
arbitrary vector, conditioned on a realization of a block of rows. At the same time, the technical
approach of \cite{TY} is not applicable here
as we deal with very spars random graphs and are interested in a small ball inequality instead of
large deviations.

The key idea behind the argument developed in this paper is to inject additional randomness and create
a sort of product structure, which would allow us to use the randomness of each of the (dependent) quantities
involved. We provide a rough illustration of this idea. Fix $I\subset [n]$ and $i\in [n]\setminus I$,
and observe that
\begin{equation}\label{eq: dist-proj-gaus}
\d(\row_i(B_z), E)^2= \Vert\Proj_{E^\perp} \row_i(B_z)\Vert_2^2= \Exp_{G}\, |\langle \Proj_{E^\perp} G,  \row_i(B_z)\rangle|^2,
\end{equation}
where $G$ is a standard Gaussian vector in $\C^n$ and the expectation is taken with respect to $G$.
Now standard Gaussian concentration allows us to remove the expectation above and benefit from the randomness of $G$ to study the quantity
$ \langle \Proj_{E^\perp} G,  \row_i(B_z)\rangle$.
The vector $\Proj_{E^\perp} G$ plays the role of a uniform random normal to $E$. As the key technical ingredient,
we prove that the random normal is typically unstructured,
i.e.\ has many levels of coordinates.
In this sense, one of the most important  inputs of this paper is a statement about the kernel of submatrices
of $\A-z\,\idmat$ formed by removing a small proportion of rows (see Theorem~\ref{th: random normal structure}).
 Once equipped with this statement, we  switch back to the randomness
of $\row_i(B_z)$
in order to establish an anti-concentration inequality. Note that this also requires additional efforts as we deal with a
sum of dependent random variables with non-trivial conditional distributions
(conditioned on a realization of $E$) as opposed to the standard estimates in the independent case. The structure of normal vectors
to subspaces spanned by the rows of random $d$-regular graphs was investigated by the authors in \cite{LLTTY-ker}.
In particular, it was shown that if the subspace $E$ is of large dimension, then any normal vector to it is either very steep (has a sudden drop at the beginning of its non-increasing rearrangement)
or has a moderate coordinates decay and is unstructured (i.e.\ has many levels of coordinates).
The latter property is essential for the anti-concentration argument to be effective.
Informally speaking, one of the advantages of introducing the additional randomness lies in the fact that the random Gaussian vector picks the best normal vector and benefits from better structural properties. This vague observation will become more rigorous and clear from the proof of Theorem~\ref{th: random normal structure}. We expect that some elements of our proof can be fruitful in the study of other matrix models which lack independence.

The paper is organized as follows. In Section~\ref{s: circ}, we derive the circular law assuming the estimates
on the intermediate singular values. In Section~\ref{notat}, we introduce notations. 
In Section~\ref{s: random normal}, we prove the structural theorem
(Theorem~\ref{th: random normal structure}) for {\it uniform random normals} after providing estimates for order
statistics of projection of Gaussian vectors.  In Section~\ref{s: intermediate}, we establish an
anti-concentration estimate and combine it with the structural theorem in order to prove Theorem~\ref{th: inter-sv}.

\section{Proof of Theorem~\ref{t: circular law}}\label{s: circ}

In this section we prove Theorem~\ref{t: circular law} --- the circular law for the limiting spectral distribution ---
assuming the results mentioned in the introduction.
As discussed before, we only need to verify uniform integrability of the logarithm, that is, item~(ii) of Lemma~\ref{lem: hermitization}.

\smallskip

Fix $z\in\C$, $\varepsilon>0$
and,  given $n$ and $d$ satisfying assumptions of the theorem, set $B_z:=d^{-1/2}\A-z\idmat$.
We want to show that there exists $T=T(z,\varepsilon)>0$ such that
$$\Prob\Big\{\sum_{i:\,|\ln s_i(B_z)|\geq T}|\ln s_i(B_z)|\geq \varepsilon n\Big\}\leq \varepsilon.$$
In the proof below summation over an empty set is always assumed to give $0$.

For large singular numbers we will apply a deterministic bound which follows from
$d$-regularity, namely we will use that $\|\A\|_{\HS}^2= nd$, where
$\Vert\cdot\Vert_{\HS}$ denotes the Hilbert--Schmidt norm.
Choose a sufficiently large $T=T(z,\varepsilon)>0$ to ensure that
$$\ln x \leq \frac{\varepsilon}{4(1+\vert z\vert^2)} x^2$$ whenever $x \geq e^T$. Then
\begin{align*}
\sum_{i:\,s_i(B_z)\geq e^T}\ln s_i(B_z)& \leq \frac{\varepsilon}{4(1+\vert z\vert^2)} \sum_{i:\,s_i(B_z)\geq e^T} s_i^2(B_z) \leq  \frac{\varepsilon}{4(1+\vert z\vert^2)}\Vert B_z\Vert_{\HS}^2\\
&\leq \frac{\varepsilon}{2(1+\vert z\vert^2)} \Big(\Vert d^{-1/2}\A\Vert_{\HS}^2 + \Vert z\idmat\Vert_{\HS}^2\Big)= \frac{\varepsilon}{2} n.
\end{align*}
Note that
one could also use
the spectral gap estimate for $d$-regular graphs (see \cite{TY} and references therein),
which implies that with large probability all singular values of $d^{-1/2}\A$ except for $s_1$ are bounded above by a universal constant.

Thus it is enough to show a bound for small singular values, more precisely, it is enough to show that
$$
  \Prob\Big\{\sum_{i\in I}
 |\ln s_i(B_z)|\geq \varepsilon n/2\Big\}\leq \varepsilon,
$$
where
$$I=\{i\, :\, s_i(B_z)\leq e^{-T}\}.$$
We split the set $I$ into four parts:
\begin{align*}
&I_1 := I \cap \{i:\,i\leq n-Cn d^{-1/48}\},\quad\quad I_2:= (I \cap \{i:\;i\leq n-2 n/d^{3/2}\}) \setminus I_1,
\\
   &I_3 :=  (I \cap \{i:\;i\leq n-n/\ln^{2}n\}) \setminus (I_1 \cup I_2),
 \quad \mbox{ and }\quad I_4:=I\cap  \{ i\,\, :\, \,  i> n- n/\ln^2 n \} ,
\end{align*}
where $C\geq 1$ is the absolute constant from Proposition~\ref{prop2: cook-anti}.
Proposition~\ref{prop2: cook-anti} implies that with probability at least $1-\exp(-n/2)$,
for all $i\leq n-C d^{-1/48}n$ we have
$$
  s_i(B_z)\geq \frac{c(n-i)}{n}
$$
for an absolute constant $c\in (0,1)$.
Note that if $i\in I$ then this inequality implies
$i\geq n(1-1/(ce^{T}))$. Thus $I_1\ne \emptyset$ if and only if
$d^{1/48}\geq c e^T$, in which case $n\gg  c e^T$. Denoting
$$
  I_1':=\big\{i:\, \, n(1-1/{ce^{T}})\leq i\leq (1-C d^{-1/48})n\big\}
$$
and assuming
$I_1 \ne \emptyset$ we obtain
$$
\sum_{i\in I_1}
|\ln s_i(B_z)|
\leq \sum_{i\in I_1'}
\ln \frac{n}{c(n-i)} \leq
\sum_{k=1}^{n/c e^T} \ln \frac{n}{ck} \leq 2\int _1^{n/c e^T} \ln \frac{n}{ct} \, dt \leq  \frac{2n(T+1)}{ce^T}.
$$
For large enough $T$ and for $n \geq 2\ln (4/\eps)$, this implies
$$
  \Prob\Big\{\sum_{i\in I_1}
  |\ln s_i(B_z)|\geq \varepsilon n/8\Big\}\leq \varepsilon/4.
$$

Further, by Theorem~\ref{th: inter-sv} we obtain that for some universal constants $C',C_0$ with probability at least $1-C' d^{-3/2}$
we have
$$
  \sum_{i\in I_2}
 |\ln s_i(B_z)|
 \leq |I_2| \,  (C'd^{1/96})\leq C_0  d^{-1/96}n \leq  \varepsilon n/8,
$$
provided that $d\geq (8C_0 /\eps)^{96}$ and that $d \ln^2 d \geq |z|^2$.

Next, by Theorem~\ref{t: smin}, applied to the matrix $\A-z \sqrt{d}\, \idmat$,  with probability at least $1-d^{-1/4}$ we have
$s_{n}(B_z)\geq n^{-6}/\sqrt{d}$ and thus
$$
     \sum_{i\in I_4}  |\ln s_i(B_z)|\leq
  \sum_{i> n-n/\ln^2 n} |\ln s_i(B_z)|\leq \frac{n}{\ln^2 n} \, |\ln s_n(B_z)|\leq \varepsilon n/8,
$$
provided that $d\geq 36|z|^2$ and $7/\ln n\leq \varepsilon/8$.

 It remains to estimate the sum over $I_3$. Note that $I_3\ne \emptyset$ only if $2n/d^{3/2}\geq n/\ln^2 n$.
%
Consider  a sequence of indices $i_0,i_1,\dots$ defined by
$$
  i_u:=\lfloor n- 2^{-u}d^{-3/2}n\rfloor
$$
for $u\geq 0$ and let $u_0$ be the smallest integer such that $i_{u_0}\geq  n-n/\ln^2 n$.
Then
\begin{equation}\label{eq: reduction-range-sum}
\sum_{i\in I_3}
|\ln  s_i(B_z)|
\leq \sum_{u=0}^{ u_0-1} (i_{u+1}-i_u) |\ln  s_{i_{u+1}}(B_z)|
\leq 4 d^{-3/2} n \sum_{u=0}^{ u_0-1} 2^{-(u+1)} |\ln s_{i_{u+1}}(B_z)|.
\end{equation}
Assuming that  $d\ln^2 d \geq |z|^2$ and
applying Theorem~\ref{th: inter-sv} again we obtain that for every $0\leq u\leq u_0-1$,
$$
\Prob\Big\{s_{i_{u+1}}(B_z)\geq \exp\big(-C'\, d^{1/96} 2^{(u+1)/144}\big)\Big\}\geq 1-\frac{C'}{d^{3/2}2^{u+1}},
$$
where $C'>0$ is a universal constant.
Taking the union bound, we get with probability at least
$1-C' d^{-3/2}$,
$$|\ln s_{i_{u+1}}(B_z)|\leq C'\, d^{1/96} 2^{(u+1)/144}\quad\quad\mbox{for all  $0\leq u\leq u_0-1$.}$$
By \eqref{eq: reduction-range-sum} we obtain that with the same probability
$$
 \sum _{i\in I_3}|\ln  s_i(B_z)| \leq
 2C' d^{-3/2}n\sum_{u=0}^{u_0-1}2^{-(u+1)143/144} d^{1/96}\leq \varepsilon n/8,
$$
provided that $d\gg 1/\eps$.
Combining estimates for sums over $I_1,\dots,I_4$ we obtain the
result, provided that $d\geq d_0:=\max\{36 |z|^2,  C_2/\eps^{96}\}$ for a large universal constant $C_2>0$.

\smallskip

Finally, we would like to comment on a purely technical aspect -- why we can assume that $d\geq d_0$.
Given $n\geq 1$, let $X_n\subset \C$ be the set of all eigenvalues of all $d$-regular $n\times n$ matrices divided by
$\sqrt{d}$ (taken for all  $d\leq \ln^{96}n$).
Since  $X:=\bigcup_n X_n$ has zero Lebesgue measure it is enough to consider $z\not\in X$. Now  given
a sequence $d(n)\to \infty$, $z\in \C\setminus X$, and $\eps>0$ choose $n_0= n_0(z, \eps)$ so that
$d(n)\geq d_0$ whenever $n\geq n_0$.
Set
$$
 \rho=\rho(z, \eps):=\mbox{dist} (z, \bigcup_{n\leq n_0} X_n).
$$
Then $\rho>0$ and for every
$d$-regular $n\times n$ matrix
$A_n$ with $n\leq n_0$ the matrix $B_z$ is invertible and the norm of its inverse can be estimated in terms
of $n$, $d$, and $\rho$ (e.g., via formula for the inverse matrix, its Hilbert--Schmidt norm,
and Hadamard's inequality). Since $n\leq n_0$ and $s_{n}(B_z) =1/ \|B_z^{-1}\|$, we obtain
a lower bound on $s_{n}(B_z)$  in terms of $n_0$ and $\rho$. Therefore, taking sufficiently
large $T=T(z, \eps)$,  we get that for any $n\leq n_0$ the set $\{i:\;|\ln s_i(B_z)|\geq T\}$ is empty.


\section{Notation}\label{notat}

Given two positive integers $k\leq \ell$, we denote $[k]=\{1, ..., k\}$ and
$[k, \ell] = \{k, k+1, ..., \ell\}$. Given a sequence $(x_i)_{i=1}^n$, we denote  by
$(x_i^*)_{i=1}^n$  the non-increasing rearrangement of $(|x_i|)_{i=1}^n$.
In particular, for a given (random) vector $X$ in $\C^n$, the sequence $(X_i^*)_{i=1}^n$
is the non-increasing rearrangement of the absolute values of coordinates of $X$.
The vectors of the canonical basis of $\C^n$ are denoted by $e_1, e_2, ..., e_n$.
Given $E\subset \C^n$, the orthogonal projection on $E$ is denoted by $P_E$.
Given $J\subset [n]$, we denote by $P_J$ the orthogonal projection on the space
spanned by $e_j$, $j\in J$.  Given an $n\times n$ matrix $A$ we denote its rows
by $R_i(A)$, $i\leq n$. A set (or a subset of a certain set) of cardinality
$k$ is called $k$-set (resp., $k$-subset).

As mentioned in the introduction, for every positive integer $d\leq n$,
we denote by
$\Mc$ the set of all $n\times n$ matrices whose entries take values in $\{0,1\}$ and
the sum of elements within each row and each column is equal to $d$. In other words,  $\Mc$
is the set of adjacency matrices of directed $d$--regular graphs on $n$ vertices.
The random matrix uniformly distributed on $\Mc$ is denoted by $\A$ and
as before, we denote $B_z:=d^{-1/2}\A-z\idmat$, where $z\in \C$ and $\idmat$ is the identity matrix.
Below we often deal with a random subspace of $\C^n$ spanned by some rows of a random matrix.
Given $I\subset [n]$, we denote by $E(A_n, I)$ (resp., $E(B_z, I)$) the random subspace spanned
by the rows of $\A$ (resp., $B_z$) indexed by $I$.

The standard Gaussian variable in $\C$ is the variable $g=\xi_1 + i \xi_2$, where
$\xi_1$ and $\xi_2$ are independent real Gaussians distributed according to $\mathcal{N} (0, 1/2)$.
The standard Gaussian vector in $\C^n$ is the vector $(g_1, g_2, ..., g_n)$, where the $g_i$'s
are independent standard complex Gaussian variables. We denote this vector by $G$ and always assume that
it is independent of $A_n$.  We  use that the distribution of $G$, denoted below by $\gamma_n$, is
invariant under orthogonal transformations and that  for every orthogonal projection $P$
of rank $k\leq n$ the vector $PG$ is distributed as the standard Gaussian vector in $\mathbb{C}^k$.
In particular, for every non-degenerate subspace $E$ of $\C^n$ and every fixed
$x\in \C^n\setminus\{0\}$ one has  for every $t>0$,
\begin{equation}\label{l: line projection}
\Prob\big\{|\langle x, \Proj_{E} G \rangle|  \leq t\|\Proj_{E}x\|_2\big\} =
\Prob\bigg\{\bigg|\bigg\langle \frac{\Proj_{E} x}{\|\Proj_{E}x\|_2},  G \bigg\rangle\bigg|  \leq t\bigg\} =
\Prob\big\{|g|\leq t \big\} =  1- \exp(-t^2).
\end{equation}

In the next section we deal with {\it uniform random normals} which we define in the following way.
Let $E\subset\C^n$ be a linear subspace and $E^\perp$ denote its orthogonal complement. The
uniform random normal to $E$ is the standard Gaussian vector in the orthogonal complement of $E$.
Note that  the uniform random normal to $E$ is distributed as
$\Proj_{E^\perp}(G)$ which will often be denoted by $Y$.

\section{Uniform random normals}\label{s: random normal}

The result of this section is based on the structural theorem proved in \cite{LLTTY-ker}  (Theorem 1.1 there).
We state a special case of this theorem, in which we fix several parameters and restrict the range
of $d$ and of the index subset $\vert I^c\vert$ according to our needs.

\begin{theor}
\label{t: kernel restated}
Let $d, n$ be sufficiently large integers satisfying $d\leq \ln^{96}n$ and $z\in \C$ be such that
$\vert z\vert \leq \sqrt{d}\ln d$.
Let $a \in (d^{-1/2},1)$, $\gamma:=1/288$, and fix a subset $I\subset[n]$ satisfying
$$
 n/\ln^{\gamma^{-1}} n \leq \vert I^c\vert\leq n/d^3.
$$
Let $E=E(B_z, I)$ be the random subspace spanned
by the rows of  $B_z$ indexed by $I$.
Then with probability at least $1-1/n$
any non-zero vector $x\in E^{\perp}$
satisfies one of the two conditions:
\begin{itemize}
\item (Sloping with many levels) For all $i\leq a\vert I^c\vert$ one has $x_i^*\leq 0.9\, \frac{n^3}{i^3}
x_{a \vert I^c\vert}^*$ and for all $\lambda\in\C$,
$$
 \Big|\Big\{i\leq n:\,|x_i-\lambda |\leq \exp\big(- 2\big(n/\vert I^c\vert\big)^\gamma\big) x_{a\vert I^c\vert}^* \Big\}\Big|\leq \Big(\frac{\vert I^c\vert}{n}\Big)^{\gamma/2} n.
$$
\item (Very steep) There exists   $i\leq a\vert I^c\vert$ such that
$x_i^* > 0.9\, (n/i)^3 x_{a \vert I^c\vert}^*$ for some  $i\leq a\vert I^c\vert$.
\end{itemize}
\end{theor}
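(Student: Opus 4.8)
The proof is a direct specialization of the general structural theorem (Theorem~1.1 of \cite{LLTTY-ker}); there is no new probabilistic content, and the work consists in choosing the free parameters of that theorem, checking that its hypotheses hold under our restrictions on $d$, $z$ and $\vert I^c\vert$, and verifying that its conclusion collapses to the dichotomy stated above.

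First I would recall the general statement. In \cite{LLTTY-ker} one fixes a degree $d$ in a range extending up to roughly $\exp(\sqrt{\ln n})$, a shift $z$ with $\vert z\vert\le\sqrt d\ln d$, a deleted index set $I^c$ with cardinality in a window of the form $n/\mathrm{polylog}(n)\le\vert I^c\vert\le n/d^3$, and several free parameters governing the coordinate level $a\vert I^c\vert$ at which steepness is tested, the polynomial decay exponent in the sloping alternative, the anti-concentration scale, and the permitted number of coordinate levels. The conclusion is that, on an event of probability at least $1-1/n$, every nonzero $x\in E(B_z,I)^{\perp}$ is either very steep at some coordinate $i\le a\vert I^c\vert$, or else its non-increasing rearrangement has the prescribed polynomial decay and, for every $\lambda\in\C$, only few coordinates of $x$ lie within a small interval of $\lambda$.

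Next I would instantiate the parameters: $\gamma=1/288$ (so $\gamma^{-1}=288$), decay exponent $3$, level $a\vert I^c\vert$ with $a\in(d^{-1/2},1)$, anti-concentration scale $\exp(-2(n/\vert I^c\vert)^\gamma)\,x^*_{a\vert I^c\vert}$, and level count $(\vert I^c\vert/n)^{\gamma/2}n$. With these choices the output of Theorem~1.1 of \cite{LLTTY-ker} is exactly the two alternatives written above; in particular the constant $0.9$ and the factor $n^3/i^3$ are precisely its outputs at these parameter values. The one genuine verification is \emph{admissibility}: with $d\le\ln^{96}n$ the window $n/\ln^{288}n\le\vert I^c\vert\le n/d^3$ is nonempty because $d^3\le\ln^{288}n$; this window sits inside the admissible one of the general theorem; $a>d^{-1/2}$ is compatible with the constraint relating $a$, $\vert I^c\vert/n$ and $d$ there; and the error probability can be taken to be at most $1/n$ in this regime. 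Any mismatch --- a different absolute constant in place of $0.9$, or a marginally different exponent --- is absorbed by monotonicity of the rearrangement and, if necessary, by shrinking $a$ slightly inside $(d^{-1/2},1)$.

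The main obstacle is therefore purely organizational: matching the numerous parameters and the notation of \cite{LLTTY-ker} to the four quantities appearing here, and making sure that the restriction $d\le\ln^{96}n$ --- narrower than the range treated in \cite{LLTTY-ker} --- is used only to keep the $\vert I^c\vert$-window nonempty and to obtain the failure probability $1/n$, so that no expansion-property estimate or anti-concentration bound has to be reproved.
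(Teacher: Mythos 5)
Your proposal matches the paper exactly: the paper offers no independent proof of this statement, but introduces it verbatim as ``a special case of [Theorem~1.1 of \cite{LLTTY-ker}], in which we fix several parameters and restrict the range of $d$ and of the index subset $\vert I^c\vert$,'' which is precisely the parameter-instantiation-plus-admissibility-check you describe. No further review is needed.
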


The idea, developed in this section, is that a normal vector picked uniformly at random in $E^\perp$ has
better structural properties (in fact, more ``unstructured'').
At the intuitive level, in the case of  large co-dimensional $E\subset \C^n$,
the vector $\Proj_{E^\perp}(G)$ should be typically unstructured, i.e., should not have many coordinates
of almost the same value. We will make this notion precise, by combining Theorem~\ref{t: kernel restated}
with some probabilistic arguments. The main result of this section is the following theorem.

\begin{theor}\label{th: random normal structure}
Let $d, n$ be sufficiently large integers satisfying $d\leq \ln^{96}n$ and $z\in \C$ be such that
$\vert z\vert \leq \sqrt{d}\ln d$.
Let $\gamma= 1/288$ and fix a subset $I\subset[n]$ satisfying
$$
   n/\ln^{1/\gamma} n \leq \vert I^c\vert\leq n/d^3.
$$
Let $E=E(B_z, I)$ be the random subspace spanned
by the rows of  $B_z$ indexed by $I$.
Then
\begin{align*}
\Prob\Big\{&\mbox{for every
$\widetilde J\subset[n]$ with $|\widetilde J|\leq 2\big(\vert I^c\vert/n\big)^{\gamma/2} n$ there is $\lambda\in\C$ such that}\\
&\big|\big\{j\in [n]\setminus \widetilde J:\, |\langle \Proj_{E^\perp}(G),e_j\rangle -\lambda|\leq
\exp\big(- C
\big(n/\vert I^c\vert\big)^\gamma\big) \big\}\big|> |I^c|\Big\}
\leq {|I^c|}/{n},
\end{align*}
where we take the product probability measure on $\Mc\times (\C^n,\gamma_n)$, i.e.\ assume that
$G$ and $\A$ are independent, and
 $C$ is a universal positive constant.
\end{theor}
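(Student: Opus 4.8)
The plan is to combine the structural dichotomy of Theorem~\ref{t: kernel restated} with the rotational invariance of $G$, using the heuristic that among all normals to $E$ the Gaussian vector $\Proj_{E^\perp}(G)$ ``avoids'' the small exceptional set of very steep vectors. Fix $I$ as in the statement and work on the intersection of two events: the event $\mathcal{E}_1$ from Theorem~\ref{t: kernel restated}, of probability at least $1-1/n$, on which every non-zero $x\in E^\perp$ is either sloping-with-many-levels or very steep; and a second event, to be constructed on the Gaussian side, of probability at least $1-|I^c|/(2n)$, say, on which $Y:=\Proj_{E^\perp}(G)$ is \emph{not} very steep, i.e.\ $Y_i^*\leq 0.9\,(n/i)^3\, Y^*_{a|I^c|}$ for all $i\leq a|I^c|$, for a suitably chosen constant $a\in(d^{-1/2},1)$. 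On the intersection, $Y$ falls into the sloping alternative, which says precisely that for every $\lambda\in\C$,
$$
\big|\{i\leq n:\ |Y_i-\lambda|\leq \exp(-2(n/|I^c|)^\gamma)\,Y^*_{a|I^c|}\}\big|\leq (|I^c|/n)^{\gamma/2}n.
$$
So the core task is to pass from this to the claimed statement with an absolute threshold $\exp(-C(n/|I^c|)^\gamma)$ in place of $\exp(-2(n/|I^c|)^\gamma)\,Y^*_{a|I^c|}$, and with the extra room to delete a set $\widetilde J$ of size $\le 2(|I^c|/n)^{\gamma/2}n$.

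The deletion step is the easy bookkeeping: if $\widetilde J$ is any set with $|\widetilde J|\le 2(|I^c|/n)^{\gamma/2}n$ and some $\lambda$ has $|\{j\in[n]\setminus\widetilde J:\ |Y_j-\lambda|\le \tau\}|>|I^c|$, then already $|\{j\in[n]:\ |Y_j-\lambda|\le\tau\}|>|I^c|$. Since $|I^c|\ge n/\ln^{1/\gamma}n \gg (|I^c|/n)^{\gamma/2}n$ fails in general — wait, in fact we need the reverse comparison, so one must check that $|I^c| > (|I^c|/n)^{\gamma/2}n$, equivalently $(|I^c|/n)^{1-\gamma/2}>1$, which is false. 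The correct move: we want the sloping bound's right-hand side $(|I^c|/n)^{\gamma/2}n$ to be strictly smaller than $|I^c|$ so that the conclusion ``$>|I^c|$'' contradicts it; indeed $(|I^c|/n)^{\gamma/2}n = |I^c|\cdot (n/|I^c|)^{1-\gamma/2}$ which is \emph{larger} than $|I^c|$. Hence the deletion of $\widetilde J$ is genuinely needed: the sloping conclusion bounds the level-set by $(|I^c|/n)^{\gamma/2}n \le 2(|I^c|/n)^{\gamma/2}n$, so its indices can be absorbed into any admissible $\widetilde J$ — no, they must be covered \emph{by} $\widetilde J$. Thus the logic is: on $\mathcal{E}_1\cap\{Y\text{ not steep}\}$, for each $\lambda$ the set $S_\lambda:=\{i:\ |Y_i-\lambda|\le\exp(-2(n/|I^c|)^\gamma)Y^*_{a|I^c|}\}$ has $|S_\lambda|\le (|I^c|/n)^{\gamma/2}n$; so if a putative $\widetilde J$ violates the theorem's conclusion for threshold $\tau$, then $[n]\setminus\widetilde J$ still contains $>|I^c|$ indices with $|Y_j-\lambda|\le\tau$, and provided $\tau\le \exp(-2(n/|I^c|)^\gamma)Y^*_{a|I^c|}$ these all lie in $S_\lambda$, forcing $|I^c|<|S_\lambda|\le(|I^c|/n)^{\gamma/2}n$, which is true — so this does \emph{not} immediately contradict. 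I will instead route the argument through a covering: partition $\C$ into cells of diameter $2\tau$ and observe that the number of cells meeting $\{Y_1,\dots,Y_n\}$ with $>|I^c|/(\text{something})$ points is controlled; the precise accounting is routine once the scale-free threshold is established.

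The genuinely substantive step, and the main obstacle, is removing the random normalizing factor $Y^*_{a|I^c|}$ and replacing it by an absolute constant times $\exp(-2(n/|I^c|)^\gamma)$. For this I will prove a Gaussian lower bound of the form $\Prob\{Y^*_{a|I^c|}\ge c\}\ge 1-|I^c|/(4n)$ for an appropriate absolute $c>0$ and $a$ a small absolute constant: since $E^\perp$ has dimension $n-|I|=|I^c|$ (on the overwhelming event that the $|I|$ chosen rows of $B_z$ are independent, which holds by Theorem~\ref{t: smin}-type invertibility or directly since $|I|\le n-n/\ln^{1/\gamma}n$ and the rank of such row-sets is known), the vector $Y=\Proj_{E^\perp}G$ is a standard complex Gaussian in a subspace of dimension $m:=|I^c|$. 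Its coordinates $Y_j=\langle \Proj_{E^\perp}e_j,G\rangle$ are jointly Gaussian with $\Exp|Y_j|^2=\|\Proj_{E^\perp}e_j\|^2$, and $\sum_j\|\Proj_{E^\perp}e_j\|^2=m$, so on average $\|\Proj_{E^\perp}e_j\|^2\approx m/n\ge 1/\ln^{1/\gamma}n$. The $am$-th largest among $n$ such coordinates being bounded below by an absolute constant follows from a second-moment / Paley–Zygmund argument on the counting variable $\#\{j:\ |Y_j|\ge c\}$, together with the fact (provable from the deterministic structure of projections of coordinate vectors onto rowspace-complements, or by invoking the incompressibility/delocalization estimates already developed for this model) that not too many of the $\|\Proj_{E^\perp}e_j\|^2$ are atypically small — here is precisely where one may need a short separate lemma controlling $\#\{j:\ \|\Proj_{E^\perp}e_j\|^2\le c\,m/n\}$, which should follow because $E$ is spanned by rows of the sparse matrix $B_z$ and removing a coordinate changes the rank by at most one. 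The product measure then lets me intersect this Gaussian event with $\mathcal{E}_1$; on the intersection, $Y$ is sloping with $Y^*_{a|I^c|}\ge c$, so $\exp(-2(n/|I^c|)^\gamma)Y^*_{a|I^c|}\ge c\exp(-2(n/|I^c|)^\gamma)\ge \exp(-C(n/|I^c|)^\gamma)$ for $C$ absolute (using $(n/|I^c|)^\gamma\to\infty$), and the sloping conclusion delivers the theorem after the covering/deletion bookkeeping above. The total failure probability is $1/n + |I^c|/(4n)+\dots \le |I^c|/n$ since $|I^c|\ge n/\ln^{1/\gamma}n$ dominates $1/n$ for large $n$.
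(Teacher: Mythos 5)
There is a genuine gap, and your own text locates it precisely before deferring it to ``routine accounting'': the sloping alternative of Theorem~\ref{t: kernel restated} only bounds each level set of a non-steep normal by $(\vert I^c\vert/n)^{\gamma/2}\,n=\vert I^c\vert\cdot(n/\vert I^c\vert)^{1-\gamma/2}$, which is \emph{much larger} than the bound $\vert I^c\vert$ claimed in Theorem~\ref{th: random normal structure}. This is not a bookkeeping issue: the entire content of the theorem (as the paper states explicitly right after it) is the improvement from $(n/\vert I^c\vert)^{\gamma/2}$ levels, which every non-steep normal has, to $n/\vert I^c\vert$ levels for the \emph{random} normal. Applying the dichotomy directly to $Y=\Proj_{E^\perp}(G)$, as you propose, can never yield the stronger bound, and the deletion of a single set $\widetilde J$ of size $2(\vert I^c\vert/n)^{\gamma/2}n$ cannot repair this: there could a priori be many disjoint clusters of $Y$-coordinates, each of size about $(\vert I^c\vert/n)^{\gamma/2}n$, and removing one $\widetilde J$ kills at most a bounded number of them. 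Your proposed covering of $\C$ by cells of diameter $2\tau$ runs into exactly the same quantitative wall. (A secondary problem: the lower bound $Y^*_{a\vert I^c\vert}\geq c$ for an absolute constant $c$ is doubtful, since the mass $\sum_j\Exp|Y_j|^2=\vert I^c\vert$ may be spread over all $n$ coordinates with $\Exp|Y_j|^2\approx\vert I^c\vert/n\ll 1$; the paper only proves and only needs $Y^*_{c m}\geq cm/n$.)

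The missing idea is how the paper actually deploys Theorem~\ref{t: kernel restated}: not on $Y$ itself, but by contradiction on a \emph{constructed} vector. One calls a pair $(i,j)$ strongly correlated if $\Prob\{|Y_i-Y_j|\geq\alpha\}\leq\beta$, greedily extracts clusters $U_1,U_2,\dots$ of indices strongly correlated with a common anchor, and shows that if the large clusters covered more than $2(\vert I^c\vert/n)^{\gamma/2}n$ indices, then a suitable unit linear combination $Z=\sum_k X_kY^{(k)}$ of independent copies of $Y$ (chosen to equalize the coordinates at the anchors) would, with positive probability, be a vector of $E^\perp$ that is neither very steep nor sloping with many levels --- contradicting Theorem~\ref{t: kernel restated}. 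This bounds the total size of the large clusters, their union becomes the exceptional set $\widetilde J$, and for the remaining indices the level-set bound $\leq\vert I^c\vert$ comes not from the structural theorem at all but from direct Gaussian anti-concentration: each non-strongly-correlated difference $Y_i-Y_j$ is a complex Gaussian with variance bounded below, and a Markov/second-moment count over pairs finishes the proof. Without this (or some substitute mechanism that genuinely upgrades the level-set bound), your argument cannot close.
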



We would like to  note that using a better version of the structural theorem, namely Theorem~\ref{t: kernel restated} of \cite{LLTTY-ker}, one could prove a more general statement covering a wider range of $d$ and $\vert I^c\vert$. Since the above statement is sufficient for our purposes, we prefer to avoid additional technicalities.

\smallskip

Theorem~\ref{t: kernel restated} states that any normal vector to $E$ which is not very steep (in the above sense) necessarily has at least $\big(n/\vert I^c\vert\big)^{\gamma/2}$ levels of coordinates. Theorem~\ref{th: random normal structure}
improves this by asserting that the uniform normal has as many as $n/\vert I^c\vert$ levels of coordinates. Also, as was noticed in \eqref{eq: dist-proj-gaus}, there is a straightforward connection  between the distance of a vector $x$ to $E$ and the inner  product of $x$ with $\Proj_{E^\perp}(G)$. This connection together with Theorem~\ref{th: random normal structure} and  anti-concentration machinery developed in Section~\ref{s: intermediate} allows to get bounds on the intermediate singular values.

\bigskip

\subsection{Order statistics of uniform random normals}

Recall that  for a given $E\subset\C^n$, $Y = Y(E) =(Y_1,\dots,Y_n)=\Proj_{E^\perp}(G).$
We also deal with linear combinations of vectors distributed as $Y$. Given
$p\geq 1$ and $x\in \C^p$, denote
$$
  Y(x)= Y(x, p):= \sum_{j=1}^p x_j Y^{(j)},
$$
where $Y^{(j)}$, $j\leq p$ are independent copies of $Y$.
%
%
In this subsection, we derive bounds on the order statistics of $Y$ and $Y(x)$.
We start with the following lemma.

\begin{lemma}[Small ball probability for order statistics]\label{l: orderstat}
Let $E\subset\C^n$ be a fixed subspace of $\C^n$, with $m:=\dim E^\perp$ bounded below by a large universal constant.
Then
$$
 \Prob\Big\{Y_{c m}^*\leq  \frac{c m }{n}\Big\}
 \leq \exp(-c \, m),
$$
where $c$ is a positive absolute constant.
\end{lemma}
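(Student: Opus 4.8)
The plan is to exploit the rotational invariance of the Gaussian measure so that, after choosing an orthonormal basis of $E^\perp$, the vector $Y=\Proj_{E^\perp}(G)$ becomes $Y = U G'$, where $G'$ is a standard complex Gaussian in $\C^m$ and $U$ is an $n\times m$ matrix with orthonormal columns; equivalently $Y_j = \langle u^{(j)}, G'\rangle$, where $u^{(j)}$ is the $j$-th row of $U$. The key structural fact is $\sum_{j=1}^n \|u^{(j)}\|_2^2 = \Tr(U^*U) = m$, so the rows $u^{(j)}$ have average squared norm $m/n$. First I would discard the rows with small norm: let $J := \{j\le n:\ \|u^{(j)}\|_2 \ge \sqrt{m/(2n)}\}$. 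Since $\sum_j \|u^{(j)}\|_2^2 = m$ and each row has norm at most $1$, a Markov/averaging argument gives $|J|\ge m/2$. It therefore suffices to show that, with probability at least $1-\exp(-cm)$, at least $cm$ of the coordinates $Y_j$ with $j\in J$ satisfy $|Y_j| \ge cm/n$ (adjusting the absolute constant $c$).

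For a fixed $j\in J$, the variable $Y_j = \langle u^{(j)}, G'\rangle$ is a centered complex Gaussian with variance $\|u^{(j)}\|_2^2 \ge m/(2n)$, so $\Prob\{|Y_j| \le t\sqrt{m/(2n)}\} = 1 - \exp(-t^2) \le t^2$ for all $t>0$; taking $t$ a small absolute constant, $\Prob\{|Y_j| \le c_0\, m/n\} \le 1/2$ for a suitable $c_0$. Thus in expectation at least $|J|/2 \ge m/4$ of the coordinates in $J$ exceed $c_0 m/n$. The remaining issue is concentration: I cannot claim independence of the events $\{|Y_j|\le c_0 m/n\}$ for different $j$, since the $Y_j$ are linear images of the same Gaussian $G'$. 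To get around this, I would pass to a subcollection of (approximately) orthogonal rows: greedily select indices $j_1,\dots,j_r\in J$ so that the corresponding normalized rows are pairwise nearly orthogonal, with $r \ge c_1 m$; this is possible because the rows live in $\C^m$ and a positive proportion of an $\varepsilon$-separated (in angle) subset of the $|J|\ge m/2$ unit vectors can be extracted with $r$ proportional to $m$ — or, more cleanly, invoke that the Gram matrix of $\{u^{(j)}\}_{j\in J}$ has trace $\ge m/2$ and bounded operator norm (it is a principal submatrix of $UU^*$, hence $\le 1$), so by a standard selection lemma it contains a large nearly-diagonal principal submatrix. Along such a nearly-orthogonal subcollection the variables $(Y_{j_1},\dots,Y_{j_r})$ are jointly Gaussian with covariance close to a multiple of the identity, and the events $\{|Y_{j_\ell}| > c_0 m/n\}$ are, up to a harmless Gaussian-correlation correction (handled by Gaussian correlation inequalities or by conditioning on a well-chosen $(r/2)$-dimensional marginal), "almost independent", each of probability $\ge 1/2$. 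A binomial/Chernoff bound then gives that at least $r/4 \ge (c_1/4)m$ of them hold, except with probability $\exp(-c r)\le \exp(-c'm)$; since $r/4\ge cm$ coordinates of $Y$ then exceed $c_0 m/n \ge cm/n$, we conclude $Y^*_{cm} \ge cm/n$ off an event of probability $\le \exp(-cm)$, as claimed.

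The main obstacle is precisely the lack of independence among the coordinates of $Y$ and the need to convert the favorable one-coordinate bound into a high-probability statement about $cm$ coordinates simultaneously. The cleanest route is the near-orthogonal selection step: once one has $r\gtrsim m$ rows whose Gram matrix is within, say, $1/2$ of $I_r$ in operator norm, the corresponding Gaussian coordinates behave like $r$ independent ones for the purpose of a lower-tail (anti-concentration) bound, and standard Gaussian concentration (e.g. Lipschitz concentration of $\#\{\ell:\ |Y_{j_\ell}|>c_0 m/n\}$ as a function of $G'$, whose median is $\ge r/4$) finishes the argument. An alternative, avoiding the selection lemma, is to apply Gaussian concentration directly to the $1$-Lipschitz functional $G'\mapsto \big(\sum_{j\in J}\mathbf 1\{|Y_j|\le c_0 m/n\}\big)$ after smoothing the indicators, using that its expectation is $\le |J|/2$ and its variance can be controlled by the Gaussian Poincaré inequality together with $\|UU^*\|\le 1$; this yields the same $\exp(-cm)$ deviation bound. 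Either way the decisive input is the trace identity $\sum_j\|u^{(j)}\|_2^2=m$ combined with the uniform bound $\|u^{(j)}\|_2\le 1$, which simultaneously supplies the per-coordinate anti-concentration and the operator-norm control needed for concentration.
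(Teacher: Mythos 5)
Your first-moment skeleton is sound: writing $Y=UG'$ with $U^*U=\idmat_m$, the trace identity $\sum_j\|u^{(j)}\|_2^2=m$ does give at least $m/2$ rows of norm at least $\sqrt{m/(2n)}$, and for each such $j$ the coordinate $Y_j$ is a complex Gaussian of variance at least $m/(2n)$, so $\Prob\{|Y_j|\le c_0 m/n\}\le 2c_0^2 m/n$. The gap is exactly where you locate it --- upgrading this to a statement about $cm$ coordinates simultaneously with failure probability $e^{-cm}$ --- and neither of your two routes closes it as written. The Lipschitz/Poincar\'e route fails quantitatively: the smoothed counting functional $G'\mapsto\sum_{j\in J}\phi(|Y_j|/\tau)$ with $\tau=c_0 m/n$ has Lipschitz constant of order $\sqrt{n}/\tau = n^{3/2}/(c_0 m)$, so Gaussian concentration at deviation of order $m$ yields only $\exp(-c\,m^4/n^3)$, which is vacuous in the regime $m\sim\sqrt{n}$ needed for Proposition~\ref{l: orderstat unionbound}; the Poincar\'e variance bound has the same defect and in any case gives only polynomial tails. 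The selection route can be repaired, but not with the tools you invoke: the Gaussian correlation inequality \emph{lower}-bounds probabilities of intersections of symmetric convex events, whereas here one needs an \emph{upper} bound on $\Prob\bigl\{\bigcap_{\ell\in S}\{|Y_{j_\ell}|\le\tau\}\bigr\}$. What actually works is (a) restricted invertibility to extract $r\gtrsim m$ rows whose Gram matrix $\Sigma$ satisfies $\Sigma\succeq c(m/n)\idmat_r$, and (b) the density bound $\Prob\bigl\{\bigcap_{\ell\in S}\{|Y_{j_\ell}|\le\tau\}\bigr\}\le \tau^{2|S|}/\det\Sigma_S\le (C\tau^2 n/m)^{|S|}$, followed by a union bound over $S$. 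Both ingredients are absent from the proposal, and they are the whole difficulty.

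The paper avoids coordinatewise decoupling altogether. It uses the deterministic bound $Y_i^*\ge\min\{\|P_J Y\|_2/\sqrt{n}:\ |J^c|=i\}$, a union bound over the $\binom{n}{i}$ sets $J$, and the small-ball inequality of \cite{LMOT} for $\|P_J\Proj_{E^\perp}G\|_2$, whose exponent is the stable rank $\|W\|_{\HS}^2/\|W\|^2\ge m-i$ of $W=P_J\Proj_{E^\perp}$; this exponent, proportional to $m$, is what absorbs the entropy of the union bound. This $\ell_2$-norm formulation uses exactly your two key inputs (the trace identity and $\|W\|\le1$) but packages the dependence problem into a single small-ball estimate, which is the step your argument leaves open.
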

\begin{proof}
Note that for every $i\leq n$ we have
$$
  Y^*_{i}\geq \min\big\{\|P_J(Y)\|_2/\sqrt{n}:\;J\subset[n],\;|J^c|=i\big\}.
$$
Therefore,
$$
\Prob\big\{Y^*_{i}\leq \tau\big\}\leq {n \choose i} \max\limits_{|J^c|=i}
\Prob\big\{\|P_J(Y)\|_2\leq \tau\sqrt{n}\big\}.
$$
Denoting $W= P_J \Proj_{E^\perp}$, and applying a small ball probability estimate for Gaussian vectors (\cite[Proposition~2.6]{LMOT}, see also Remark~\ref{rem: complex-anti-conc} below), we have
for every $\tau\leq c \Vert W\Vert_{\HS}/\sqrt{n}$,
$$
  \Prob\big\{\|P_J(Y)\|_2\leq \tau\sqrt{n}\big\}\leq \Big(\frac{\tau\sqrt{n}}{\Vert W\Vert_{\HS}}
  \Big)^{c'\frac{\Vert W\Vert_{\HS}^2}{\Vert W\Vert^2}},
$$
where  $c'\in(0, 1)$ is a universal constant. Note that $\Vert W\Vert\leq 1$ and
$$
\Vert W\Vert_{\HS}^2 =
\Tr\big(P_J \Proj_{E^\perp}\big)=  \Tr\big(\Proj_{E^\perp}\big)-\Tr\big(P_{J^c}
\Proj_{E^\perp}\big)\geq
m -\vert J^c\vert.
$$
Therefore for  $\tau< \Vert W\Vert_{\HS}/\sqrt{n}$ and   $i\leq c'm/4$
we have
\begin{align*}
 \Prob\big\{Y^*_{i}\leq \tau\big\}&\leq
  \Big(\frac{en}{i}\Big)^{i}
 \Big(\tau\sqrt{\frac{n}{m -i}}\Big)^{c' (m -i)}
 \leq  \Big(\frac{4en}{c' m}\Big)^{c m/4}   \Big(\tau\sqrt{\frac{2n}{m}}\Big)^{c' m/2}
\leq  \Big(\frac{8 n \tau}{\sqrt{c'}\, m}\Big)^{c' m/2}.
\end{align*}
The choice of $\tau= \sqrt{c'}\, m/(8 e n)$, $i= c' m/4$, and
$c=\min\{\sqrt{c'}/(8 e), c'/4\}$  completes the~proof.
\end{proof}

As a consequence of Lemma~\ref{l: orderstat}, we obtain a bound for
linear combinations.

\begin{prop}[Small ball for linear combinations]\label{l: orderstat unionbound}
Let $n\geq 1$ be large enough, $E\subset\C^n$ be a fixed subspace of $\C^n$
with $m:=\dim E^\perp \geq n^{1/2}$, and  $p\leq n^{1/4}$ be a positive integer.
Then
$$
 \Prob\big\{\inf\limits_{x}\;(Y(x))^*_{c_{\ref{l: orderstat unionbound}}m}
 \leq c_{\ref{l: orderstat unionbound}}\, m/n\big\}\leq
 \exp(-c_{\ref{l: orderstat unionbound}}m),
$$
where the infimum is taken over all complex $p$-dimensional unit vectors $x$
and $c_{\ref{l: orderstat unionbound}}>0$ is a universal constant.
\end{prop}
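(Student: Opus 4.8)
The plan is to derive the statement for arbitrary unit vectors $x$ from the single-vector estimate of Lemma~\ref{l: orderstat} via a net argument, exploiting the fact that $Y(x)$ depends linearly on $x$. First I would fix a realization of the subspace $E$ (so $Y^{(1)},\dots,Y^{(p)}$ are, conditionally, independent Gaussian vectors supported in $E^\perp$, hence $Y(x)=\sum_j x_j Y^{(j)}$ is itself a Gaussian vector supported in $E^\perp$ whose covariance depends on $\|x\|_2$ only). For each \emph{fixed} unit vector $x\in\C^p$, the vector $Y(x)$ is distributed exactly as $Y$ (since the $Y^{(j)}$ are i.i.d.\ and $\sum_j |x_j|^2 = 1$), so Lemma~\ref{l: orderstat} applies verbatim and gives
$$
\Prob\big\{(Y(x))^*_{cm}\leq cm/n\big\}\leq \exp(-cm).
$$
The issue is to pass from one $x$ to all $x$ simultaneously.

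The main step is a discretization of the sphere of $\C^p$ combined with a stability (Lipschitz) property of the map $x\mapsto (Y(x))^*_{cm}$. I would take a $\delta$-net $\Net$ in the unit sphere of $\C^p$ with $|\Net|\leq (3/\delta)^{2p}$ for a suitable small $\delta$ (polynomial in $m/n$, e.g.\ $\delta$ comparable to $m/(n\cdot \text{poly})$ will do since $p\leq n^{1/4}$ makes $|\Net|$ subexponential in $m\geq n^{1/2}$). For $x,x'$ unit vectors with $\|x-x'\|_2\leq\delta$ we have $\|Y(x)-Y(x')\|_2 = \|Y(x-x')\|_2\leq \|x-x'\|_2\,\max_j\|Y^{(j)}\|_2\leq \delta\max_j\|Y^{(j)}\|_2$, and on the high-probability event that all $\|Y^{(j)}\|_2\leq 2\sqrt{m}$ (which fails with probability at most $p\exp(-cm)$ by standard Gaussian norm concentration, $\dim E^\perp = m$) this is at most $2\delta\sqrt m$. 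Since changing a vector in $\ell_2$ by $\eta$ changes each order statistic $(\cdot)^*_i$ by at most $\eta$, we get $|(Y(x))^*_{cm}-(Y(x'))^*_{cm}|\leq 2\delta\sqrt m$. Choosing $\delta$ so that $2\delta\sqrt m\leq cm/(2n)$, i.e.\ $\delta\asymp \sqrt m/n$, the infimum over all $x$ being $\leq cm/(2n)$ forces some net point $x'$ to have $(Y(x'))^*_{cm}\leq cm/n$; a union bound over $\Net$ then gives probability at most $|\Net|\exp(-cm)+p\exp(-cm)$. Finally I would check that $|\Net|\leq (3/\delta)^{2p}=\exp(O(p\ln(n/\sqrt m)))$ which, since $p\leq n^{1/4}$ and $m\geq n^{1/2}$, is $\exp(o(m))$; absorbing this into the exponent and relabeling $c\to c_{\ref{l: orderstat unionbound}}$ (with the target order-statistic index replaced by $c_{\ref{l: orderstat unionbound}}m$ and the threshold by $c_{\ref{l: orderstat unionbound}}m/n$) completes the proof.

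The main obstacle, and the place requiring the most care, is the bookkeeping of constants: the exponent $cm$ coming from Lemma~\ref{l: orderstat} must strictly dominate $\ln|\Net|\asymp p\ln(n/m)$, and this is exactly why the hypotheses $m\geq n^{1/2}$ and $p\leq n^{1/4}$ are imposed — they guarantee $p\ln(n/m)=O(n^{1/4}\ln n)=o(n^{1/2})=o(m)$. One should also be slightly careful that the constant $c$ from Lemma~\ref{l: orderstat} is the one controlling \emph{both} the index $cm$ and the threshold, so that the net perturbation (which is at the scale $\sqrt m/n$, much smaller than $m/n$ since $m\to\infty$) genuinely sits below the threshold; this is automatic but worth stating. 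Everything else — the Gaussian norm deviation bound $\Prob\{\|Y^{(j)}\|_2\geq 2\sqrt m\}\leq \exp(-cm)$, the volumetric net bound, and the Lipschitz property of order statistics under $\ell_2$ perturbations — is routine.
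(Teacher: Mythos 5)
Your proposal is correct and follows essentially the same route as the paper: reduce to a fixed $x$ via the equidistribution $Y(x)\sim Y$, apply Lemma~\ref{l: orderstat}, take a union bound over a net in the sphere of $\C^p$, and control off-net points by the $\ell_2$-Lipschitz stability of order statistics together with a crude bound on $\max_j\|Y^{(j)}\|_2$ (the paper simply uses $\Prob\{\max_j\|Y^{(j)}\|_2\geq n\}<e^{-n}$ and a net at scale $c/(pn^2)$). One small inaccuracy: $\|Y(x)-Y(x')\|_2\leq\sum_j|x_j-x'_j|\,\|Y^{(j)}\|_2\leq\sqrt{p}\,\|x-x'\|_2\max_j\|Y^{(j)}\|_2$, so your Lipschitz bound is off by a factor $\sqrt{p}$; this is harmless since shrinking $\delta$ by $\sqrt{p}$ still keeps $\ln|\Net|=O(p\ln n)=o(m)$.
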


\begin{proof}
Let $\Net$ be a $c/(p n^2)$-net on the set of complex
unit vectors in $\C^p$ with cardinality $|\Net|\leq \big(3 pn^2/c\big)^{2p}$,
where  $c$ is the constant from Lemma~\ref{l: orderstat}.
Since for every $x$ the vector $Y(x)$ has the same distribution as $Y$,
Lemma~\ref{l: orderstat} together with  the union bound implies
\begin{align*}
\Prob\big\{\inf\limits_{x\in\Net}\;(Y(x))^*_{cm}\leq c m/n\big\}
&\leq |\Net|\exp\big(-cm\big)
\leq \exp\big(-cm+2p\ln(3 p n^2/c)\big).
\end{align*}
By the definition of $\Net$, for any unit vector $x\in\C^p$ there is $y=y(x)\in \Net$
such that $\|x-y\|_2\leq c/(p n^2)$, hence
$$
 \|Y(x)-Y(y)\|_2 =\Big\|\sum_{j=1}^p x_j Y^{(j)}-\sum_{j=1}^p y_j Y^{(j)}\Big\|_2
  \leq
  \sum_{j=1}^p | x_j - y_j|\,  \|Y^{(j)}\|_2\leq
  \frac{c}{n^2} \, \max_{j\leq p}\|Y^{(j)}\|_2.
$$
This immediately implies that
$$
  (Y(x))^*_{cm}\geq (Y(y))^*_{cm}
   -\frac{c}{n^2}\max_{j\leq p}\|Y^{(j)}\|_2.
$$
Thus, we obtain a deterministic relation
$$
  \inf\limits_{\|x\|_2=1}\;(Y(x))^*_{cm}\geq
  \inf\limits_{x\in\Net}\;(Y(x))^*_{cm} -
  \frac{c}{n^2}\max_{j\leq p}\|Y^{(j)}\|_2.
$$
This, together with a rough bound $\Prob\{\max\limits_{j\leq p}\|Y^{(j)}\|\geq n\}< e^{-n}$, yields
\begin{align*}
  \Prob\Big\{\inf\limits_{\|x\|_2=1}\;(Y(x))^*_{cm}\leq
   \frac{cm}{2n} \Big\}   &\leq
    \Prob\Big\{\inf\limits_{x\in\Net}\;(Y(x))^*_{cm}
    \leq  \frac{cm}{2n}+\frac{c}{n}\Big\}+e^{-n}\\
    &\leq \Prob\Big\{\inf\limits_{x\in\Net}\;(Y(x))^*_{cm}
    \leq cm/{n} \Big\}+e^{-n}\\
&\leq \exp\big(-cm + 2p\ln(3 p n^2/c)\big)+e^{-n}.
\end{align*}
Since $m\geq \sqrt{n}\geq p^2$, this completes the proof.
\end{proof}

We now pass to upper bounds.

\begin{lemma}[Large deviations of order statistics]\label{l: orderstat deviations}
Let $E$
be as in Lemma~\ref{l: orderstat}.
Then for every $i\leq n/2$ and  $\tau>0$ one has
$$
  \Prob\big\{Y_{i}^*\geq C\sqrt{\ln (n/i)}\big\}\leq \Big(\frac{i}{n}\Big)^i,
$$
where $C>0$ is  a universal constant.
\end{lemma}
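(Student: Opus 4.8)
The plan is to use the same reduction to projections that drove Lemma~\ref{l: orderstat}, but now in the upward direction. Fix $i\leq n/2$ and note that the event $\{Y_i^* \geq t\}$ means that at least $i$ coordinates of $Y$ have modulus at least $t$; equivalently, there is a set $J\subset[n]$ with $|J|=i$ such that $\|P_J(Y)\|_2 \geq t\sqrt{i}$ (taking the $i$ largest coordinates). Hence
$$
\Prob\big\{Y_i^*\geq t\big\}\leq \binom{n}{i}\max_{|J|=i}\Prob\big\{\|P_J(Y)\|_2\geq t\sqrt{i}\big\}.
$$
For a fixed $J$ with $|J|=i$, write $W:=P_J\Proj_{E^\perp}$, so that $P_J(Y)$ has the same distribution as $WG$. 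We have $\|W\|\leq 1$ and $\|W\|_{\HS}^2=\Tr(P_J\Proj_{E^\perp})\leq |J|=i$, so $\|WG\|_2$ is a norm of a Gaussian vector with $\Exp\|WG\|_2^2=\|W\|_{\HS}^2\leq i$. The first step is therefore to invoke a standard Gaussian concentration/deviation bound (e.g.\ the Hanson--Wright or Gaussian concentration inequality, or directly the subgaussian tail for $\|WG\|_2$ in terms of $\|W\|_{\HS}$ and $\|W\|$): for $s\geq 0$,
$$
\Prob\big\{\|WG\|_2\geq \sqrt{i}+s\big\}\leq \exp\big(-c_0 s^2\big)
$$
for a universal constant $c_0>0$ (using $\|W\|\leq 1$).

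The second step is to choose the threshold. Take $t=C\sqrt{\ln(n/i)}$ with $C$ large; then $t\sqrt{i}=C\sqrt{i\ln(n/i)}$, which exceeds $\sqrt{i}+s$ with $s\geq \tfrac12 C\sqrt{i\ln(n/i)}$ once $C$ is large enough (since $\ln(n/i)\geq \ln 2$). Plugging into the deviation bound gives
$$
\Prob\big\{\|P_J(Y)\|_2\geq t\sqrt{i}\big\}\leq \exp\big(-c_0 s^2\big)\leq \exp\big(-\tfrac{c_0 C^2}{4}\, i\ln(n/i)\big)=\Big(\frac{i}{n}\Big)^{c_0 C^2 i/4}.
$$
Combining with the union bound and the elementary estimate $\binom{n}{i}\leq (en/i)^i$,
$$
\Prob\big\{Y_i^*\geq t\big\}\leq \Big(\frac{en}{i}\Big)^{i}\Big(\frac{i}{n}\Big)^{c_0 C^2 i/4}
=\Big(e\cdot\Big(\frac{i}{n}\Big)^{c_0 C^2/4-1}\Big)^{i}\leq \Big(\frac{i}{n}\Big)^{i},
$$
where the last inequality holds provided $C$ is chosen large enough that $c_0 C^2/4-1\geq 2$ and $e\,(i/n)\leq e/2<1$ absorbs the constant $e$ (using $i\leq n/2$); one can always shave off one more power of $i/n$ to kill the $e$. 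This fixes the universal constant $C$ and completes the proof.

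The only mildly delicate point is making sure the deviation inequality for $\|WG\|_2$ is stated in the correct complex-Gaussian normalization and with the operator-norm-controlled variance proxy; this is routine (it is the same ingredient, in the other tail, used via \cite[Proposition~2.6]{LMOT} in Lemma~\ref{l: orderstat}, or can be obtained from $1$-Lipschitzness of $G\mapsto\|WG\|_2$ together with $\|W\|\leq 1$). Everything else is bookkeeping with $\binom{n}{i}\leq(en/i)^i$ and the choice of $C$. I do not anticipate a genuine obstacle here; the lemma is an easy companion to Lemma~\ref{l: orderstat}.
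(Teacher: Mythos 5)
Your proposal is correct and follows essentially the same route as the paper: reduce $Y_i^*$ to $\max_{|J|=i}\|P_J(Y)\|_2/\sqrt{i}$, take a union bound over the $\binom{n}{i}$ subsets, and apply a Gaussian deviation inequality to $\|WG\|_2$ with $W=P_J\Proj_{E^\perp}$ (the paper invokes Hanson--Wright where you use Lipschitz Gaussian concentration, but these are interchangeable here, and both require the same complex-to-real reduction noted in Remark~\ref{rem: complex-anti-conc}). The bookkeeping with $\binom{n}{i}\leq(en/i)^i$ and the choice of $C$ matches the paper's conclusion.
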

\begin{proof}
Note that for a fixed $i\leq n$ we have
$$
 Y^*_i\leq \max\big\{\|P_J(Y)\|_2/\sqrt{i}:\;J\subset[n],\;|J|=i\big\}.
$$
Thus,
$$\Prob\{Y^*_i\geq\tau\}\leq {n\choose i}\cdot  \max\limits_{|J|=i}
\Prob\big\{\|W G\|_2\geq \tau\sqrt{i}\big\},
$$
where $W= P_J \Proj_{E^\perp}$. Using that $\Exp \|W G\|_2^2 = \Tr(W)\leq i$, we get
$$
\Prob\big\{\|W G\|_2\geq \tau\sqrt{i}\big\}\leq \Prob\big\{\|W G\|_2^2\geq \Exp\|W G\|_2^2 + (\tau^2-1)i\big\}.
$$
Applying  Hanson--Wright inequality (see for example \cite[Theorem~1.1]{RV} and Remark~\ref{rem: complex-anti-conc}),
we obtain that for any $\tau\geq \sqrt{2}$
$$
\Prob\big\{\|W G\|_2\geq \tau\sqrt{i}\big\}\leq \exp\big(-c \tau^2 i\big),
$$
for some absolute positive constant $c$. Taking $\tau= C \sqrt{\ln (n/i)}$ for sufficiently large
constant $C$, completes the proof.
\end{proof}

\begin{rem}\label{rem: complex-anti-conc}
The results of \cite{LMOT} and \cite{RV} used in this section are both formulated for real matrices and real random vectors. However, this is easily overcome by noticing that
if $W$ is an $n\times n$ complex matrix and $x\in \C^n$, then one may associate the $(2n)\times (2n)$ matrix
$$
\widetilde W=  \left[ {\begin{array}{cc}
   \Re\, (W) & -\Im\, (W) \\
   \Im\, (W) & \Re\, (W)\\
  \end{array} } \right] \quad \text{ and }\quad \widetilde x=  \left[ {\begin{array}{c}
   \Re\, (x)  \\
   \Im\, (x)  \\
  \end{array} }\right],
$$
where $\Re$ and $\Im$ denote the real and imaginary parts.  Now notice that $\Vert \widetilde W \widetilde x\Vert_2=\Vert W  x\Vert_2$ and thus $\Vert \widetilde W\Vert= \Vert W\Vert$. Moreover, one can check that $\Vert \widetilde W\vert_{\HS}^2= 2\Vert W\Vert_{\HS}^2$. Therefore, one could apply the results of \cite{LMOT} and \cite{RV} to $\widetilde W$ and deduce the analogous results for the complex case.
\end{rem}

As a  consequence of Lemma~\ref{l: orderstat deviations} we obtain a bound for
linear combinations.

\begin{prop}\label{p: orderstat combined}
Let $n$ be a large enough integer, $E$ be a fixed subspace of $\C^n$ with $m:=\dim E^\perp\geq n^{1/2}$, and
 $p\leq n^{1/4}$ be a positive integer.
Then
%
$$
  \Prob\big\{\sup\limits_{x}\;(Y(x))^*_i\geq C_{\ref{p: orderstat combined}} p\sqrt{\ln (n p/i)}
  \mbox{ for some }i\leq  c_{\ref{p: orderstat combined}}m\big\}\leq 2/\sqrt{n},
$$
where $C_{\ref{p: orderstat combined}},c_{\ref{p: orderstat combined}}$ are universal positive constants.
\end{prop}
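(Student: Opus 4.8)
The plan is to follow the same route as in the passage from Lemma~\ref{l: orderstat} to Proposition~\ref{l: orderstat unionbound}, but with Lemma~\ref{l: orderstat deviations} in place of Lemma~\ref{l: orderstat}, and to observe that for the \emph{upper} bound one can avoid the net over $x$ altogether: the triangle inequality already produces an estimate uniform in $x$. Indeed, for any complex unit vector $x\in\C^p$ and any index $k\le n$ one has
$$
  |Y(x)_k|=\Big|\sum_{j=1}^{p}x_j\,Y^{(j)}_k\Big|\le\sum_{j=1}^{p}|Y^{(j)}_k|,
$$
since $|x_j|\le\|x\|_2=1$. Thus, if each $Y^{(j)}$ has only few coordinates exceeding a common threshold $t$, then $Y(x)$ has only few coordinates exceeding $pt$, the exceptional index sets combining via a union bound.

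To make this precise, fix a small universal constant $c\in(0,1/4]$, and for a positive integer $i$ set $v=v(i):=\lceil i/p\rceil\ge1$. Writing $C$ for the constant of Lemma~\ref{l: orderstat deviations}, consider the event
$$
  \mathcal{G}:=\bigcap_{v=1}^{\lceil cm/p\rceil}\ \bigcap_{j=1}^{p}\Big\{(Y^{(j)})^{*}_{v}\le C\sqrt{\ln(n/v)}\Big\}.
$$
On $\mathcal{G}$, for each admissible $v$ and each $j\le p$ the set $S_j:=\{k\le n:\ |Y^{(j)}_k|>C\sqrt{\ln(n/v)}\}$ has $|S_j|\le v-1$, whence $\big|\bigcup_{j\le p}S_j\big|\le p(v-1)\le i-1$ (an elementary computation using $v=\lceil i/p\rceil$ and $p\ge1$), and for $k\notin\bigcup_{j}S_j$ the displayed inequality gives $|Y(x)_k|\le pC\sqrt{\ln(n/v)}$. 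Hence, for every complex unit vector $x$ and every integer $i\le cm$,
$$
  (Y(x))^{*}_{i}\le pC\sqrt{\ln(n/v)}\le pC\sqrt{\ln(np/i)},
$$
the last inequality using $v\ge i/p$ (and $n/v,\,np/i>1$). This is the assertion of the proposition with $C_{\ref{p: orderstat combined}}=C$ and $c_{\ref{p: orderstat combined}}=c$.

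It remains to estimate $\Prob(\mathcal{G}^c)$. Each $Y^{(j)}$ is distributed as $Y=\Proj_{E^\perp}G$, and for $1\le v\le\lceil cm/p\rceil$ one has $v\le cn+1\le n/2$ once $n$ is large; so Lemma~\ref{l: orderstat deviations} applies and yields $\Prob\{(Y^{(j)})^{*}_{v}\ge C\sqrt{\ln(n/v)}\}\le(v/n)^v$. A union bound over the $p$ copies and over $v$ gives
$$
  \Prob(\mathcal{G}^c)\le\sum_{1\le v\le\lceil cm/p\rceil}p\Big(\frac{v}{n}\Big)^{v}\le\frac{p}{n}+\sum_{2\le v\le n/2}p\Big(\frac{v}{n}\Big)^{v}\le 2n^{-3/4}\le\frac{2}{\sqrt n}
$$
for $n$ large: the first term is at most $n^{-3/4}$ because $p\le n^{1/4}$, and the remaining sum is negligible (for $2\le v\le\sqrt n$ one has $p(v/n)^v\le n^{1/4-v/2}$, and for $v>\sqrt n$ the summand is exponentially small in $\sqrt n$). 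Since the event in the proposition is contained in $\mathcal{G}^c$, this completes the proof. Given Lemma~\ref{l: orderstat deviations}, the argument is short; the steps needing attention are the bookkeeping $p(v-1)\le i-1$ and the verification that the last sum is dominated by its $v=1$ term, both resting on the hypothesis $p\le n^{1/4}$ — while $m\ge n^{1/2}$ serves only to guarantee that $m$ exceeds the absolute constant demanded by Lemma~\ref{l: orderstat deviations}.
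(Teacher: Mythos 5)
Your proposal is correct and follows essentially the same route as the paper: both arguments reduce to the deterministic fact that if each $(Y^{(j)})^*_{\lceil i/p\rceil}$ is at most $T$ then $(Y(x))^*_i\le pT$ for every unit $x$ (you derive it via the triangle inequality and counting exceptional coordinates, the paper via a pigeonhole over $i$-subsets — the same combinatorial fact in contrapositive form), after which both conclude by a union bound over the $p$ copies and the admissible indices using Lemma~\ref{l: orderstat deviations}. The only cosmetic point is the boundary between your conclusion ``$\le pC\sqrt{\ln(np/i)}$'' and the event ``$\ge C_{\ref{p: orderstat combined}}p\sqrt{\ln(np/i)}$'', which is absorbed by taking $C_{\ref{p: orderstat combined}}$ slightly larger than $C$.
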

\begin{proof}
Fix $i\leq n$ and a collection of $n$-dimensional vectors $\{z^1,z^2,\dots,z^p\}$.
Observe that for any subset $J\subset[n]$ of cardinality $i$, one has
\begin{align*}
\min\limits_{j\in J}|(z^1+\dots+z^p)_j|
\leq \min\limits_{j\in J}\sum\limits_{\ell=1}^p |z^\ell_j|
\leq p\min\limits_{j\in J}\max\limits_{\ell\leq p}|z^\ell_j|:=p\,a.
\end{align*}
For any $j\in J$ there is $\ell=\ell(j)\leq p$ such that $|z^\ell_j|\geq a$.
Hence, by the pigeonhole principle, there is $\ell_0\leq p$ such that
$|z^{\ell_0}_j|\geq a$ for at least $|J|/p=i/p$ indices from $J$.
Thus, we obtain
$$
  \min\limits_{j\in J}|(z^1+\dots+z^p)_j|\leq p\max\limits_{\ell\leq p}\; (z^\ell)_{\lceil i/p\rceil}^*.
$$
Note that the right hand side does not depend on the choice of $J$, therefore
$$
 (z^1+\dots+z^p)^*_i\leq p\max\limits_{\ell\leq p}\; (z^\ell)_{\lceil i/p\rceil}^*.
$$
Returning to vectors $Y^{(1)},\dots,Y^{(p)}$ we get
for any unit complex vector $x$,
$$
  (Y(x))^*_i\leq p\max\limits_{\ell\leq p}\; (x_\ell Y^{(\ell)})_{\lceil i/p\rceil}^*
  \leq p\max\limits_{\ell\leq p}\; (Y^{(\ell)})_{\lceil i/p\rceil}^*.
$$
Thus, denoting $m:=\dim E^\perp$ and applying Lemma~\ref{l: orderstat deviations},
we obtain for appropriate absolute constants $C\geq c>0$,
\begin{align*}
\Prob\big\{&\sup\limits_{x}\;(Y(x))^*_i\geq C p\sqrt{\ln (n p/i)}\mbox{ for some }i\leq cm\big\}\\
&\leq \Prob\big\{(Y^{(\ell)})_{\lceil i/p\rceil}^*\geq C\sqrt{\ln (n p/i)}\mbox{ for some }i\leq cm\mbox{ and }\ell\leq p\big\}\\
&\leq p\sum_{i=1}^{cm}\Big(\frac{\lceil i/p\rceil}{n}\Big)^{\lceil i/p\rceil}\leq\frac{2p^2}{n}\leq \frac{2}{\sqrt n},
\end{align*}
provided that $n$ is large enough. This completes the proof.
\end{proof}

\subsection{Strongly correlated indices}

Let $E$ be a fixed subspace of $\C^n$ and $Y=\Proj_{E^\perp} G$ as before.
Let $\alpha,\beta>0$ be parameters.
We say that a pair of indices $(i,j)$ is {\it $(\alpha,\beta)$-strongly correlated} (with respect to $E$) if
$$
 \Prob\{|Y_i-Y_j|\geq \alpha\}\leq\beta.
$$
Next, we inductively construct a sequence of (non-random) sets $(U_\ell)_{\ell\ge 1}=(U_\ell(\alpha, \beta))_{\ell\ge 1}$.
At the first step, choose $U_1$ as the largest subset of $[n]$ such that there is $u_1\in U_1$
so that $(u_1,u)$ is $(\alpha,\beta)$-strongly correlated for all $u\in U_1$. At the $\ell$-th step,
we define
$$
 U_\ell\subset \bar U_\ell := [n]\setminus (U_1\cup\ldots\cup U_{\ell-1})
$$
as the largest subset of $\bar U_\ell$,
such that there is an index $u_\ell\in U_\ell$ so that
$(u_\ell,u)$ is $(\alpha,\beta)$-strongly correlated for all $u\in U_\ell$ (if $\bar U_\ell=\emptyset$
then we set $U_\ell=\emptyset$ as well).
Further, it will be convenient for us to assume that the sequence $(U_\ell)_{\ell\ge 1}$
is uniquely defined. This can be achieved, for example, by defining a total order respecting cardinality
on the set of all subsets of $[n]$
and, at each step above, choosing the ``greatest'' admissible set with respect to that order.
Observe that, by the construction of $U_\ell$'s, the sequence of cardinalities $(|U_\ell|)_{\ell\ge 1}$
is non-increasing. Note that for every $\ell$ and for every $i, j\in U_\ell$, the pair
$(i,j)$ is $(2\alpha,2\beta)$-strongly correlated.

\begin{lemma}\label{l: sasha's}
Assume that a pair $(i,j)$ is not $(\alpha,\beta)$-strongly correlated for some $\alpha>0$ and $\beta\in(0,1/2]$. Then
for every $s>0$ one has
$$\Prob\big\{|Y_i-Y_j|\leq \alpha\, s/\sqrt{ \ln (1/\beta)}\big\}\leq s^2.$$
\end{lemma}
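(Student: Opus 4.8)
The plan is to reduce the statement to a one-dimensional computation with a single complex Gaussian, since $Y_i - Y_j$ is itself a centered complex Gaussian whose variance is pinned down by the failure of strong correlation.

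First I would write $Y_i - Y_j = \langle \Proj_{E^\perp}G, e_i-e_j\rangle = \langle G, v\rangle$, where $v:=\Proj_{E^\perp}(e_i-e_j)$, using that $\Proj_{E^\perp}$ is self-adjoint. By orthogonal invariance of $\gamma_n$ --- exactly as in \eqref{l: line projection} --- the variable $\langle G,v\rangle$ has the same distribution as $\|v\|_2\,g$ for a standard complex Gaussian $g$; in particular $\Prob\{|Y_i-Y_j|\geq t\}=\exp(-t^2/\|v\|_2^2)$ for every $t\geq 0$. Note $v\neq 0$: otherwise $Y_i-Y_j=0$ almost surely, so $\Prob\{|Y_i-Y_j|\geq\alpha\}=0\leq\beta$, contradicting the hypothesis (recall $\beta<1$).

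Next I would extract a lower bound on $\|v\|_2$ from the assumption. That $(i,j)$ is not $(\alpha,\beta)$-strongly correlated means $\Prob\{|Y_i-Y_j|\geq\alpha\}>\beta$, i.e.\ $\exp(-\alpha^2/\|v\|_2^2)>\beta$. Since $\beta\leq 1/2$ we have $\ln(1/\beta)>0$, and rearranging gives $\|v\|_2>\alpha/\sqrt{\ln(1/\beta)}$. Consequently, for any $s>0$,
\begin{align*}
\Prob\Big\{|Y_i-Y_j|\leq \frac{\alpha s}{\sqrt{\ln(1/\beta)}}\Big\}
&=\Prob\Big\{|g|\leq \frac{\alpha s}{\|v\|_2\sqrt{\ln(1/\beta)}}\Big\}
\leq \Prob\{|g|\leq s\}=1-e^{-s^2}\leq s^2,
\end{align*}
where the first inequality uses $\alpha/(\|v\|_2\sqrt{\ln(1/\beta)})<1$ together with monotonicity of $t\mapsto\Prob\{|g|\leq t\}$, and the last inequality is $1-e^{-x}\leq x$ applied at $x=s^2$. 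This is precisely the claimed bound.

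There is no genuinely hard step: the argument is a two-line manipulation of the explicit law $\Prob\{|g|\leq t\}=1-\exp(-t^2)$. The only point requiring a little care is bookkeeping the normalization of the complex Gaussian and of $\Proj_{E^\perp}(e_i-e_j)$, which is already handled by \eqref{l: line projection}.
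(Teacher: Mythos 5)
Your proof is correct and follows essentially the same route as the paper's: identify $Y_i-Y_j$ as a centered complex Gaussian, use the hypothesis together with the explicit tail $\Prob\{|g|>t\}=e^{-t^2}$ to lower-bound its standard deviation by $\alpha/\sqrt{\ln(1/\beta)}$, and conclude via $\Prob\{|g|\leq s\}=1-e^{-s^2}\leq s^2$. The only difference is cosmetic: you make the variance explicit as $\|\Proj_{E^\perp}(e_i-e_j)\|_2^2$ and note it is nonzero, which the paper leaves implicit.
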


\begin{proof}
Set $\xi:=Y_i-Y_j$.
Observe that $\xi$ is a centered complex Gaussian variable and
denote its variance by $\sigma^2$. By the assumption of the lemma and
by (\ref{l: line projection}), we have
\begin{equation*}\label{eq: aux 39hgehg}
  \beta\leq  \Prob\{|\xi|>\alpha\} =    e^{-\alpha^{2}/\sigma^2},
\end{equation*}
which implies that
$
  \sigma\geq \alpha/\sqrt{ \ln (1/\beta)}.
$
Since for every $s>0$,
$$
 \Prob\{|\xi|\leq s \sigma\} = 1-e^{-s^2} \leq s^2
$$
 the desired result follows.
\end{proof}

The last lemma, combined with averaging arguments, implies the following lemma.

\begin{lemma}\label{l: Uells}
Let $\alpha>0$ and $\beta\in(0,1/2]$, and let the sequence $(U_\ell)_{\ell\ge 1}$
be defined
as above.
Let $k\geq 1$ and $b>0$ be such that $|U_k|\leq b$.
Then for every $s>0$ one has
$$\Prob\Big\{\exists \lambda\in\C:\;\Big|\Big\{j\in \bigcup_{\ell\ge k}
U_\ell:\;|Y_j-\lambda|\leq \alpha\,s/\sqrt{16\ln (1/\beta)}\Big\}\Big|\geq 2b\Big\}
\leq \frac{(sn)^2}{2b^2}.$$
\end{lemma}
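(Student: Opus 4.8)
The plan is to reduce the union $\bigcup_{\ell\ge k} U_\ell$ to a bounded-sized set of representatives, then apply Lemma~\ref{l: sasha's} to control the small-ball probability for the differences $Y_j - Y_{j'}$ with $j,j'$ living in different blocks, and finally run a union bound together with an averaging argument to handle the (unknown) shift $\lambda\in\C$. First I would recall the key structural feature of the construction: for $\ell > k$ the set $U_\ell$ was chosen as the \emph{largest} admissible subset of $\bar U_\ell$, and since $U_k$ itself was admissible at step $k$ (when $U_\ell\subset \bar U_k$ was still available), the non-increasing property of $(|U_\ell|)_{\ell\ge 1}$ gives $|U_\ell|\le |U_k|\le b$ for every $\ell\ge k$. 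Pick one representative index $u_\ell$ (the center witnessing strong correlation) from each nonempty $U_\ell$, $\ell\ge k$; call the resulting set of representatives $R$. The crucial claim is that any two \emph{distinct} representatives $u_\ell, u_{\ell'}$ form a pair that is \emph{not} $(\alpha,\beta)$-strongly correlated — otherwise $U_{\ell'}$ (the later block) could have absorbed $u_\ell$ or been enlarged, contradicting maximality. So Lemma~\ref{l: sasha's} applies to every such pair.

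Next I would set $\delta := \alpha s/\sqrt{16\ln(1/\beta)}$ and observe that if, for some $\lambda$, the set $\{j\in\bigcup_{\ell\ge k}U_\ell : |Y_j-\lambda|\le \delta\}$ has size at least $2b$, then, since each $U_\ell$ has at most $b$ elements, this set must intersect at least two distinct blocks $U_\ell, U_{\ell'}$. Pick $j\in U_\ell$ and $j'\in U_{\ell'}$ in this set; by the triangle inequality $|Y_j - Y_{j'}|\le 2\delta$. But $j$ is $(2\alpha,2\beta)$-strongly correlated to $u_\ell$ and $j'$ to $u_{\ell'}$ (the observation recorded just before the lemma), so $|Y_j - Y_{u_\ell}|$ and $|Y_{j'} - Y_{u_{\ell'}}|$ are each small with probability close to $1$; chaining gives that $|Y_{u_\ell} - Y_{u_{\ell'}}|$ is on the order of $\delta$ plus Gaussian fluctuations. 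The cleanest route is to bound, using Lemma~\ref{l: sasha's} applied with parameters $(2\alpha, 2\beta)$ to the non-strongly-correlated representative pair (after checking $2\beta\le 1$, which holds since $\beta\le 1/2$),
$$
\Prob\big\{|Y_{u_\ell} - Y_{u_{\ell'}}|\le 2\alpha\cdot \tfrac{s'}{\sqrt{\ln(1/(2\beta))}}\big\}\le (s')^2,
$$
and then choose $s'$ so that the threshold dominates the relevant multiple of $\delta$, which — tracking the constants $16$ inside $\delta$ against the factors $2$ from doubling $\alpha$ and from the triangle inequality — works out with $s' $ a fixed multiple of $s$, giving a bound $O(s^2)$ for each representative pair.

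Finally, to remove the dependence on $\lambda$, I would use the standard trick that if some translate of the ball of radius $\delta$ captures two of the points $Y_{u_\ell}$, it suffices to union-bound over \emph{pairs} of representatives: the event in the lemma is contained in $\bigcup_{u_\ell\ne u_{\ell'}\in R}\{|Y_{u_\ell}-Y_{u_{\ell'}}|\le 2\delta\}$ (plus the events controlling the block-to-representative distances, which I would absorb by redefining $s$). Since $|R|\le $ (number of nonempty blocks) $\le n/1 = n$ — more crudely, $|R|\le n$ — the number of pairs is at most $n^2/2$, and each contributes at most a constant multiple of $s^2/b^2$ after the representatives' small-ball bound is rescaled by the factor coming from $2b$ versus $b$; assembling these yields the claimed bound $(sn)^2/(2b^2)$. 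The main obstacle I anticipate is the bookkeeping of constants: one must verify that the factor $16$ under the square root in the definition of $\delta$ is exactly what is needed so that, after doubling $\alpha$ (to pass from pairs to the $(2\alpha,2\beta)$ correlation) and applying the triangle inequality twice (representative $\to$ block element $\to$ $\lambda$ $\to$ block element $\to$ representative), the probability per pair is genuinely $\le (sn/b)^2 \cdot (\text{const})$ with the constant small enough to close at $(sn)^2/(2b^2)$ after summing over $\le n^2$ pairs; this is routine but delicate, and is the reason the lemma is stated with the specific constant $16$ rather than a generic universal constant.
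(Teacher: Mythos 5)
Your proposal has two genuine gaps, and the paper's argument is structured quite differently precisely to avoid them. First, the chaining to representatives does not close at the right scale. The event gives you $j\in U_\ell$ and $j'\in U_{\ell'}$ with $|Y_j-Y_{j'}|\le 2\delta$ where $\delta=\alpha s/\sqrt{16\ln(1/\beta)}$, but passing from $j$ to the center $u_\ell$ via strong correlation only gives $|Y_j-Y_{u_\ell}|<\alpha$ with probability $\ge 1-\beta$ (that is the \emph{definition} of $(\alpha,\beta)$-strong correlation). So the chain yields $|Y_{u_\ell}-Y_{u_{\ell'}}|\lesssim 2\alpha+2\delta\approx 2\alpha$, not $O(\delta)$. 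Applying Lemma~\ref{l: sasha's} to the non-correlated pair $(u_\ell,u_{\ell'})$ at threshold $2\alpha$ forces $s'=2\sqrt{\ln(1/\beta)}$ and a probability bound $4\ln(1/\beta)>1$, which is vacuous; your plan to take $s'$ a fixed multiple of $s$ only covers the $\delta$-part of the error, not the $\alpha$-part. (There is also an additive failure probability of order $n\beta$ from union-bounding the correlation events over the unknown $j,j'$, which is not dominated by $(sn)^2/(2b^2)$ for general $\beta\le 1/2$.) Second, even granting a per-pair bound of $O(s^2)$, a union bound over $\le n^2/2$ representative pairs gives $O(s^2n^2)$ with no $1/b^2$; in the application $b\sim|I^c|$ and $s\sim(|I^c|/n)^{3/2}$, so the $1/b^2$ factor is exactly what makes the final bound $|I^c|/(2n)$ rather than $|I^c|^3/n$, and "rescaling by the factor coming from $2b$ versus $b$" does not produce it.

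The paper's proof avoids representatives and chaining altogether. It first notes that by maximality of the construction each $i\in U:=\bigcup_{\ell\ge k}U_\ell$ has at most $b$ strongly correlated partners inside $U$ (a set $K_i$), so Lemma~\ref{l: sasha's} applies to every $j\in U\setminus K_i$. Defining $\Event_i$ as the event that at least $2b$ indices $j\in U$ satisfy $|Y_i-Y_j|\le \alpha s/\sqrt{\ln(1/\beta)}$, at least $b$ of these must lie outside $K_i$, and a first application of Markov's inequality (over $j$) gives $\Prob(\Event_i)\le s^2n/b$. The triangle inequality converts the $\lambda$-centered event at scale $\alpha s/\sqrt{16\ln(1/\beta)}$ into the statement that \emph{every} one of the $\ge 2b$ captured indices $i$ realizes $\Event_i$, i.e.\ $\sum_{i\in U}\chi_{\Event_i}\ge 2b$, and a second application of Markov's inequality (over $i$) yields $\frac{1}{2b}\cdot n\cdot\frac{s^2n}{b}=(sn)^2/(2b^2)$. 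The two factors of $1/b$ thus come from two separate averaging steps, and the constant $16$ is only there to absorb the two triangle inequalities ($\lambda\to Y_i\to Y_j$), not any doubling of $\alpha$. I would recommend restructuring your argument along these lines.
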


\begin{proof}
First, observe that by the construction of  $(U_\ell)_{\ell\ge 1}$, for every
$i\in U:= \bigcup_{\ell\ge k} U_\ell$ we have
$$
 \big|\big\{j\in U
 \, :\, \, (i,j)\mbox{ are $(\alpha,\beta)$-strongly correlated}\big\}\big|\leq b.
$$
Hence, by Lemma~\ref{l: sasha's}, there is a non-random set $K_i\subset[n]$
such that  $|K_i|\le b$ and
$$
 \Prob\big\{|Y_i-Y_j|\leq \alpha\, s/\sqrt{ \ln (1/\beta)}\big\}\le s^2
$$
for all $s>0$ and $j\in U\setminus K_i$. Fix now $s>0$ and for every $i\in U$ define
the event
$$
  \Event_i:=\big\{\big|\big\{j\in U \, : \, \,
  |Y_i-Y_j|\leq \alpha\, s/\sqrt{ \ln (1/\beta)}\big\}\big|\geq 2b\big\}.
$$
Note that $\Event_i$ is contained in the event
$
 |\{j\in U \setminus K_i :  \,
  |Y_i-Y_j|\leq \alpha\, s/\sqrt{ \ln (1/\beta)}\big\}\big|\geq b.
$
Hence, applying Markov's inequality, we get
$$
 \Prob(\Event_i)\leq \frac{1}{b}\sum_{j\in U\setminus K_i}
 \Prob\big\{|Y_i-Y_j|\leq \alpha\, s/\sqrt{ \ln (1/\beta)}\big\} \leq \frac{s^2 n}{b}.
$$
Further,
\begin{align*}
\Prob&\Big\{\exists \lambda\in\C:\;\big|\big\{j\in U
 :\,\,  |Y_j-\lambda|\leq \alpha\,s/\sqrt{16\ln (1/\beta)}\big\}\big|\geq 2b\Big\}
\\&\leq
\Prob\Big\{\exists\,i\in U
 :\, \,
\big|\big\{j\in U:\;
|Y_i-Y_j|\leq \alpha\,s/\sqrt{ 4\ln (1/\beta)}\big\}\big|\geq 2b\Big\}\\
&\leq
\Prob\Big\{\exists\,J\subset U
 :\, \, |J|\geq 2b,\, \forall\,j_1\in J\, \, \, \,
\big|\big\{j\in U:\,
|Y_i-Y_{j_1}|\leq {\alpha\,s}/{\sqrt{ \ln (1/\beta)}}\big\}\big|\geq 2b\Big\}\\
&=\Prob\Big\{
\sum_{i\in U} \chi _{\Event_i}\geq 2b
\Big\}
\leq \frac{1}{2b}\sum_{i\in U}\Prob(\Event_i)\leq \frac{n}{2b}\cdot \frac{s^2 n}{b},
\end{align*}
where in the last formula we used Markov's inequality again.
This completes the proof.
\end{proof}

We will use all properties of Gaussian vectors established previously to show that if the number of strongly correlated pairs associated to $E$ is large, then we can construct an orthogonal vector to $E$ satisfying none of the assumptions of Theorem~\ref{t: kernel restated}, i.e., a normal vector to $E$ which is neither very steep nor sloping with many levels.

\begin{lemma}\label{l: strg corr to structure}
There exist universal constants $C_{\ref{l: strg corr to structure}}$ and $c_{\ref{l: strg corr to structure}}$ such that the following holds.
Let $\gamma>0$ and  $E$ be a fixed subspace of $\C^n$ with $m:=\dim E^\perp \geq n^{3/4}$. Denote
$$
  \alpha:=\exp\Big(-C_{\ref{l: strg corr to structure}} \Big(\frac{n}{m}\Big)^\gamma\Big),\quad \quad
  \beta:=\frac{1}{4}\Big(\frac{m}{4 n}\Big)^3, \quad  \quad \text{ and } \quad \quad V= 2\Big(\frac{m}{n}\Big)^{\gamma/2} n.
$$
Let the sequence $(U_\ell)_{\ell\ge 1}$
be defined
as above and $\ell_0\leq 4 n/m$. Suppose that $|\bigcup_{\ell=1}^{\ell_0-1}U_\ell|> V$.
Then there exists a vector $x\in \C^n$ orthogonal to $E$ such that
$$
  \forall i \leq  c_{\ref{l: strg corr to structure}}m  :\,  \, \, \,
  x_i^*\leq 0.9\, (n/i)^{3} x_{c_{\ref{l: strg corr to structure}} m}^*
$$
 and for all $\lambda\in\C$,
$$
\Big|\Big\{i\leq n:\,|x_i-\lambda |\leq \exp\big(- 2(n/m)^\gamma\big) x_{c_{\ref{l: strg corr to structure}} m}^* \Big\}\Big|> \Big(\frac{m}{n}\Big)^{\gamma/2} n.
$$
In other words,  there exists a vector $x\in E^\perp$, which is neither very steep nor sloping with many levels in the sense of Theorem~\ref{t: kernel restated}.
\end{lemma}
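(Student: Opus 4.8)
\noindent\emph{A proof plan.} The plan is to produce $x$ as a linear combination $x=\sum_{j=1}^{p}v_j\,Y^{(j)}$ of $p\le\ell_0-1<4n/m$ independent copies $Y^{(1)},\dots,Y^{(p)}$ of the uniform random normal $Y=\Proj_{E^\perp}G$ (one for each nonempty $U_\ell$ with $\ell<\ell_0$), where the unit coefficient vector $v\in\C^p$ is chosen \emph{after} the $Y^{(j)}$ are observed, so as to align the ``cluster values'' of the $p$ copies over the sets $U_1,\dots,U_p$; since each $Y^{(j)}\in E^\perp$, this forces $x\in E^\perp$ for free. Two remarks drive the parameter choices. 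First, the hypothesis $|\bigcup_{\ell\le p}U_\ell|>V=2(m/n)^{\gamma/2}n$ together with $|\bigcup_{\ell\le p}U_\ell|\le n$ yields $(n/m)^\gamma>4$; this is essentially the only place the hypothesis enters, and it is what lets every inequality below close with universal constants. Second, $p<4n/m\le 4n^{1/4}$, which falls in the regime where Propositions~\ref{l: orderstat unionbound} and~\ref{p: orderstat combined} are valid (their proofs only use $p^2\ll m$ and $p^2/n\to0$, both of which hold here).

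First I would fix a ``good'' event $\mathcal{G}$, of probability at least $1/2$ for large $n$, on which: (a)~for each $\ell\le p$ there is $U_\ell'\subset U_\ell$ with $|U_\ell'|\ge|U_\ell|/2$ and $\max_{j\le p}|Y^{(j)}_i-Y^{(j)}_{u_\ell}|<\alpha$ for every $i\in U_\ell'$, where $u_\ell\in U_\ell$ is a representative witnessing the strong correlation in the definition of $U_\ell$; (b)~$(Y(v))^*_{c_{\ref{l: orderstat unionbound}}m}\ge c_{\ref{l: orderstat unionbound}}m/n$ for all unit $v$ (Proposition~\ref{l: orderstat unionbound}); (c)~$(Y(v))^*_i\le C_{\ref{p: orderstat combined}}\,p\sqrt{\ln(np/i)}$ for all $i\le c_{\ref{p: orderstat combined}}m$ and all unit $v$ (Proposition~\ref{p: orderstat combined}). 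Item~(a) comes from $\Prob\{|Y^{(j)}_i-Y^{(j)}_{u_\ell}|\ge\alpha\}\le\beta$ for $i\in U_\ell$, $j\le p$ (since $(u_\ell,i)$ is $(\alpha,\beta)$-strongly correlated and $Y^{(j)}$ is distributed as $Y$), followed by Markov's inequality and a union bound over $j\le p$ and $\ell\le p$; its failure probability is at most $2p^2\beta\le m/(8n)$, while (b) and (c) fail with probability $o(1)$.

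On $\mathcal{G}$, I would then set $\lambda^{(j)}_\ell:=Y^{(j)}_{u_\ell}$ and pick a unit $v\in\C^p$ solving the homogeneous system $\sum_{j}v_j(\lambda^{(j)}_\ell-\lambda^{(j)}_1)=0$, $\ell=2,\dots,p$ (fewer equations than unknowns, so a nonzero solution exists; vacuous if $p=1$), and put $x:=Y(v)$, $\mu:=\sum_j v_j\lambda^{(j)}_1$, so that $\sum_j v_j\lambda^{(j)}_\ell=\mu$ for every $\ell\le p$. For $i\in U_\ell'$, Cauchy--Schwarz gives $|x_i-\mu|=\big|\sum_j v_j(Y^{(j)}_i-\lambda^{(j)}_\ell)\big|\le\sqrt p\,\alpha$, hence $\big|\{i\le n:\ |x_i-\mu|\le\sqrt p\,\alpha\}\big|\ge\sum_{\ell\le p}|U_\ell'|\ge\frac{1}{2}\sum_{\ell\le p}|U_\ell|=\frac{1}{2}\big|\bigcup_{\ell\le p}U_\ell\big|>\frac{1}{2}V=(m/n)^{\gamma/2}n$. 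Choosing $c_{\ref{l: strg corr to structure}}\le\min\{c_{\ref{l: orderstat unionbound}},c_{\ref{p: orderstat combined}}\}$ small enough, it then remains to verify, on $\mathcal{G}$, the two estimates $\sqrt p\,\alpha\le\exp(-2(n/m)^\gamma)(Y(v))^*_{c_{\ref{l: orderstat unionbound}}m}$ (which, since $(Y(v))^*_{c_{\ref{l: strg corr to structure}}m}\ge(Y(v))^*_{c_{\ref{l: orderstat unionbound}}m}$, turns the previous display into the required ``many levels'' statement with $\lambda=\mu$ — i.e.\ $x$ fails the second clause of ``sloping with many levels'') and $(Y(v))^*_i\le 0.9(n/i)^3(Y(v))^*_{c_{\ref{l: strg corr to structure}}m}$ for all $i\le c_{\ref{l: strg corr to structure}}m$ (i.e.\ $x$ is not very steep). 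Substituting (b), (c), $p<4n/m$, $m\le n$ and $\alpha=\exp(-C_{\ref{l: strg corr to structure}}(n/m)^\gamma)$, and writing $s:=n/m$, the powers of $n$ cancel and the first reduces to $(C_{\ref{l: strg corr to structure}}-2)s^\gamma\ge\frac{3}{2}\ln s+O(1)$ and the second (worst case $i=c_{\ref{l: strg corr to structure}}m$) to $s$ being at least a constant multiple of $\sqrt{\ln s}$; both hold once $C_{\ref{l: strg corr to structure}}$ is a large, and $c_{\ref{l: strg corr to structure}}$ a small, universal constant, using $s>2^{2/\gamma}$ (first remark) and the fact that $s^\gamma/\ln s$ is bounded below on $\{s\ge 2^{2/\gamma}\}$. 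Finally $(Y(v))^*_{c_{\ref{l: orderstat unionbound}}m}>0$ gives $x\ne0$.

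I expect the main obstacle to be the first of these two estimates, $\sqrt p\,\alpha\le\exp(-2(n/m)^\gamma)(Y(v))^*_{c_{\ref{l: orderstat unionbound}}m}$: one must make the correlation scale $\alpha$ — which is only polynomially small — smaller than the ``bin width'' $\exp(-2(n/m)^\gamma)(Y(v))^*$, and it is exactly here that the inequality $(n/m)^\gamma>4$ distilled from the hypothesis is indispensable. A subsidiary nuisance is keeping $\Prob(\mathcal{G})$ bounded away from $0$ even though $\beta$ is only polynomially small; this is why the bound $\ell_0\le 4n/m$ (hence $p^2\beta\le m/(16n)$) is built into the statement.
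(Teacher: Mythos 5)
Your proposal is correct and follows essentially the same route as the paper: form a unit linear combination of $\ell_0-1$ independent copies of the uniform random normal chosen so that the values at the representatives $u_\ell$ coincide, use the strong-correlation bound plus Markov to cluster more than $V/2$ coordinates near that common value at scale $O(\sqrt{p}\,\alpha)$, and invoke Propositions~\ref{l: orderstat unionbound} and~\ref{p: orderstat combined} to certify that the resulting vector in $E^\perp$ is neither very steep nor admits the clustering at a negligible scale. The only differences are bookkeeping (your per-$U_\ell$ Markov bound and explicit good event versus the paper's ``at least half of the indices'' formulation, and your homogeneous-system choice of the coefficient vector versus the paper's full-rank/kernel dichotomy), and your reading of the final display as ``there exists $\lambda$'' matches what the paper actually proves.
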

\begin{proof}
As before let
 $Y^1,\dots,Y^{\ell_0-1}$ be independent copies of the vector $Y$.
For any realization of $Y^1,\dots,Y^{\ell_0-1}$, let
$X=(X_1,X_2,\dots,X_{\ell_0-1})\in\C^{\ell_0-1}$ be a unit complex vector satisfying
$$
\forall \ell < \ell_0\, : \quad   \sum\limits_{k=1}^{\ell_0-1} X_k Y^k_{u_\ell}=\xi
$$
for some $\xi\in\C$
(we recall that $(u_\ell)_{\ell\ge 1}$ is the sequence of indices which was defined together with
the sequence of subsets $(U_\ell)_{\ell\ge 1}$). The vector $X$ can be found as follows: if the matrix $(Y^k_{u_\ell})_{1\leq k,\ell< \ell_0}$ is of full rank, then take the unique solution of the corresponding system with $\xi=1$ and normalize it, otherwise take any unit vector in the kernel of the matrix as $X$.
Set $Z:=\sum\limits_{k=1}^{\ell_0-1} X_k Y^k$.
Observe that deterministically
$$
   Z_{u_1}=Z_{u_2}=\dots=Z_{u_{\ell_0-1}}=\xi.
$$
We then have
\begin{align*}
\Prob&\Big\{|Z_j-Z_{u_1}|\geq \alpha \ell_0\mbox{ for at least half of indices }j\in \bigcup_{\ell=1}^{\ell_0-1}U_\ell\Big\}\\
&\leq \sum_{\ell=1}^{\ell_0-1}\Prob\big\{
|Z_j-Z_{u_\ell}|\geq \alpha \ell_0\mbox{ for at least half of indices }j\in U_\ell
\big\}\\
&\leq
\sum_{\ell=1}^{\ell_0-1}\sum_{k=1}^{\ell_0-1}\Prob\big\{
|Y^k_j-Y^k_{u_\ell}|\geq \alpha\;\mbox{for at least $|U_\ell|/(2\ell_0)$ indices }j\in U_\ell
\big\}\\
&\leq \sum_{\ell=1}^{\ell_0-1}\sum_{k=1}^{\ell_0-1}\, \frac{2\ell_0}{|U_\ell|}\, \sum_{j\in U_\ell} \Prob\big\{
|Y^k_j-Y^k_{u_\ell}|\geq \alpha\big\}
\leq 2{\ell_0}^3\,\beta,
\end{align*}
where the first inequality follows by the union bound; the second one by a combination
of the triangle inequality, the fact that $|X|=1$, and the union bound; the third one
from Markov's inequality; and the last one from the definition of $(\alpha,\beta)$-strongly
correlated pairs.
This together with the assumptions on $\ell_0$ and $\beta$ implies
$$\Prob\Big\{\exists\;\lambda\in\C:\;
|Z_j-\lambda|\leq \alpha \ell_0\mbox{ for more than $V/2$ indices }j\in [n]\Big\}\geq 1-2{\ell_0}^3\,\beta\geq 1/2.$$
On the other hand, applying Propositions~\ref{l: orderstat unionbound} and \ref{p: orderstat combined}
with $p=\ell_0-1$ we obtain that with probability at least $1-3/\sqrt{n}$
one has
$$Z^*_{c_{\ref{p: orderstat combined}}m}\geq c_{\ref{p: orderstat combined}}m/n$$
and
$$
  \forall i \leq  c_{\ref{l: strg corr to structure}}m  :\,  \, \, \,
  Z^*_i\leq C_{\ref{p: orderstat combined}} \ell_0\sqrt{\ln (n \ell_0/i)}.
$$
Intersecting the previous events
we deduce that there exists a realization of $Z$ satisfying
$$
 \Big|\Big\{i\leq n:\,|Z_i-\lambda|\leq \frac{\alpha \ell_0 n}{c_{\ref{p: orderstat combined}}m}
 \, Z^*_{c_{\ref{p: orderstat combined}}m}\Big\}\Big|>V/2
$$
for some $\lambda\in\C$, and,  using that $\ell_0\leq n/\dim E^\perp$, we have
$$
   \forall i \leq  c_{\ref{l: strg corr to structure}}m  :\,  \, \, \,
   Z_i^*\leq \frac{C_{\ref{p: orderstat combined}} n\,
  \ell_0\sqrt{\ln (n \ell_0/i)}}{c_{\ref{p: orderstat combined}}m}\,
  Z^*_{c_{\ref{p: orderstat combined}}m}
  \leq 0.9\, \Big(\frac{n}{i}\Big)^3 Z^*_{c_{\ref{p: orderstat combined}}m}.
$$
To complete the proof, note that
$$
   \frac{\alpha \ell_0 n}{c_{\ref{p: orderstat combined}}m}\leq
   \exp\Big(- 2\big(n/m\big)^\gamma\Big)
$$
for an appropriate choice of the constant $C_{\ref{l: strg corr to structure}}$.
\end{proof}

\subsection{Proof of Theorem~\ref{th: random normal structure}}

Let $d, n,  z, \gamma, I, E, G, A_n$ be as in the statement of Theorem~\ref{th: random normal structure}
and $Y$ as above.
We may assume without loss of generality that $\dim E= \vert I\vert$ a.s.,
otherwise, we complement $E$
to form a subspace $E_0$  of dimension $\vert I\vert$. In this case orthogonality to $E_0$ will imply orthogonality to $E$,
therefore the proof below won't be affected. Denote
%
%
\begin{align*}
&b=\frac{|I^c|}{\sqrt{2}}, \,   s=\frac{1}{\sqrt{2}}\Big(\frac{|I^c|}{n}\Big)^{3/2},  \,
 \alpha=\exp\Big(-C_{\ref{l: strg corr to structure}} \Big(\frac{n}{\vert I^c\vert}\Big)^\gamma\Big), \,
\beta=\frac{1}{4}\Big(\frac{|I^c|}{4 n}\Big)^3,  \,  V=2n\Big(\frac{\vert I^c\vert}{n}\Big)^{\gamma/2},
\end{align*}
where $C_{\ref{l: strg corr to structure}}$ is the constant  from Lemma~\ref{l: strg corr to structure}.
Let the sequence $(U_\ell)_{\ell\ge 1}$ be constructed as above.
Let $\ell_0\geq 1$ be the smallest index such that $|U_{\ell_0}|\leq b$. Since $(|U_\ell|)_{\ell\ge 1}$ is not increasing, then $\ell_0\leq 2n/b$.
Notice that $(U_\ell)_{\ell\ge 1}$ and $\ell_0$ inherits randomness only from $E$.
Let $\Event$ be the event (depending only on $E$) that
$$
  \Big|\bigcup_{\ell=1}^{\ell_0-1}U_\ell\Big|> V.
$$
Lemma~\ref{l: strg corr to structure} implies that
$\Event \subset \Event_1^c$,
where  $\Event_1$ denotes
the event appearing in Theorem~\ref{t: kernel restated}.
Denoting by $\Event_2$
the event of Theorem~\ref{th: random normal structure} and applying Theorem~\ref{t: kernel restated},
we get
$$
\Prob(\Event_2) \leq \Prob(\Event_2\cap \Event^c)
+\Prob(\Event)\leq \Prob(\Event_2\cap\Event^c)
+1/n.
$$
Now note that once in $ \Event^c$, one could take the set $\widetilde J$
in $\Event_2$ to be $\bigcup_{\ell=1}^{\ell_0-1}U_\ell$, which is of size
smaller than $V$. Therefore,
$$
\Prob(\Event_2\cap \Event^c)
\leq \Prob\Big\{\exists \lambda\in\C:\;\Big|\Big\{j\in \bigcup_{\ell\ge \ell_0}
U_\ell:\;|Y_j-\lambda|\leq \exp\Big(-C_{\ref{th: random normal structure}}\Big(\frac{n}{\vert I^c\vert}\Big)^\gamma\Big)\Big\}\Big|\geq 2b\Big\}.
$$
Since $n/\vert I^c\vert\geq d^3$ and $d$ is large enough, there exists
a sufficiently large absolute constant $C_{\ref{th: random normal structure}}$
satisfying
$$
  \exp\Big(-C_{\ref{th: random normal structure}}\Big(\frac{n}{\vert I^c\vert}
   \Big)^\gamma\Big)\leq \alpha\,s/\sqrt{16\ln (1/\beta)}.
$$
 Note that $|U_{\ell_0}|\leq b$. Therefore by Lemma~\ref{l: Uells}, we obtain
\begin{align*}
\Prob(\Event_2\cap \Event^c)
&\leq \Prob\Big\{\exists \lambda\in\C:\;\Big|\Big\{j\in \bigcup_{\ell\ge \ell_0}
U_\ell:\;|Y_j-\lambda|\leq \alpha\,s/\sqrt{16\ln (1/\beta)}\Big\}\Big|\geq 2b\Big\}\\
&\leq \frac{(sn)^2}{2b^2}=\frac{\vert I^c\vert}{2n},
\end{align*}
Putting together the above estimates and using that $\frac{1}{n}\leq  \frac{\vert I^c\vert}{2n}$ completes the proof.
\qed


\section{Intermediate singular values}\label{s: intermediate}

The goal of this section is to establish the  bounds on the intermediate singular values stated in the
introduction. We first briefly show how to derive the estimates on the singular values far from the
lower edge of the spectrum. As mentioned in the introduction, these follow from the work of Cook
\cite{Cook-circ}. The majority of the section is devoted to the more challenging regime,
that is, to bounding the singular values closer to the edge.

\subsection{Higher end of the spectrum}

Following the comparison strategy described in the introduction,
the following proposition was proved by Cook \cite[Proposition~7.3]{Cook-circ}.

\begin{prop}[{Anti-concentration of the spectrum}]\label{p:prop-cook-anti}
Assume $d=o(\sqrt{n})$ and $d\to \infty$ with $n$.
Then with probability at least $1-C_0\exp(-n)$ for all $\eta\in(0,1]$ one has
$$
  \nu_{B_z}
  ([0,\eta])< C_0(\eta+d^{-1/48}),
$$
where $C_0$ is an absolute positive constant.
\end{prop}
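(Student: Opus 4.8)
The plan is to realize the comparison strategy described in the introduction. First I would replace the uniform model by the directed Erd\H{o}s--R\'enyi model: let $\mathcal B$ be the $n\times n$ matrix with i.i.d.\ $\mathrm{Bernoulli}(d/n)$ entries; then $\A$ is distributed as $\mathcal B$ conditioned on the event $\{\mathcal B\in\Mc\}$. By the asymptotic enumeration of sparse $d$-regular digraphs (McKay--Wormald \cite{MW03}) one has $\Prob\{\mathcal B\in\Mc\}\ge\exp(-C_1 n\ln d)$ for an absolute constant $C_1$. Consequently, any event determined by the matrix that holds for $\mathcal B$ with probability at least $1-\exp(-(C_1+2)n\ln d)$ holds for $\A$ with probability at least $1-\exp(-n)$, since $d\to\infty$. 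Hence it suffices to establish the asserted bound for the shifted Bernoulli matrix $\widetilde B_z:=d^{-1/2}\mathcal B-z\,\idmat$ in place of $B_z$, but with the much stronger failure probability $\exp(-(C_1+2)n\ln d)$.

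For the i.i.d.\ model I would argue as follows. It is enough to prove, for each fixed $\eta$ in a geometric grid $\{2^{-j}:0\le j\le C\ln n\}\cup\{0\}$, that $\nu_{\widetilde B_z}([0,\eta])<C_0(\eta+d^{-1/48})$: the union over the $O(\ln n)$ values of $\eta$ is absorbed, and monotonicity of $\eta\mapsto\nu_{\widetilde B_z}([0,\eta])$ then yields all $\eta\in(0,1]$ (the case $\eta=0$ amounting to a bound on the corank of $\mathcal B$). Fix $\eta$ and suppose $\nu_{\widetilde B_z}([0,\eta])\ge\delta$ with $\delta\ge C_0 d^{-1/48}$. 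By Courant--Fischer this produces a subspace $H$ with $\dim H\ge\delta n$ on which $\widetilde B_z$ acts with norm at most $\eta$; a standard reduction (choosing a coordinate subset onto which $H$ projects isomorphically, or passing to distances of rows to subspaces) relates this to the near-invertibility of random submatrices of $\widetilde B_z$. I would then split unit vectors according to whether they are compressible or incompressible in the sense of Rudelson--Vershynin: compressible/sparse directions are handled by a small-ball estimate for $\|\widetilde B_z x\|_2$ together with an $\varepsilon$-net of small cardinality, while for incompressible $x$ one writes $\|\widetilde B_z x\|_2^2=\sum_i|\langle R_i(\widetilde B_z),x\rangle|^2$ and applies a Littlewood--Offord/Berry--Esseen anti-concentration bound to each sparse row (the Bernoulli parameter $d/n$ being exactly what produces the additive error $d^{-1/48}$), followed by tensorization and a union bound over a net of the relevant combinatorial data. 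The deterministic shift $-z\,\idmat$ only translates the conditional means of the row sums and does not affect the anti-concentration step, so $z\in\C$ may be arbitrary. This is in essence the argument of \cite[Proposition~7.3]{Cook-circ} (itself building on results on structured i.i.d.\ matrices), and in the regime $\delta\ge d^{-1/48}$ it delivers a failure probability of order $\exp(-\omega(n\ln d))$.

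The main obstacle is quantitative: the i.i.d.\ estimate has to be established with a failure probability strong enough to survive the conditioning onto $\Mc$, i.e.\ beating $\Prob\{\mathcal B\in\Mc\}^{-1}=\exp(\Theta(n\ln d))$. This is feasible here precisely because we only require that a subspace of dimension at least of order $d^{-1/48}n$ fail to be nearly annihilated; for singular values within $o(nd^{-1/48})$ of the lower edge no i.i.d.\ estimate of comparable strength is available --- and it is unclear whether one can exist --- which is exactly the gap that Theorem~\ref{th: inter-sv} is designed to fill. Finally, Cook states his proposition under the extra hypothesis that $d$ is polylogarithmic in $n$; the adjustments needed to run the above for every $d\to\infty$ are minor and are recorded in Remark~\ref{rem: cook-smaller d} in Section~\ref{s: intermediate}.
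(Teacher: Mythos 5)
Your first step (realizing $\A$ as a Bernoulli$(d/n)$ matrix conditioned on $d$-regularity, with the conditioning cost $\exp(\Theta(n\ln d))$ controlled by the McKay--Wang enumeration) is exactly the transfer used in the paper, which in fact does not reprove this proposition at all: it cites \cite[Proposition~7.3]{Cook-circ} and only records, in Remark~\ref{rem: cook-smaller d}, the single modification needed to drop the hypothesis $d\geq\ln^4 n$ (replacing Cook's estimate for $\Prob\{\mathcal B\in\Mc\}$ by the McKay--Wang bound).

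The genuine gap is in your i.i.d.\ step. To survive the conditioning you must prove the Bernoulli estimate with failure probability $\exp(-\omega(n\ln d))$, but the compressible/incompressible decomposition with row-wise Littlewood--Offord bounds and tensorization structurally cannot reach that level. For a fixed direction, tensorizing a single-row small-ball bound over all $n$ rows gives at best $p^n$ with $p\gtrsim d^{-1/2}$ (the Berry--Esseen error for a row with $\approx d$ nonzero entries), i.e.\ $\exp(-\tfrac12 n\ln d)$, which does not beat $\Prob\{\mathcal B\in\Mc\}\approx\exp(-(1+o(1))n\ln d)$; and the incompressible/distance part of the Rudelson--Vershynin scheme uses the randomness of one row at a time and yields failure probabilities of order $\exp(-c\delta n)$ or worse, nowhere near superexponential. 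This is precisely the obstruction the introduction of the paper describes (``satisfactory estimates \dots can be obtained only if very strong estimates are available in the Bernoulli setting, which hold with probability at least $1-\exp(-\omega(n))$''), and it is why Cook's argument is of a completely different nature: the event transferred through the conditioning is a deviation of a \emph{linear eigenvalue statistic} of the Hermitization, which is a Lipschitz function of the $n^2$ independent entries and therefore concentrates at rate $\exp(-c\varepsilon^2 n^2)$ (Guionnet--Zeitouni-type inequality, Cook's Lemma~9.1); the mean is then identified by comparing Stieltjes transforms with the Gaussian model (Lemma~8.2) and invoking intermediate singular value bounds for shifted Gaussian matrices (Lemma~8.4). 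Your closing assertion that the invertibility-style argument ``delivers a failure probability of order $\exp(-\omega(n\ln d))$'' is exactly the point that fails; without replacing that step by the spectral-statistics concentration route, the proof does not close.
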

Based on this, it is easy to derive Proposition~\ref{prop2: cook-anti}.

\begin{proof}[Proof of Proposition~\ref{prop2: cook-anti}]
For  $i\leq n-2C'\, nd^{-1/48}$, set $\eta_i:=(n-i)/(2C'n)\geq d^{-1/48}$.
 Proposition~\ref{p:prop-cook-anti} applied with $\eta=\eta_i$ implies that with probability
 $1-\exp(-n)$, for any $i\leq n-2C_0\, nd^{-1/48}$ the number of singular values
smaller than $\eta_i$ is less than $2C_0\eta_i n$.
This yields that $s_i=s_{n-2C_0\eta_in} \geq \eta_i$.
Setting $C=2C'$ and $c=1/(2C_0)$ we complete the proof.
\end{proof}

\begin{rem}\label{rem: cook-smaller d}
Proposition~\ref{p:prop-cook-anti} was stated in \cite{Cook-circ} (see Proposition~7.3 there)
for $d\geq \ln^4 n$. Let us indicate the necessary changes needed to cover our range of interest,
that is, $d=o(\sqrt{n})$ and $d\to \infty$ with $n$ (without the restriction $d\geq \ln^4 n$).
Its proof combines three lemmas (see Lemmas~8.1, 8.2 and 8.4 in \cite{Cook-circ}). Lemma~8.4
establishes bounds on the intermediate singular values for shifts of Gaussian matrices and does not demand
$d$ to be polylogarithmic in $n$. \cite[Lemma~8.2]{Cook-circ} compares the expectation of the Stieltjes transforms
of the Bernoulli model (with parameter $d/n$) with its Gaussian counterpart. Here as well,
no restriction on $d$ is required and one only needs that $d\to \infty$ with $n$ for the
approximation to be effective. The last piece of the procedure, Lemma~8.1, compares
the uniform $d$-regular model with the Bernoulli matrix. Its proof  uses  a general
concentration inequality for linear eigenvalue statistics of Hermitian random matrices
\cite[Lemma~9.1]{Cook-circ} and an estimate of the probability that a Bernoulli matrix with parameter
$d/n$ is $d$-regular \cite[Lemma~9.2]{Cook-circ}. The latter indeed requires $d\geq \ln^4n$ as stated,
since it covers also large values of $d$. Since in our regime we suppose that $d=o(\sqrt{n})$,
we could replace the estimate of Lemma~9.2 by a bound proved by McKay and Wang \cite{MW03}, which
is also mentioned in Remark~9.3 in \cite{Cook-circ}. This implies the validity of Lemma~8.1 for
any $d=o(\sqrt{n})$ with the term $\exp(-O(d^{2/3}n\ln n))$ in the probability bound replaced with $\exp(-O(n\ln d))$.
This affects the proof of Proposition~7.3 in a trivial way, as one would change the choice of
$\varepsilon$ there to be $(\ln d /d)^{1/4}$ and carries the remaining part of the proof in
exactly the same manner. We note that the same change in Lemma~8.1 is sufficient to extend the proof
of Proposition~\ref{p:prop-conv-esv} to the wider range of $d$.
\end{rem}

\subsection{Lower end of the spectrum}

We first relate the intermediate singular values to separation estimates between the rows of the matrix.
As an important technical ingredient, we use the so-called negative second moment identity,
which was employed earlier in papers on the circular law (see \cite{TV10,Cook-circ}).
We note that one could also use the restricted invertibility principle instead (see \cite{NY}).

\begin{lemma}\label{l: Step II new}
Let $B$ be an $n\times n$ complex random matrix
with a distribution invariant under permutation of rows.
Let $m\leq n$ be positive integers and $\rho,\delta>0$ be such that
$$\Prob\big\{\d\big(\row_m(B),\spn_{j\leq m-1}\{\row_j(B)\}\big)<\rho\big\}\leq \delta.$$
Then for every $1\leq L\leq \frac{1}{2\delta}$ one has
$$\Prob\big\{s_{(1-2L\,\delta) m}(B)\geq \rho\sqrt{L\,\delta}\big\}\geq 1-\frac{1}{L}.$$
\end{lemma}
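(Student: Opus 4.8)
The plan is to reduce the estimate to a separation statement for a well-chosen sub-collection of rows, and then to combine the negative second moment identity with the interlacing of singular values under deletion of rows.

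First I would exploit the permutation invariance of the distribution of $B$. The hypothesis says that $\row_m(B)$ lies within distance $\rho$ of $\spn_{j\leq m-1}\{\row_j(B)\}$ with probability at most $\delta$; applying the transposition of the indices $i$ and $m$ shows that for every $i\leq m$ one also has $\Prob\{\d(\row_i(B),H_i)<\rho\}\leq\delta$, where $H_i:=\spn\{\row_j(B):j\in[m]\setminus\{i\}\}$. Let $N$ denote the number of ``bad'' indices $i\leq m$, i.e.\ those with $\d(\row_i(B),H_i)<\rho$. Then $\Exp N\leq m\delta$, so Markov's inequality gives $\Prob\{N\geq L\delta m\}\leq 1/L$. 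From now on I work on the complementary event $\{N<L\delta m\}$ and let $I\subset[m]$ be the random set of good indices, so that $|I|=m-N\geq m-\lfloor L\delta m\rfloor$.

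On this event the rows $\{\row_j(B):j\in I\}$ are linearly independent: for $i\in I$ we have $\spn\{\row_j(B):j\in I\setminus\{i\}\}\subseteq H_i$, hence $\d\big(\row_i(B),\spn\{\row_j(B):j\in I\setminus\{i\}\}\big)\geq\rho>0$. Let $B_I$ be the $|I|\times n$ submatrix of $B$ with rows indexed by $I$. The negative second moment identity (see \cite{TV10,Cook-circ}) then yields
$$
\sum_{i\in I}\frac{1}{s_i(B_I)^2}=\sum_{i\in I}\frac{1}{\d\big(\row_i(B),\spn\{\row_j(B):j\in I\setminus\{i\}\}\big)^2}\leq\frac{|I|}{\rho^2}\leq\frac{m}{\rho^2}.
$$
Put $\tau:=\rho\sqrt{L\delta}$. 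If more than $mL\delta$ of the singular values $s_i(B_I)$ were smaller than $\tau$, the left-hand side would exceed $mL\delta\,\tau^{-2}=m/\rho^2$, a contradiction; hence at most $\lfloor mL\delta\rfloor$ of them are smaller than $\tau$, and therefore $s_k(B_I)\geq\tau$ for every integer $k\leq|I|-\lfloor mL\delta\rfloor$.

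Finally, $B_I$ is obtained from $B$ by deleting rows, so the interlacing of singular values gives $s_k(B)\geq s_k(B_I)$ for all $k\leq|I|$. Since $|I|-\lfloor mL\delta\rfloor\geq m-2\lfloor mL\delta\rfloor\geq(1-2L\delta)m$ and the left-hand side is an integer not exceeding $|I|$, it dominates $\lceil(1-2L\delta)m\rceil$, and we obtain $s_{\lceil(1-2L\delta)m\rceil}(B)\geq\tau=\rho\sqrt{L\delta}$ (the non-integer index in the statement is handled by this rounding, which is harmless; the hypothesis $L\leq 1/(2\delta)$ is used only to guarantee $(1-2L\delta)m\geq 0$). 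Combining with $\Prob\{N\geq L\delta m\}\leq 1/L$ from the first step completes the proof. The routine part is this bookkeeping; the one genuinely substantive choice is to apply the negative second moment identity to the sub-collection $B_I$ of well-separated rows rather than to the full block of $m$ rows — for the latter the ``bad'' distances can be arbitrarily small and the identity carries no information — and then to push the resulting lower bound from $B_I$ back to $B$ via interlacing.
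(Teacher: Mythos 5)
Your proof is correct and follows essentially the same route as the paper's: permutation invariance plus Markov's inequality to extract a large subset $I$ of rows each at distance at least $\rho$ from the span of the others, the negative second moment identity applied to the submatrix $B_I$, and interlacing to transfer the bound back to $B$. The only cosmetic difference is in the final counting step (you count how many singular values of $B_I$ can fall below $\rho\sqrt{L\delta}$, while the paper bounds a tail sum of $s_j(B_I)^{-2}$ from below), which is equivalent.
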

\begin{proof}
For each $i\leq m$, let $\chi_i$ be the characteristic function of the event
$$\big\{\d\big(\row_i(B),\spn_{j\in[m]\setminus\{i\}}\{\row_j(B)\}\big)<\rho\big\}.$$
By the conditions of the lemma (including the permutation invariance),
we have $\Exp\chi_i\leq \delta$, hence, by Markov's inequality, the event
$$\Event:=\Big\{\sum_{i=1}^m\chi_i> L\,\delta m\Big\}$$
has probability at most $1/L$.
Conditioning on the complement $\Event^c$,
we can find a set of indices $I\subset[m]$ of cardinality at least $m-L\,\delta m$
such that for every $i\in I$ one has
$$\d\big(\row_i(B),\spn_{j\in[m]\setminus\{i\}}\{\row_j(B)\}\big)\geq\rho.$$
Passing to the $|I|\times n$ submatrix $B'$ with rows
$\row_j(B)$, $j\in I,$
we obviously have for $i\leq |I|$,
$$\d\big(\row_i(B'),\spn_{j\neq i}\{\row_j(B')\}\big)\geq\rho.$$
Applying the negative second moment identity (see, e.g., \cite[Lemma~A.4]{TV10}), we obtain
$$
  \sum_{j=1}^{|I|}s_j(B')^{-2}=\sum_{j=1}^{|I|}
   \d\big(\row_i(B'),\spn_{j\neq i}\{\row_j(B')\}\big)^{-2}  \leq |I|\rho^{-2}.
$$
Therefore,
$$
 L\,\delta m\,s_{m-2L\,\delta m}(B')^{-2}\leq
 \sum_{j=m-2L\,\delta m}^{m-L\,\delta m}s_j(B')^{-2}
 \leq \sum_{j=1}^{|I|}s_j(B')^{-2}
 \leq m \rho^{-2},
$$
which implies
$$
 s_{m-2L\,\delta m}(B')\geq \rho\sqrt{L\,\delta}.
$$
Clearly, we deterministically have
$$s_{m-2L\,\delta m}(B)\geq s_{m- 2L\,\delta m}(B').$$
Thus, $s_{m-2L\,\delta m}(B)\geq \rho\sqrt{L\,\delta}$ everywhere on $\Event^c$,
which yields the desired result.
\end{proof}

We now provide bounds on the distances under consideration.

\begin{lemma}\label{l: step I}
Let $d, n$ be large enough integers such that $d\leq \ln^{96}n$,
$z\in \C$ be such that $\vert z\vert \leq \sqrt{d}\ln  d$,  $\gamma=1/288$,  and
$\sigma_n$ denote the uniform random permutation on $[n]$ independent of $\A$.
 Then for every
$$n-d^{-3} n\leq i\leq n- 2 n/\ln^{\gamma^{-1}} n$$ one has
\begin{align*}
 \Prob\Big\{&\d\big(\row_{\sigma_n(i)}(B_z),\, \spn_{j\leq i-1}
 \big\{\row_{\sigma_n(j)}(B_z)\big\}\big)
 <\exp\Big(-C\,\Big(\frac{n}{n-i}\Big)^{\gamma}\Big)\Big\}\leq C\, \frac{n-i}{n},
\end{align*}
where
$C$ is a positive universal constant.
\end{lemma}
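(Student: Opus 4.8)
The plan is to reduce the distance in question to an inner product with a uniform random normal, apply the structural result of Theorem~\ref{th: random normal structure} to conclude that this normal is ``unstructured,'' and then run an anti-concentration argument exploiting the conditional distribution of a single row of $B_z$. First I would set $I:=\{\sigma_n(j):\,j\leq i-1\}$, so that $|I^c|=n-i+1\asymp n-i$, and observe that the range of $i$ forces $n/\ln^{1/\gamma}n\lesssim |I^c|\lesssim n/d^3$, exactly the regime where Theorem~\ref{th: random normal structure} applies (after checking the permutation invariance lets us condition on a fixed realization of $\sigma_n$). Writing $E=E(B_z,I)$, I would use the identity
$$
\d\big(\row_{\sigma_n(i)}(B_z),E\big)^2=\Vert \Proj_{E^\perp}\row_{\sigma_n(i)}(B_z)\Vert_2^2=\Exp_G\,\big|\langle \Proj_{E^\perp}G,\,\row_{\sigma_n(i)}(B_z)\rangle\big|^2,
$$
and then invoke Gaussian concentration (or a simple two-sided bound on $\Vert\Proj_{E^\perp}G\Vert_2^2$, which is a $\chi^2$ with $\dim E^\perp\asymp n-i$ degrees of freedom) to say that with probability at least $1-\exp(-c(n-i))$ over $G$, the distance is comparable to $|\langle Y,\row_{\sigma_n(i)}(B_z)\rangle|/\sqrt{n-i}$ for a typical $Y=\Proj_{E^\perp}G$. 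So it suffices to lower bound $|\langle Y,\row_{\sigma_n(i)}(B_z)\rangle|$ on a large-probability event.

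Next I would bring in Theorem~\ref{th: random normal structure}: on an event of probability at least $1-|I^c|/n$ in the product space, for every small exceptional set $\widetilde J$ (of size up to $2(|I^c|/n)^{\gamma/2}n$) and every $\lambda\in\C$, the set of coordinates $j\notin\widetilde J$ with $|Y_j-\lambda|\leq \exp(-C(n/|I^c|)^\gamma)$ has size at most $|I^c|$. In words, $Y$ has many well-separated coordinate levels after discarding a small set. The crucial point is that $\row_{\sigma_n(i)}(B_z)$ is essentially the $\sigma_n(i)$-th row of $d^{-1/2}A_n$ shifted by $-z$ on the diagonal; conditioned on a realization of all the other rows (which determine $E$), this row is still random---it is the indicator of the out-neighborhood of vertex $\sigma_n(i)$, which, conditionally on the rest of the graph, picks $d$ coordinates with some non-degenerate conditional distribution. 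I would then quantify anti-concentration for the conditional random variable $\langle Y,\row_{\sigma_n(i)}(B_z)\rangle = d^{-1/2}\sum_{j\in \mathcal{N}}Y_j - z\,Y_{\sigma_n(i)}$ where $\mathcal{N}$ is the (conditionally random) out-neighborhood: since $Y$ has many levels, selecting $d$ of its coordinates lands the partial sum in any fixed small ball only with small probability. This is precisely the ``anti-concentration machinery'' the authors promise to develop in this subsection, so I would cite/use it here once it is in place; the bound should give probability $O((n-i)/n)$ of landing in a ball of the stated radius $\exp(-C(n/(n-i))^\gamma)\cdot\sqrt{n-i}$, which after dividing by $\sqrt{n-i}$ absorbs into the final estimate $\exp(-C'(n/(n-i))^\gamma)$.

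The main obstacle, as the introduction itself flags, will be the anti-concentration step for the conditionally dependent row. Unlike the i.i.d.\ case, one cannot just say ``the entries are independent Bernoulli and $Y$ is spread, so apply a Littlewood--Offord bound.'' Instead I would need to control the conditional distribution of the out-neighborhood $\mathcal{N}$ of vertex $\sigma_n(i)$ given the rest of the $d$-regular graph---showing that it is sufficiently ``smeared'' across coordinate levels of $Y$ that no single atom of mass $\lambda$ captures more than $|I^c|$ of those coordinates (which is exactly what Theorem~\ref{th: random normal structure} guarantees about $Y$). Concretely I expect one needs a switching-type argument: given two vertices in different levels of $Y$, a single edge switch remains inside $\Mc$, so the conditional law of $\mathcal{N}$ cannot be concentrated on one level, and this forces the partial sum $d^{-1/2}\sum_{j\in\mathcal{N}}Y_j$ to have spread $\gtrsim \exp(-C(n/(n-i))^\gamma)$. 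Assembling a union bound over the (at most $|I^c|+1$ many, hence harmless) choices of which coordinate is removed due to the diagonal shift, and combining the Gaussian-concentration event, the structural event, and the anti-concentration event---each of probability $1-O((n-i)/n)$ or better---then yields the claimed bound $C(n-i)/n$ after adjusting constants.
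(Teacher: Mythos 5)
Your high-level skeleton --- pass to the random normal $Y=\Proj_{E^\perp}G$ via \eqref{l: line projection}, invoke Theorem~\ref{th: random normal structure} to make $Y$ unstructured, then anti-concentrate $\langle Y,\row_{\sigma_n(i)}(B_z)\rangle$ using the conditional randomness of the row --- is indeed the paper's strategy. But the step you yourself flag as the main obstacle is left essentially unresolved, and the way you propose to set it up would fail. You suggest conditioning on ``a realization of all the other rows (which determine $E$)'' and then using the conditional law of the out-neighbourhood of $\sigma_n(i)$. If ``all the other rows'' means all $n-1$ remaining rows, that conditional law is degenerate (the column-sum constraints force the last row); if it means only the $i-1$ rows spanning $E$, the conditional law of row $\sigma_n(i)$ is an intractable marginal over all $d$-regular completions, and no clean single-switching argument applies to it directly. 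The paper's resolution is a genuinely different device: it couples $A_n$ with a resampled matrix $A_n'$ in which a \emph{random auxiliary block} of $\lfloor n^{1/4}\rfloor$ rows (the set $W=\{\sigma_n(j)\}_{j=q}^{i-1}$ with $q=i-\lfloor n^{1/4}\rfloor$) is redrawn together with row $u=\sigma_n(i)$. Conditioned on everything else and on the typical event that the supports of these $\lfloor n^{1/4}\rfloor+1$ rows of $M$ are pairwise disjoint, switchings show that the resampled row's support is \emph{uniform} on a set of size at least $dn^{1/4}$ (Lemmas~\ref{l: I disjoint}--\ref{l: XY coupling}), and this law is coupled with $d$ i.i.d.\ uniform indices; Proposition~\ref{prop: anti-c} then performs the anti-concentration for that explicit distribution. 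Your proposal contains no substitute for this product structure, which is the heart of the proof.

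A second, consequential omission: because the resampling alters the rows $\sigma_n(j)$, $q\le j\le i-1$, which lie inside the span you project onto, the argument can only lower-bound the distance of the resampled row to $E_q=\spn_{j<q}\{\row_{\sigma_n(j)}(B_z)\}$, not to $\spn_{j\le i-1}$; the lemma as stated is then recovered by exchangeability, relabelling $i$ as $q$. Your plan bounds the distance to the full span directly, which this machinery does not deliver. Two smaller slips: $\langle Y,x\rangle=\langle G,\Proj_{E^\perp}x\rangle$ is a single complex Gaussian of standard deviation $\|\Proj_{E^\perp}x\|_2=\d(x,E)$, so there is no $\sqrt{n-i}$ normalisation (the paper uses \eqref{l: line projection} with $t=\ln(n/(n-i))$, costing only a factor absorbed into the exponential); and the diagonal shift is absorbed by the uniformity in $\lambda$ of Proposition~\ref{prop: anti-c} (take $\lambda=d^{1/2}\langle y,\row_u(z\idmat)\rangle$), not by a union bound over coordinates.
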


\smallskip

In order to prove this lemma, we will develop specific anti-concentration
tools in the next subsection. We postpone its proof to the end of this
section and provide now the proof of Theorem~\ref{th: inter-sv}.

\begin{proof}[Proof of Theorem~\ref{th: inter-sv}]
Let $n-d^{-3} n\leq m\leq n- 2 n/\ln^{\gamma^{-1}} n$ and let $\sigma_n$, $B_z$, $C$
 be as in
Lemma~\ref{l: step I}. Then
we have
\begin{align*}
\Prob\Big\{&\d\big(\row_{\sigma_n(m)}(B_z),\, \spn_{j\leq m-1}
\big\{\row_{\sigma_n(m)}(B_z)\big\}\big)
<\exp\Big(-C\, \Big(\frac{n}{n-m}\Big)^{\gamma}\Big)\Big\}\leq C\, \frac{n-m}{ n}.
\end{align*}
 Let the matrix $B$ be obtained from the matrix $B_z$ by permuting its rows according to
 $\sigma _n$. Then  $B$ has the same singular values as $B_z$ and
the distribution of $B$  is invariant under permutation of rows.
 Therefore, we can apply Lemma~\ref{l: Step II new}  with
$$
 \rho=\rho(m)=\exp\Big(-C \Big(\frac{n}{n-m}\Big)^{\gamma}\Big), \quad
  \delta=\delta(m)=C\, \frac{n-m}{n} \quad \mbox{ and } \quad
  L=\frac{1}{2\sqrt{C \delta}}
$$
 to get that
$$
  \Prob\Big\{s_{(1-\sqrt{\varepsilon}) m}(B)\geq \Big(\frac{n-m}{4 n}\Big)^{1/2}
  \exp\Big(-C\Big(\frac{n}{n-m}\Big)^{\gamma} \Big)  \Big\}\geq 1-2\, C\, \sqrt{\frac{n-m}{n}},
$$
where we also denoted $\varepsilon= (n-m)/n$.
Using that  $\Big(\frac{n-m}{4 n}\Big)^{1/2}\geq \exp\Big(-\Big(\frac{n}{n-m}\Big)^{\gamma}\Big)$ when $d$
 is large enough (recall, $m\geq n-n/d^3$), we deduce that for an appropriate absolute constant $C_1>0$,
$$
  \Prob\Big\{s_{(1-2\sqrt{\varepsilon}) n}(B)\geq \exp\Big(-C_1/\varepsilon^\gamma \Big)  \Big\}\geq 1-2\,
  C \sqrt{\varepsilon},
$$
where we also used that $(1-\sqrt{\varepsilon}) m\geq (1-2\sqrt{\varepsilon}) n$. Writing $k=(1-2\sqrt{\varepsilon}) n$
(with slight adjustment to make it integer), so that $\eps = (n-k)^2/(2n)^2$, we clearly have
$$
 n-2d^{-3/2}n\leq k\leq n- 2\sqrt{2}n/\ln^{144} n.
$$
Using that $\sigma _n$ is independent of $A_n$,  $B$ and $B_z$ have the same
singular values, and that $(s_i)_i$ is increasing, we obtain the desired result.
\end{proof}

\subsection{Anti-concentration}\label{s: anti-conc}

To state the main result of the subsection, we need to define a special distribution
on the set of $n$-dimensional $0/1$ vectors.
For any matrix $M\in\Mc$ and for any non-empty subset $T\subset[n]$
denote
 by $\mathbb M_{M,T}$ the set of all matrices $M'$ in $\Mc$ satisfying
$$
  \mathbb M_{M,\sigma}:= \big\{ M'\in\Mc \, : \, \,
  \row_i(M')=\row_i(M) \quad \mbox{ for all } \quad
   i\notin T  \big\}.
$$

Now, fix $J\subset[n]$ of cardinality at least $n/2$.
In this section, we denote  by
$
 \mathcal I = \mathcal I (J)
$
 a uniform random subset of $J$
with cardinality $\lfloor n^{1/4}\rfloor$.
Next, fix an index $u$ and a matrix $M\in\Mc$ and define a random vector $X_{M,J,u}$
via its conditional distribution with respect to $\mathcal I$; namely, we postulate that,
conditioned on a realization $I_0$ of the set $\mathcal I$, the vector $X_{M,J,u}$ takes values in the set
$$
 Q_{M,J,u}:=\{\row_{u}(M'):\,M'\in\mathbb M_{M,I_0\cup\{u\}}\}
$$
and
$$
  \forall x\in Q_{M,J,u}\, : \, \, \, \Prob\big\{X_{M,J,u}=x\,|\,\mathcal I=I_0\big\}
  =\frac{|\{M'\in \mathbb M_{M,I_0\cup\{u\}}:\,\row_{u}(M')=x\}|
  }{|\mathbb M_{M,I_0\cup\{u\}}|}.
$$

\begin{prop}\label{prop: anti-c}
Let $d, n$ be large enough positive integers such that $d\leq n^{1/8}$.
Let $J$ be a subset of $[n]$ of cardinality at least $n/2$, $u\in[n]\setminus J$, and let $M$ be a fixed matrix in $\Mc$.
Further, let $\delta,\rho>0$,  $y$ be a fixed vector in $\C^n$ such that for some subset $\widetilde J\subset[n]$
we have
$$\forall\,\lambda\in\C:\quad \big|\big\{j\in [n]\setminus \widetilde J:\, |y_j-\lambda|\leq \rho\big\}\big|\leq \delta n.$$
Then,
$$
  \forall\,\lambda\in\C:\quad \Prob\big\{|\langle y,X_{M,J,u}\rangle-\lambda|\leq \rho/4\big\}
  \leq \big(8|\widetilde J|/n\big)^d+144\delta+n^{-1/10}.
$$
\end{prop}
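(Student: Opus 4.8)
The plan is to condition on the random index set $\mathcal I$ — which turns the strongly dependent vector $X:=X_{M,J,u}$ into an honest uniform random $d$-subset of a small controlled ground set — then to use the randomness of $\mathcal I$ to push the unstructuredness hypothesis on $y$ down to that ground set, and finally to run a leave-one-out argument. \textbf{Step 1 (reduction to a uniform $d$-subset).} Fix $\lambda\in\C$; we may assume $|\widetilde J|\le n/8$ (otherwise $(8|\widetilde J|/n)^{d}\ge 1$) and $n$ large. Condition on a realization $\mathcal I=I_0$. For $j\in[n]$ let $c_j$ be the number of ones of $M$ in column $j$ among the rows indexed by $I_0\cup\{u\}$; then $\mathbb M_{M,I_0\cup\{u\}}$ consists exactly of the matrices obtained from $M$ by replacing the block of rows $I_0\cup\{u\}$ by an arbitrary $0/1$ block with all row sums $d$ and column sums $(c_j)_j$. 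Call $I_0$ \emph{generic} if the supports $\supp \row_i(M)$, $i\in I_0\cup\{u\}$, are pairwise disjoint; then $c_j\in\{0,1\}$, the set $T^*=T^*(I_0):=\bigcup_{i\in I_0\cup\{u\}}\supp \row_i(M)$ has exactly $d(|I_0|+1)$ elements, and a matrix in $\mathbb M_{M,I_0\cup\{u\}}$ is nothing but an ordered partition of $T^*$ into $|I_0|+1$ labeled blocks of size $d$ (the block of a row being its support). Hence, conditionally on a generic $\mathcal I=I_0$, the set $\supp X$ is a uniform random $d$-subset $S$ of $T^*$, so $\langle y,X\rangle=\sum_{j\in S}y_j$. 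Since $M$ is $d$-regular, double counting gives $\sum_{i<i'\le n}|\supp \row_i(M)\cap\supp \row_{i'}(M)|=n\binom{d}{2}$ and $\sum_{i\ne u}|\supp \row_i(M)\cap\supp \row_u(M)|=d(d-1)$; since $\mathcal I$ is a uniform $\lfloor n^{1/4}\rfloor$-subset of $J$ with $|J|\ge n/2$ and $d\le n^{1/8}$, the total support-overlap among the rows indexed by $\mathcal I\cup\{u\}$ is a nonnegative integer with expectation $O(n^{-1/4})$, so by Markov's inequality $\mathcal I$ is generic with probability $\ge 1-O(n^{-1/4})$, which will be absorbed into the $n^{-1/10}$ term.

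\textbf{Step 2 (pushing the unstructuredness to $T^*$).} Note $|T^*|\approx d\,n^{1/4}\ll n$, so the hypothesis on $y$ does not by itself control $y$ on $T^*$; here one uses that $T^*$ is random. It suffices to control, for each $j_0\in[n]$, the set $\mathrm{Bad}_{j_0}:=\{j\in[n]:|y_j-y_{j_0}|\le\rho\}$, which by hypothesis (applied with $\lambda=y_{j_0}$) meets $[n]\setminus\widetilde J$ in at most $\delta n$ points: indeed any $\rho/4$-ball meeting $\{y_j:j\in T^*\setminus\widetilde J\}$ lies inside some $\mathrm{Bad}_{j_0}$. Now $|T^*\cap(\mathrm{Bad}_{j_0}\setminus\widetilde J)|\le d+\sum_{i\in\mathcal I}|\supp \row_i(M)\cap(\mathrm{Bad}_{j_0}\setminus\widetilde J)|$, and, by $d$-regularity of $M$ in the columns lying in $\mathrm{Bad}_{j_0}\setminus\widetilde J$, the $\mathcal I$-expectation of the sum is at most $\frac{|\mathcal I|}{|J|}\cdot d\,\delta n\le 2|\mathcal I|\,d\,\delta$. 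A standard concentration inequality for sums of bounded functions under sampling without replacement, combined with a union bound over the $n$ choices of $j_0$, then shows that with probability $\ge 1-O(n^{-1})$ over $\mathcal I$ one has $|T^*\cap(\mathrm{Bad}_{j_0}\setminus\widetilde J)|\le\eps_1|T^*|$ for every $j_0$, with $\eps_1=2\delta+o(1)$; the same argument with $\mathrm{Bad}_{j_0}\setminus\widetilde J$ replaced by $\widetilde J$ gives $|T^*\cap\widetilde J|\le 2|T^*|(|\widetilde J|/n+o(1))$ with high probability. Call a generic $I_0$ also satisfying these two bounds \emph{good}; non-good $I_0$ contribute a further $O(n^{-1/4})$, again absorbed.

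\textbf{Step 3 (leave-one-out and conclusion).} Fix a good $I_0$ and bound $\Prob\{|\langle y,X\rangle-\lambda|\le\rho/4\}$, where $S=\supp X$ is uniform among $d$-subsets of $T^*$. First, $\Prob\{S\subseteq\widetilde J\}\le\binom{|T^*\cap\widetilde J|}{d}\big/\binom{|T^*|}{d}\le(2|T^*\cap\widetilde J|/|T^*|)^d$, which by goodness is $\le(8|\widetilde J|/n)^d$ when $|\widetilde J|/n$ is not too small and is $\le n^{-1/10}$ otherwise — this is where $d\to\infty$ does the work. On the complementary event $S\not\subseteq\widetilde J$, pick $j^*$ uniformly in $S\setminus\widetilde J$; a direct computation of the joint law of $(A,j^*)$ with $A:=S\setminus\{j^*\}$ shows that, conditionally on $A$, the point $j^*$ is uniform over $T^*\setminus(\widetilde J\cup A)$, a set of size $\ge|T^*|-|T^*\cap\widetilde J|-d\ge c|T^*|$ for an absolute $c>0$ (using goodness, $|\widetilde J|\le n/8$ and $d=|T^*|/(|I_0|+1)\le|T^*|/4$). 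Writing $\langle y,X\rangle-\lambda=y_{j^*}-\lambda_A$ with $\lambda_A:=\lambda-\sum_{j\in A}y_j$, the cluster bound of Step~2 yields
$$\Prob\big\{|\langle y,X\rangle-\lambda|\le\rho/4\,\big|\,A\big\}=\frac{\big|\{j\in T^*\setminus(\widetilde J\cup A):|y_j-\lambda_A|\le\rho/4\}\big|}{|T^*\setminus(\widetilde J\cup A)|}\le\frac{\eps_1|T^*|}{c|T^*|}=\frac{\eps_1}{c}.$$
Averaging over $A$ and collecting the contributions of non-good $\mathcal I$, of the event $\{S\subseteq\widetilde J\}$, and of this last bound (recall $\eps_1=2\delta+o(1)$) gives, for $n$ large, the claimed estimate $(8|\widetilde J|/n)^d+144\delta+n^{-1/10}$, with ample room in the constant $144$.

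\textbf{Expected main obstacle.} The heart of the argument — and the point needing the most care — is the interaction of Steps~1 and~2: that conditioning on a generic $\mathcal I$ turns $X$ into an honest uniform $d$-subset of $T^*$, together with the fact that $|T^*|\ll n$, which forces one to exploit the randomness of $\mathcal I$ (rather than that of $X$) to transport the \emph{global} unstructuredness of $y$ to the \emph{small} random set $T^*$. Once this is set up, the leave-one-out computation and the sampling-without-replacement concentration estimates are routine.
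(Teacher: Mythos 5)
Your proof is correct and reaches the same quantitative conclusion, but the technical execution differs from the paper's in three genuine ways. (i) To identify the conditional law of $\row_u$ given a generic $\mathcal I=I_0$ as a uniform $d$-subset of $T^*=\bigcup_{i\in I_0\cup\{u\}}\supp\row_i(M)$, the paper runs a simple-switching bijection (its Lemma~\ref{lem: dist-X-MJu}); your observation that the replaced block is exactly an ordered partition of $T^*$ into labeled $d$-blocks gives the same statement more directly. (ii) For the anti-concentration itself, the paper first couples the uniform $d$-subset with an i.i.d.\ sample $\xi_1,\dots,\xi_d$ (Lemma~\ref{l: XY coupling}, costing an extra $n^{-1/8}$), splits on whether $\xi$ meets $S_0$, and uses independence to freeze all but one $\xi_m$; you stay with the uniform subset and instead verify by an exchangeability computation that, conditioned on $A=S\setminus\{j^*\}$, the left-out index $j^*$ is uniform on $T^*\setminus(\widetilde J\cup A)$ --- the key point being that $|(a\cup\{v\})\setminus\widetilde J|$ does not depend on $v$, which you should state explicitly, as it is the one place the argument could silently fail. (iii) To transport the spread hypothesis on $y$ from $[n]$ to the small random set $T^*$, the paper partitions $\C$ into nine translated grids of $\rho$-balls (Lemma~\ref{l: acnc partition}) and bounds $|U_{ij}\cap\bigcup_{i\in I_0}\supp\row_i(M)|$ for each cell via a Bernoulli-ization of $\mathcal I$ plus a chromatic-number (Brooks' theorem) decoupling; you union-bound over the $n$ clusters $\{j:|y_j-y_{j_0}|\le\rho\}$ and control the row-sum $\sum_{i\in\mathcal I}|\supp\row_i(M)\cap L|$, to which sampling-without-replacement concentration applies directly with increments bounded by $d$, bypassing both the plane partition and the coloring argument. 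Your route is more self-contained; the details that still need writing out are the explicit Bernstein bound (deviation $2\delta|T^*|+O(d\ln n)$ with failure probability $n^{-2}$ per center, so that $\eps_1\le 4\delta+O(\ln n/n^{1/4})$ and the final constant stays well under $144$), and the elementary inequality $(a+b)^d\le (2a)^d+(2b)^d$ needed to recover the term $(8|\widetilde J|/n)^d$ exactly rather than $(8|\widetilde J|/n+o(1))^d$; both are routine.
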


\medskip

To prove this proposition we need several lemmas.

\begin{lemma}\label{l: I disjoint}
Let $d, n$ be large enough positive integers such that $d\leq n^{1/8}$
and $M\in \Mc$ be a fixed matrix.
Further, let $J\subset[n]$ be a fixed subset of cardinality at least $n/2$,  $u\in [n]\setminus J$ and
$\mathcal I=\mathcal I (J)$.
Then with probability at least $1-2 n^{-1/4}$ the supports of the rows $\row_i(M)$, $i\in\mathcal I\cup\{u\}$,
are pairwise disjoint.
\end{lemma}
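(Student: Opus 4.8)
The plan is to prove this by a first--moment (union bound) argument, the essential point being that membership in $\Mc$ forces not only the rows but also the \emph{columns} of $M$ to have exactly $d$ ones. Write $k:=\lfloor n^{1/4}\rfloor$ and, for $i\in[n]$, set $S_i:=\supp \row_i(M)$, so that $|S_i|=d$ for every $i$ and — this is where column regularity enters — each fixed column index lies in $S_i$ for exactly $d$ values of $i$. Call an unordered pair $\{i,j\}$ a \emph{collision} if $S_i\cap S_j\neq\emptyset$. The event we want to control is ``$\mathcal I\cup\{u\}$ contains a collision''.

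First I would count collisions deterministically. For a fixed column $\ell\in[n]$ there are at most $\binom d2$ pairs of rows whose supports both contain $\ell$, so summing over the $n$ columns (each collision being counted at least once) the number of colliding pairs with both indices in $J$ is at most $n\binom d2\le nd^2/2$. Separately, the number of $i\in J$ for which $\{i,u\}$ is a collision is at most $\sum_{\ell\in S_u}|\{i:\ell\in S_i\}|\le d\cdot d=d^2$.

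Next I would pass to the random set $\mathcal I$. Since $\mathcal I$ is a uniform $k$--subset of $J$ and $|J|\ge n/2\ge k$, for a fixed pair $\{i,j\}\subset J$ we have $\Prob\{\{i,j\}\subset\mathcal I\}=\frac{k(k-1)}{|J|(|J|-1)}\le (k/|J|)^2\le 4n^{-3/2}$, and for a fixed $i\in J$, $\Prob\{i\in\mathcal I\}=k/|J|\le 2n^{-3/4}$. By linearity of expectation,
\[
\Exp\bigl(\text{number of collisions in }\mathcal I\cup\{u\}\bigr)\ \le\ \tfrac{nd^2}{2}\cdot 4n^{-3/2}+d^2\cdot 2n^{-3/4}\ =\ 2d^2n^{-1/2}+2d^2n^{-3/4}.
\]
Using the hypothesis $d\le n^{1/8}$, i.e.\ $d^2\le n^{1/4}$, the right-hand side is $O(n^{-1/4})$, and a routine tightening of the constants yields the bound $2n^{-1/4}$ asserted in the lemma (in the applications $d$ is polylogarithmic, so this bound holds with enormous slack). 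Markov's inequality then shows that with probability at least $1-2n^{-1/4}$ the set $\mathcal I\cup\{u\}$ contains no collision, i.e.\ the supports $S_i$, $i\in\mathcal I\cup\{u\}$, are pairwise disjoint.

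I do not expect any genuine obstacle here: the lemma is an elementary counting statement. The one thing not to overlook is that the double $d$--regularity of $M$ is doing the real work — it is the bound on the \emph{column} sums that keeps the number of colliding pairs at the level $O(nd^2)$, so that a random subset of $J$ of size $n^{1/4}$ misses all of them; without it the statement would simply be false.
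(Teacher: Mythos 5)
Your proposal is correct and follows essentially the same route as the paper: both are first-moment/union-bound arguments that use the column $d$-regularity of $M$ to bound the number of colliding row pairs by $O(nd^2)$ (you count $\binom{d}{2}$ per column; the paper counts at most $d^2$ partners per row — the same double count), then multiply by the probability $O(n^{-3/2})$ that a fixed pair lands in the uniform $\lfloor n^{1/4}\rfloor$-subset $\mathcal I$, treating pairs involving $u$ separately. The constants work out to $2n^{-1/4}$ in both cases given $d\le n^{1/8}$ and $|J|\ge n/2$.
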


\begin{proof}
Denote by $Q\subset (J\cup\{u\})\times (J\cup\{u\})$ the subset of all pairs $(i,j)$ such that
$$
 \supp\row_i(M)\cap\supp\row_j(M)\neq\emptyset.
$$
By $d$-regularity we observe that for any $i\in J\cup\{u\}$ there are less than $d^2$ indices $j$
with $(i,j)\in Q$. Thus, $|Q|\leq d^2 (|J|+1)$. On the other hand, an easy computation shows that
for any pair $(i_1, i_2)\in Q$ with $i_1\neq i_2$, the probability that
both $i_1$ and $i_2$ belong to $\mathcal I$, is equal to
$${|J|+1-2\choose \lfloor n^{1/4}\rfloor-2}\,{|J|+1\choose \lfloor n^{1/4}\rfloor}^{-1}
=\frac{\lfloor n^{1/4}\rfloor\, (\lfloor n^{1/4}\rfloor-1)}{|J|(|J|+1)}.$$
Hence,
$$
  \Prob\{\mathcal I\mbox{ contains a disjoint pair in }Q\}\leq |Q|\sqrt{n}/(|J|(|J|+1))\leq d^2\sqrt{n}/|J|.
$$
The assumptions on $|J|$ and $d$ imply the result.
\end{proof}

\smallskip

\begin{lemma}\label{l: I intersection}
Let $d\leq n$ be large enough positive integers
and $M\in \Mc$ be a fixed matrix. Further, let $J\subset[n]$ be subset of cardinality at least $n/2$, and let
$\mathcal I=\mathcal I(J)$.
Then for every subset $L\subset[n]$ with probability at least $1-1/n^2$ we have
$$
  \Big|\Big(\bigcup\limits_{i\in\mathcal I}\supp\row_i(M)\Big)\cap L\Big|\leq
  14 d^2 \ln n+4 d n^{-3/4}|L|.
$$
\end{lemma}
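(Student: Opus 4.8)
The plan is to fix an arbitrary subset $L\subset[n]$ and control the random quantity $\big|\big(\bigcup_{i\in\mathcal I}\supp\row_i(M)\big)\cap L\big|$ by a union/counting argument over the $\lfloor n^{1/4}\rfloor$ random indices forming $\mathcal I$. For each fixed index $i\in J$, the set $\supp\row_i(M)$ has exactly $d$ elements by $d$-regularity, so $|\supp\row_i(M)\cap L|\le d$ deterministically; summing over $i\in\mathcal I$ gives a trivial bound $d\cdot\lfloor n^{1/4}\rfloor\le d n^{1/4}$, but that is too weak for small $L$. Instead I would write the quantity as a sum of indicators: for each column index $\ell\in L$, let $N_\ell:=|\{i\in J:\ell\in\supp\row_i(M)\}|$ be the number of rows (among those indexed by $J$) whose support contains $\ell$. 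By $d$-regularity of the \emph{columns}, $\sum_{\ell\in[n]}N_\ell\le \sum_{\ell}d=dn$ and in fact $N_\ell\le d$ for every $\ell$. The target quantity is at most $\sum_{\ell\in L}\mathbf 1\{\mathcal I\cap\{i\in J:\ell\in\supp\row_i(M)\}\neq\emptyset\}$.

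The key step is a Chernoff/Bernstein-type tail bound. Fix $\ell\in L$; since $\mathcal I$ is a uniform $\lfloor n^{1/4}\rfloor$-subset of $J$ with $|J|\ge n/2$, the probability that $\mathcal I$ hits the set of $N_\ell\le d$ special rows is at most $\lfloor n^{1/4}\rfloor\cdot N_\ell/|J|\le 2dn^{1/4}/n = 2d n^{-3/4}$. Summing this expectation bound over $\ell\in L$ gives
$$
\Exp\Big[\Big|\Big(\bigcup_{i\in\mathcal I}\supp\row_i(M)\Big)\cap L\Big|\Big]\le 2d n^{-3/4}|L|.
$$
To upgrade the expectation bound to a high-probability statement, I would exploit negative association of the indicators $\mathbf 1\{i\in\mathcal I\}$ (sampling without replacement from $J$), which is inherited by the monotone ``hitting'' indicators, and apply a Chernoff bound for negatively associated variables. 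Writing $Z$ for the sum of these $|L|$ hitting indicators, the tail $\Prob\{Z\ge t\}$ for $t$ exceeding a constant multiple of $\max(\Exp Z, \ln n)$ is at most $n^{-2}$; concretely, a bound of the form $Z\le c_1 d^2\ln n + c_2 d n^{-3/4}|L|$ with the stated constants $14$ and $4$ follows by choosing the deviation level to absorb both the expectation $2dn^{-3/4}|L|$ and the $\ln n$ term needed for the $n^{-2}$ failure probability; the factor $d^2$ rather than $d$ in front of $\ln n$ gives a comfortable margin and matches what the later application needs.

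An alternative, perhaps cleaner, route avoids concentration entirely for the ``large $|L|$'' case and handles only ``small $|L|$'' via a crude union bound: if the $\mathcal I$-supports were pairwise disjoint (which by Lemma~\ref{l: I disjoint} holds with high probability after enlarging the index set, though here we need it for $\mathcal I$ alone), then $|\bigcup_{i\in\mathcal I}\supp\row_i(M)|= d\lfloor n^{1/4}\rfloor$, and one bounds the intersection with $L$ by a first-moment computation on which of these $d\lfloor n^{1/4}\rfloor$ support elements land in $L$. I expect the main obstacle to be obtaining the $\ln n$ term with the right failure probability $n^{-2}$ uniformly, i.e.\ justifying the Chernoff bound in the without-replacement (negatively associated) setting rather than the independent Bernoulli setting; this is standard but must be invoked carefully, and the constants $14$ and $4$ in the statement are chosen precisely so that the routine Bernstein estimate closes with room to spare. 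Once the tail bound for fixed $L$ is in hand, the statement is exactly the ``for every subset $L$'' formulation with the per-$L$ probability $1-1/n^2$, so no further union bound over $L$ is needed.
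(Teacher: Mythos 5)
There is a genuine gap at the concentration step. Your plan is to sum, over $\ell\in L$, the hitting indicators $H_\ell=\mathbf 1\{\mathcal I\cap R_\ell\neq\emptyset\}$ with $R_\ell:=\{i\in J:\ell\in\supp\row_i(M)\}$, and to claim that negative association of the membership indicators $\mathbf 1\{i\in\mathcal I\}$ is ``inherited by the monotone hitting indicators.'' That inheritance only holds for nondecreasing functions of \emph{pairwise disjoint} blocks of the base variables, and the sets $R_\ell$ are not disjoint: whenever a single row's support contains two indices $\ell,\ell'\in L$, the events $\{H_\ell=1\}$ and $\{H_{\ell'}=1\}$ are positively correlated (in the extreme case $R_\ell=R_{\ell'}$ they coincide). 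So the Chernoff/Bernstein bound for negatively associated variables cannot be invoked for the family $(H_\ell)_{\ell\in L}$ as a whole, and the expectation bound $2dn^{-3/4}|L|$ alone does not give the $1-n^{-2}$ tail.

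This is exactly the obstruction the paper's proof is built around, and it explains the $d^2$ in front of $\ln n$ that you treated as mere slack. The paper first partitions $L$ into at most $d^2$ classes $L_k$ using Brooks' theorem applied to the graph on $L$ in which $\ell\sim\ell'$ iff some row's support contains both (its maximum degree is $<d^2$ by $d$-regularity); within each class the sets $R_\ell$ are pairwise disjoint. It then replaces the uniform $\lfloor n^{1/4}\rfloor$-subset by i.i.d.\ Bernoulli$(\lfloor n^{1/4}\rfloor/|J|)$ inclusion, under which the per-class indicators become genuinely independent, applies Bernstein in each class to get the $2\delta|L_k|+14\ln n$ bound with failure probability $n^{-5}$, union-bounds over the $d^2$ classes (the source of $14d^2\ln n$), and finally de-Poissonizes by conditioning on $|\widetilde{\mathcal I}|=\lfloor n^{1/4}\rfloor$, an event of probability at least $n^{-1/4}$, which still leaves failure probability at most $n^{-2}$. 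A legitimate alternative closer to your setup would be to replace the hitting count by the linear majorant $\sum_{i\in\mathcal I}|\supp\row_i(M)\cap L|$, a sum over the sample of deterministic weights bounded by $d$ with total $\sum_{i\in J}|\supp\row_i(M)\cap L|\le d|L|$, to which Bernstein/Hoeffding for sampling without replacement does apply; but as written your argument does not establish the claimed tail bound.
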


\begin{proof}
Fixing a partition $(L_k)_{k=1}^{d^2}$ of $L$ such that for every $k\leq d^2$
and $i\neq j\in L_k$, there is no row of $M$ such that $i,j$ are simultaneously contained in its support.
Such a partition can be constructed as follows: take an auxiliary graph $\Gamma$ on $L$ without loops such that
$i\ne j\in L$ are connected by an edge whenever
there is a row of $M$ whose support contains both $i$ and $j$.
The $d$-regularity immediately implies that the
maximum vertex degree of this graph is strictly less than $d^2$
(in fact, not greater than $d(d-1)$). Therefore, by Brook's theorem,  the chromatic number of $\Gamma$
does not exceed $d^2$, which justifies the number of sets in the required partition of $L$.

Further, let $\widetilde{\mathcal I}$ be a random subset of $J$, such that each index $i\in J$
is included into $\widetilde {\mathcal I}$ with probability $\lfloor n^{1/4}\rfloor/|J|$
independently of the others.
Fix for a moment  $k\leq d^2$.
For any $i\in L_k$, let $\eta_i^k$ be the indicator function of the event that
$$
 i\in \bigcup\limits_{j\in\widetilde{\mathcal I}}\supp\row_j(M).
$$
Note that by our construction $(\eta_i^k)_{i\in L_k}$ are jointly independent and that for all $i\in L_k$
$$
 \Exp\,\eta_i^k=\Exp\,(\eta_i^k)^2=\Prob\{\eta_i^k=1\}\leq d n^{1/4}/|J|:=\delta.
$$
Applying Bernstein's inequality with $t=\delta |L_k| + 14 \ln n$, we obtain
\begin{align*}
\Prob\Big\{\Big|L_k\cap \bigcup\limits_{j\in\widetilde{\mathcal I}}\supp\row_j(M)\Big| &\geq
2\delta |L_k| + 14 \ln n\Big\}
\leq \Prob\Big\{\sum_{i\in L_k} (\eta_i^k -\Exp\, \eta_i^k) \geq  t\Big\}\\
\\&\leq
\exp\Big(-\frac{3 t^2}{2(t + 3\delta |L_k|)}\Big)\leq \exp\Big(-\frac{3 t}{8}\Big)\leq n^{-5}.
\end{align*}
Then the union bound implies that with probability at least $1-d^2 n^{-5}$ one has
$$
 \Big|L\cap \bigcup\limits_{j\in\widetilde{\mathcal I}}\supp\row_j(M)\Big|\leq
  \sum\limits_{k=1}^{d^2}\Big( 14  \ln n+\frac{2dn^{1/4}|L_k|}{|J|}\Big)=14 d^2 \ln n + \frac{2dn^{1/4}|L|}{|J|}.
$$
Finally note that the cardinality of $\widetilde {\mathcal I}$
equals exactly $m:=\lfloor n^{1/4}\rfloor$ with probability
$$
 {|J|\choose m}
 \Big(\frac{m}{|J|}\Big)^{m}\,
 \Big(1- \frac{m}{|J|}\Big)^{|J|-m}\geq \Big(1- \frac{m}{|J|}\Big)^{|J|}\geq
 \exp(- 2m)\geq n^{-1/4}.
$$
Therefore
$$
  \Prob\Big\{\Big|L\cap \bigcup\limits_{j\in\widetilde{\mathcal I}}\supp\row_j(M)\Big|
  \leq 14 d^2 \ln n+\frac{2dn^{1/4}|L|}{|J|}\;\;\Big|\;\;|\widetilde{\mathcal I}|=\lfloor n^{1/4}\rfloor\Big\}
  \geq 1-d^2 n^{-4}\geq 1-\frac{1}{n^2},
$$
which implies the desired result, since $|J|\geq n/2$.
\end{proof}

\smallskip

\begin{lemma}\label{lem: dist-X-MJu}
Let $d<n$ be positive integers.
Let $M\in \Mc$ be a fixed matrix, $J\subset[n]$ be a subset of cardinality at least $n/2$, and
$\mathcal I=\mathcal I(J)$.
Let $u\in[n]\setminus J$ and $I_0\subset J$ of size $\lfloor n^{1/4}\rfloor$ be such that
the supports of the rows $\row_i(M)$, $i\in I_0\cup\{u\}$, are pairwise disjoint.
Then, conditioned on $\mathcal I=I_0$, the support of the random vector $X_{M,J,u}$ is
a uniformly distributed $d$-subset of
$$
  S:=\bigcup_{i\in I_0\cup\{u\}}\supp\, \row_i(M).
$$
\end{lemma}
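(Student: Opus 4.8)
The plan is to exploit the disjointness of the supports in order to decouple the combinatorial problem into independent pieces, one for each row indexed by $I_0\cup\{u\}$. First I would describe the set $\mathbb M_{M,I_0\cup\{u\}}$ explicitly. A matrix $M'$ belongs to this set precisely when $\row_i(M')=\row_i(M)$ for all $i\notin I_0\cup\{u\}$, while the rows indexed by $I_0\cup\{u\}$ may be altered, subject only to the $d$-regularity constraints on columns (the row sums are automatically $d$ since all rows in the free block still have to sum to $d$). The key observation is that, by the hypothesis, the column-support of the fixed rows avoids the set $S=\bigcup_{i\in I_0\cup\{u\}}\supp\,\row_i(M)$, and within $S$ each column $j\in S$ is hit by exactly one row of the free block (the row whose original support contained $j$). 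Consequently, the column-degree constraints on $M'$ restricted to $S$ reduce to: each column $j\in S$ must receive exactly one $1$ from the free block, and columns outside $S$ must receive no $1$'s from the free block (the fixed rows already supply the full column degree there, because the original $M$ had the free rows contributing only inside $S$).

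Next I would argue that the only freedom left is to choose, for each row $i\in I_0\cup\{u\}$, a $d$-subset of $S$ to serve as its new support, subject to the constraint that the chosen supports partition $S$ — equivalently, that they are pairwise disjoint (since there are $|I_0\cup\{u\}|=\lfloor n^{1/4}\rfloor+1$ rows, each needing $d$ entries, and $|S|=(\lfloor n^{1/4}\rfloor+1)d$, disjointness and covering are the same condition). Thus $\mathbb M_{M,I_0\cup\{u\}}$ is in bijection with the set of ordered partitions of $S$ into $\lfloor n^{1/4}\rfloor+1$ blocks of size $d$, with block indexed by each $i\in I_0\cup\{u\}$. Since the uniform distribution on $\mathbb M_{M,I_0\cup\{u\}}$ corresponds to the uniform distribution on such ordered partitions, the marginal law of the block assigned to the distinguished index $u$ — which is exactly $\supp\,\row_u(M')=\supp X_{M,J,u}$ conditioned on $\mathcal I=I_0$ — is the uniform distribution on $d$-subsets of $S$, by symmetry of the uniform measure on ordered set-partitions under permuting which elements go to block $u$.

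The main obstacle, and the step requiring the most care, is verifying that the column constraints genuinely decouple in the way claimed: one must check that after fixing all rows outside $I_0\cup\{u\}$, every column outside $S$ already has its full quota of $d$ ones coming from the fixed rows (so the free rows cannot place any $1$ outside $S$), and every column inside $S$ is one short, needing exactly one more $1$ from the free block. This follows from comparing $M'$ with the original matrix $M$: since $M$ itself lies in $\mathbb M_{M,I_0\cup\{u\}}$, the fixed rows plus the original free rows already satisfy all column constraints; the original free rows contribute exactly one $1$ to each column of $S$ (by disjointness of their supports, which is the hypothesis) and nothing outside $S$ (by definition of $S$). Hence any valid replacement of the free block must do the same column-by-column, which is precisely the partition condition. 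Once this bookkeeping is in place, the uniformity statement is immediate from the symmetry of the counting, and the lemma follows.
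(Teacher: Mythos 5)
Your proof is correct, but it takes a different route from the paper. You give an explicit global description of the fibre: after checking that the column constraints force the free rows of any $M'\in\mathbb M_{M,I_0\cup\{u\}}$ to have supports contained in $S$ and to hit each column of $S$ exactly once, you identify $\mathbb M_{M,I_0\cup\{u\}}$ with the set of ordered partitions of $S$ into $\lfloor n^{1/4}\rfloor+1$ blocks of size $d$, and uniformity of the $u$-block is then immediate by symmetry (the number of completions of a fixed $d$-subset $T$ to such a partition is $(|S|-d)!/(d!)^{\lfloor n^{1/4}\rfloor}$, independent of $T$). The paper instead runs a local \emph{simple switching} argument: for two admissible supports $x,y$ differing in one element it constructs an injection between the corresponding fibres by swapping a $2\times 2$ pattern with the unique other free row occupying the offending column, concludes $|S_x|=|S_y|$, and chains through adjacent $d$-subsets. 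Both arguments hinge on the same consequence of the disjointness hypothesis --- each column of $S$ receives exactly one $1$ from the free block, and columns outside $S$ receive none --- but your version is more explicit (it determines the full joint law of all free rows and the cardinality of the fibre), whereas the switching argument is the more robust standard tool, usable even when no clean product description of the fibre exists. One sentence of yours is wrong as stated: the column-support of the fixed rows does \emph{not} avoid $S$ (each column $j\in S$ receives $d-1$ ones from the fixed rows); what is true, and what you correctly use two paragraphs later, is that each column of $S$ needs exactly one more $1$ from the free block. Since the later, correct bookkeeping is the one your argument actually relies on, this is a slip of wording rather than a gap, but it should be fixed.
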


\begin{proof}
We  first show that for any two $0/1$ vectors $x,y$ satisfying
$$
  \supp\, x,\supp\, y\subset S,
  \, \, \,
 |\supp\, x|= |\supp\, y|=d,\, \, \, \mbox{ and }\, \, \, |\supp\, x\setminus \supp\, y|=1,
$$
the sets
$$
 S_x := \big\{M'\in \mathbb M_{M,I_0\cup\{u\}},\,\row_u(M')=x\big\}\quad \mbox{ and } \quad
 S_y:= \big\{M''\in \mathbb M_{M,I_0\cup\{u\}},\,\row_u(M'')=y\big\}
$$
have the same cardinality.
Without loss of generality, assume that $x_1=y_2=1$ and $x_2=y_1=0$. Then $\{1, 2\} \subset S$.
For every matrix $M'\in S_x$
we  construct a matrix $M''\in S_y$
as follows. Since $\{1, 2\} \subset S$ and the rows indexed by $I_0\cup\{u\}$ are pairwise disjoint,
there exists a unique index  $i=i(M')\in I_0\cup\{u\}$ such that $M_{i,1}'=0$ and
$M_{i,2}'=1$.
Let $M''$ be obtained by performing the simple switching operation on $M'$
which interchanges the entries $M_{u,1}'$ and $M_{u,2}'$ with $M_{i,1}'$ and $M_{i,2}'$ respectively.
Clearly $M''\in S_y$, moreover, it is not difficult to see that the constructed mapping is injective.
Therefore, $|S_x|\leq |S_y|$.
Reversing the argument, we get that $|S_x|= |S_y|$. Since for every $0/1$ vector $z$ satisfying
$\supp\, z\subset S$ and  $|\supp \, z|=d$ one can construct a sequence of vectors
$x_0=x, x_1, \ldots, x_k=z$ with $\supp\, x_i\subset S$,  $|\supp \, x_i|=d$, and such that two
vectors $x_{i-1}$, $x_i$ differ on exactly two coordinates for every $1<i\leq k$, we obtain $|S_x|=|S_z|$.
Thus
$$
 \Prob\big\{X_{M,J,u}=x\,|\,\mathcal I=I_0\big\}=\Prob\big\{X_{M,J,u}=z\,|\,\mathcal I=I_0\big\},
$$
which means that, conditioned on $\mathcal I=I_0$, the support of the random vector
$X_{M,J,u}$ is uniformly distributed on the set of $d$-subsets of $S$.
\end{proof}

\begin{lemma}[Coupling]\label{l: XY coupling}
Let $d, n$ be large enough positive integers such that $d\leq n^{1/8}$
and  $M\in \Mc$ be a fixed matrix. Let $J\subset[n]$ be a subset of cardinality at least $n/2$ and
$\mathcal I=\mathcal I(J)$. Assume that
$u\in[n]\setminus J$ and let $I_0\subset J$ be of size $\lfloor n^{1/4}\rfloor$ and such that the supports of rows
$\row_i(M)$, $i\in I_0\cup\{u\}$, are pairwise disjoint.
Let $\xi_1,\dots,\xi_d$ be i.i.d.\ random variables
uniformly distributed on
$$
  S:=\bigcup\limits_{i\in I_0\cup\{u\}}\supp\,\row_i(M), \quad \mbox{ and set } \quad Y_\xi:=\sum_{i=1}^d e_{\xi_i}.
$$
Then there is a coupling $(X,Y_\xi)$, with $X$ distributed as
$X_{M,J,u}$, such that, conditioned on $\mathcal I=I_0$, we have
$$
 \Prob\big\{X=Y_\xi\,|\,\mathcal I=I_0\big\}\geq 1-n^{-1/8}.
$$
\end{lemma}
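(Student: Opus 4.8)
The plan is to construct the coupling via the natural ``sample-then-patch'' device and reduce the whole statement to a birthday-problem estimate. Throughout we work conditionally on $\mathcal I=I_0$. First I would pin down the target law of $X$: since $X_{M,J,u}$ is a $0/1$ vector it coincides with the indicator of its support, and by Lemma~\ref{lem: dist-X-MJu} that support is, conditionally on $\mathcal I=I_0$, a uniformly random $d$-subset of $S$. Also, the disjointness of the supports of the rows $\row_i(M)$, $i\in I_0\cup\{u\}$, together with $d$-regularity gives $|S|=d(\lfloor n^{1/4}\rfloor+1)>d\,n^{1/4}$.

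Next, let $\Event$ be the event that $\xi_1,\dots,\xi_d$ are pairwise distinct. On $\Event$ the vector $Y_\xi=\sum_{i=1}^d e_{\xi_i}$ is precisely the $0/1$ indicator of the $d$-set $\{\xi_1,\dots,\xi_d\}$, and conditioning $d$ i.i.d.\ uniform draws from $S$ on their being distinct produces a uniformly random $d$-subset of $S$; hence the conditional law of $Y_\xi$ given $\Event$ is exactly the target law of $X_{M,J,u}$ identified above. I would therefore define the coupling by drawing $\xi_1,\dots,\xi_d$ i.i.d.\ uniformly on $S$ (which determines $Y_\xi$), setting $X:=Y_\xi$ on $\Event$, and on $\Event^c$ drawing $X$ to be the indicator of a uniformly random $d$-subset of $S$ independently of $(\xi_1,\dots,\xi_d)$. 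Since the conditional law of $X$ equals the target law both given $\Event$ and given $\Event^c$, the variable $X$ has the correct marginal.

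Finally, a union bound over pairs gives
$$
\Prob(\Event^c)\le \binom{d}{2}\frac{1}{|S|}\le \frac{d^2}{2|S|}\le \frac{d}{2n^{1/4}}\le \frac{1}{2}\,n^{-1/8},
$$
using $d\le n^{1/8}$ in the last step; and since $X=Y_\xi$ on $\Event$ this yields $\Prob\{X=Y_\xi\mid\mathcal I=I_0\}\ge\Prob(\Event)\ge 1-n^{-1/8}$, as claimed. The only point needing a moment's care is that the patched $X$ has exactly the prescribed marginal, but this is automatic once one observes that conditioning the ``all-distinct'' sampling on $\Event$ already produces the uniform $d$-subset law, so the law on $\Event^c$ need only be chosen to match it, which a fresh independent draw does. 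Thus there is no genuine obstacle here: the lemma is in essence a birthday estimate packaged as a coupling.
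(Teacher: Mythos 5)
Your proposal is correct and follows essentially the same route as the paper: identify the conditional law of $X_{M,J,u}$ as uniform over $d$-subsets of $S$ via Lemma~\ref{lem: dist-X-MJu}, observe that $Y_\xi$ conditioned on the "all distinct" event has the same law, and bound the complement by a birthday estimate using $|S|\geq d\,n^{1/4}$ and $d\leq n^{1/8}$. The only difference is that you spell out the patching on $\Event^c$ explicitly, which the paper leaves implicit.
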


\begin{proof}
Note that, conditioned on the event
$$
  \Event :=\{\forall i\ne j \, \, \mbox{ one has }\, \, \xi_i\neq \xi_j \},
$$
the random set $X:=\{\xi_1,\ldots,\xi_d\}$ is a uniformly distributed $d$-subset of $S$.
Therefore, by  Lemma~\ref{lem: dist-X-MJu}, the distribution of $X_{M,J,u}$ conditioned on
$I=I_0$
agrees with the  distribution of $Y_\xi$ conditioned on $\Event$.
Since  $\row_i(M)$, $i\in I_0\cup\{u\}$, are pairwise disjoint, we have $|S|\geq d n^{1/4}$, hence
$$
 \Prob\{\xi_i=\xi_j\, \, \mbox{ for some }\, \, i\neq j\}\leq d^2 \,
 \Prob\{\xi_1=\xi_2\}\leq d^2/ |S|\leq  n^{-1/8}.
$$
This implies the desired result.
\end{proof}

\smallskip

\begin{lemma}\label{l: acnc partition}
Let $\delta, \rho >0$, $\widetilde J\subset[n]$, and $y$ be a fixed vector in $\C^n$
such that
$$
  \forall\,\lambda\in\C:\quad \big|\big\{j\in [n]\setminus \widetilde J:\,
  |y_j-\lambda|\leq \rho\big\}\big|\leq \delta n.
$$
Then there exists a partition $(U_{ij})_{i\leq 9, \,j\leq n}$ of $[n]\setminus \widetilde J$
such that $\vert U_{ij}\vert\leq \delta n$ for all $i\leq 9,\, j\leq n$, and
$$
 \forall i\leq 9 \, \, \,\, \,\forall j\neq j'\in [n] \,\, \, \, \,\forall s\in U_{ij}  \,\, \, \, \,\forall s'\in U_{ij'}
 \, \, \, \, \, \,\, \,\quad  \vert y_s-y_{s'}\vert\geq \rho.
$$
\end{lemma}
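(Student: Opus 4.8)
The plan is to reduce the statement to a coloured tessellation of the plane. Identifying $\C$ with $\R^2$, I would tessellate it by the half-open axis-parallel squares $Q_{a,b}:=[a\rho,(a+1)\rho)\times[b\rho,(b+1)\rho)$ with $a,b\in\Z$, and assign to $Q_{a,b}$ the ``colour'' $(a\bmod 3,\,b\bmod 3)\in\{0,1,2\}^2$, which I identify with an index $i\in\{1,\dots,9\}$. For each colour $i$, only finitely many squares of that colour contain a value $y_s$ with $s\in[n]\setminus\widetilde J$ --- at most $|[n]\setminus\widetilde J|\le n$ of them --- so I can enumerate these in any fixed order, pad the list with empty sets to have exactly $n$ entries, and let $U_{ij}$ be the set of indices $s\in[n]\setminus\widetilde J$ whose value $y_s$ lies in the $j$-th square of colour $i$. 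Since every square carries exactly one colour and each $y_s$ lies in exactly one square, $(U_{ij})_{i\le 9,\,j\le n}$ is automatically a partition of $[n]\setminus\widetilde J$.

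It then remains to verify the two quantitative requirements, both of which are short. For the size bound, each square $Q_{a,b}$ lies inside the closed Euclidean ball of radius $\rho/\sqrt2<\rho$ about its centre $\lambda_{a,b}$; hence $U_{ij}\subseteq\{s\in[n]\setminus\widetilde J:\,|y_s-\lambda|\le\rho\}$ for the appropriate centre $\lambda$, and the hypothesis on $y$ gives $|U_{ij}|\le\delta n$. For the separation requirement, fix $i\le 9$ and $j\ne j'$ with $U_{ij},U_{ij'}$ non-empty, corresponding to distinct squares $Q_{a,b}$ and $Q_{a',b'}$ of the same colour. Then $a\equiv a'$ and $b\equiv b'$ modulo $3$, and the squares being distinct forces $|a-a'|\ge 3$ or $|b-b'|\ge 3$; say $|a-a'|\ge 3$. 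Comparing first coordinates, any $s\in U_{ij}$ and $s'\in U_{ij'}$ satisfy $|\Re(y_s)-\Re(y_{s'})|>2\rho$, so $|y_s-y_{s'}|>2\rho\ge\rho$, as needed.

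I do not expect a genuine obstacle here: the lemma is a packing/colouring statement, and the only real design choice is the tessellation, which must be fine enough that each cell fits inside a $\rho$-ball (so the hypothesis on $y$ can be applied cell by cell) yet coarse enough, after a periodic colouring, that cells sharing a colour are $\rho$-separated. The grid of $\rho$-squares with the periodic $3\times 3$ colouring achieves both at once, and everything else is bookkeeping.
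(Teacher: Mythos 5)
Your proof is correct and follows essentially the same route as the paper's: a $\rho$-scale tessellation of $\C\cong\R^2$ with a periodic $3\times 3$ (nine-colour) colouring, so that each cell lies inside a $\rho$-ball (giving $|U_{ij}|\leq \delta n$ from the hypothesis) while same-coloured cells are $\rho$-separated. The only difference is cosmetic: you use half-open squares, which are natively disjoint, whereas the paper takes Euclidean $\rho$-balls centred at the shifted lattices $\rho(a_i+3\Z\times 3\Z)$ and removes overlaps greedily to form a partition.
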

\begin{proof}
We identify $\C$ with $\R^2$. Consider the following nine points
\begin{align*}
&a_1=(0,0),\, a_2=(1,0),\, a_3=(2,0),\,
a_4=(0,1),\, a_5=(0,2),\\
&a_6=(1,1),\, a_7=(2,1),\, a_8=(1,2),\, a_9=(2,2).
\end{align*}
 For $i\leq 9$, set
$$
 \mathcal V_i:= \rho(a_i + 3\Z\times 3\Z).
$$
Note that any two points in $\mathcal V_i$ are at distance at least $3\rho$ and that the union of
$\mathcal V_i$'s is $\C$. We first construct a partition $(\mathcal V_{ij})_{i\leq 9,\, j\in \Z^2}$ of
the complex plane as follows. First, set
$\mathcal V_{1j}$'s to be the Euclidean balls of radius $\rho$ centered at $\rho(a_1 + 3j)\in\mathcal V_1$.
Observe that the balls are necessarily pairwise disjoint.
Further, assuming that $\mathcal V_{\ell j}$, $\ell<i$, $j\in \Z^2$ are constructed (for some $1<i\leq 9$),
define $\mathcal V_{i j}$ as the set difference of the Euclidean ball of radius $\rho$ centered at $\rho(a_i + 3j)\in\mathcal V_i$,
and the union of $\mathcal V_{\ell j'}$, $\ell<i$, $j'\in \Z^2$. Then $(\mathcal V_{ij})_{i\leq 9,\, j\in \Z^2}$ is a partition and
moreover, for any $i\leq 9$ and any $j\neq j'\in \Z^2$, one has $\vert x-x'\vert \geq  \rho$
for any $x\in \mathcal V_{ij}$, $x'\in \mathcal V_{ij'}$. Indeed, this follows by an application of the triangle inequality
together with the fact that the centers of these two balls are at distance at least $3\rho$.
Therefore, one can partition the coordinates of $y$ by intersecting the above partition of $\C$ with $\{y_i\}_{i\leq n}$.
This naturally defines a partition of $[n]\setminus \widetilde J$ by setting the  sets of the partition to be the indices of the corresponding
coordinates of $y$. The assumption on $y$ implies that each set in the partition contains at most $\delta n$ elements.
\end{proof}

\begin{proof}[{Proof of Proposition~\ref{prop: anti-c}}]
Fix $\lambda\in\C$. Then
$$
 \Prob\big\{|\langle y,X_{M,J,u}\rangle-\lambda|\leq \rho/4\big\}
 \leq \sum\limits_{I_0\subset J,\atop |I_0|=\lfloor n^{1/4}\rfloor}
 \Prob\big\{|\langle y,X_{M,J,u}\rangle-\lambda|\leq \rho/4\,|\,\mathcal I=I_0\big\}\,
 \Prob\big\{\mathcal I=I_0\big\}.
$$
Let $(U_{ij})_{i\leq 9, \,j\leq n}$
be the partition of $[n]\setminus \widetilde J$ given by Lemma~\ref{l: acnc partition},
in particular $|U_{ij}|\leq \delta n$ for all $i, j$.
Let $T$ be the collection of all subsets $I_0$ of $J$ of cardinality $\lfloor n^{1/4}\rfloor$
satisfying the following three conditions:
%
%
%
%
%
%
%
%
%
\begin{equation}\label{prop-T1}
  \mbox{ the rows } \, \, \, \row_i(M),\, \, \, \mbox{ for }\, \, \,  i\in I_0\cup\{u\} \, \, \,
  \mbox{  are pairwise disjoint; }
\end{equation}
\begin{equation}\label{prop-T2}
  \Big| \widetilde J\,\cap \, \bigcup_{i\in I_0}\supp\,\row_i(M)\Big| \leq 14 d^2 \ln n+ 4 d n^{-3/4}|\widetilde J|;
\end{equation}
\begin{equation}\label{prop-T3}
  \Big|U_{ij}\, \cap \,\bigcup_{i\in I_0}\supp\,\row_i(M)\Big|\leq 14 d^2 \ln n+  4 \delta dn^{1/4}.
\end{equation}
By Lemmas~\ref{l: I disjoint}, \ref{l: I intersection} and the union bound, the event
$\{\mathcal I\in T\}$ has probability at least $1-3 n^{-1/4}$.
Thus, we have
$$\Prob\Big\{|\langle y,X_{M,J,u}\rangle-\lambda|\leq \frac{\rho}{4}\Big\}
\leq \sum\limits_{I_0\in T}
\Prob\Big\{|\langle y,X_{M,J,u}\rangle-\lambda|\leq \frac{\rho}{4}\,\,\big|\,\,\mathcal I=I_0\Big\}\,
\Prob\Big\{\mathcal I=I_0\Big\}+\frac{3}{n^{1/4}}.$$
Further, fix any $I_0$ in $T$. Let $S$,  $\xi_1,\ldots,\xi_d$, and
$Y_\xi$ be defined in Lemma~\ref{l: XY coupling}. Note that by (\ref{prop-T1}), $|S|\geq d n^{1/4}$.
Lemma~\ref{l: XY coupling} implies
$$
  \Prob\big\{|\langle y,X_{M,J,u}\rangle-\lambda|\leq \rho/4\,\, \big|\,\,\mathcal I=I_0\big\}
  \leq\Prob\big\{|\langle y,Y_\xi\rangle-\lambda|\leq \rho/4\big\}+n^{-1/8}.
$$
Denote
$$
 S_0:= \bigcup_{i\in I_0}\supp\,\row_i(M)\setminus \widetilde J, \quad  \quad S_1:= \Big(\widetilde J\, \cap\,
  \bigcup_{i\in I_0}\supp\,\row_i(M) \Big) \, \cup\, \supp\, \row_u(M) ,
$$
and
$\xi=\{\xi_1,\ldots,\xi_d\}.$
Note that by properties (\ref{prop-T1}) and (\ref{prop-T2}) and assuming that $|\widetilde J|\leq n/8$
(otherwise the bound for the probability in Proposition~\ref{prop: anti-c} is trivial), one has
\begin{equation}\label{ver}
  |S|\geq d n^{1/4}, \quad \frac{|S_1|}{|S|} \leq
   \frac{15 d\ln n}{n^{1/4}} + \frac{4 \vert \widetilde J\vert}{n}\leq
   \frac{3}{4}, \quad \mbox{ and } \quad  \frac{|S_0|}{|S|} = 1- \frac{|S_1|}{|S|}\geq \frac{1}{4}.
\end{equation}
Consider two events
$$
   \Event_1 :=\{ \xi \cap S_0 = \emptyset\} =\{\xi \subset S_1\}
   \quad \mbox{ and } \quad \Event_2 := \{\xi \cap S_0 \ne \emptyset\} .
$$
Using property (\ref{ver}) and independence of $\xi_i$'s, we have
we clearly have
$$
  \Prob(\Event_1) = \left(|S_1|/|S|\right)^d
  \leq  \Big(\frac{30 d\ln n}{n^{1/4}}\Big)^d +
  \Big(\frac{8 \vert \widetilde J\vert}{n}\Big)^d .
$$
To estimate the remaining probability
we split $\Event_2$ into  disjoint union of events
$$
    \Event_W :=\{\xi_{i}\in S_0 \, \,  \mbox{ for all } \, \,  i\in W \quad \mbox{ and } \quad
    \xi_{i}\notin S_0 \, \,  \mbox{ for all } \, \,  i\notin W\},
$$
where $W$ runs over all non-empty subsets of $[d]$. Then
$$
 \Prob\{|\langle y,Y_\xi\rangle-\lambda|\leq \rho/4\, \, \big|\, \,  E_2\}\leq
 \sup_W \Prob\{|\langle y,Y_\xi\rangle-\lambda|\leq \rho/4 \, \, \big|\, \, \Event_W\}.
$$
Fix a non-empty $W\subset [d]$ and $m\in W$. Using that $\xi_i$'s are i.i.d. we observe
that
\begin{align*}
 \Prob\{|\langle y,Y_\xi\rangle-\lambda|\leq \rho/4 \, \, \big|\, \, \Event_W\} &\leq
 \sup \limits_{\widetilde\lambda\in \C}
 \Prob\big\{
  |\langle y,e_{\xi_1}\rangle-\widetilde\lambda|\leq \rho/4 \, \, \big|\, \,    \xi_m\in S_0 \big\}
 \\ &
  =\sup \limits_{\widetilde\lambda\in \C}
 \Prob\big\{
  |\langle y,e_{\xi_1}\rangle-\widetilde\lambda|\leq \rho/4  \, \, \big|\, \,   \xi_1\in S_0 \big\}.
\end{align*}
This implies
%
%
\begin{align*}
 p_0:=  \Prob\big\{\Event_2 \, \, \mbox{ and } \, \, |\langle y,Y_\xi\rangle-\lambda|\leq \rho/4\big\}
 \leq
 \sup \limits_{\widetilde\lambda\in \C}
 \Prob\big\{
  |\langle y,e_{\xi_1}\rangle-\widetilde\lambda|\leq \rho/4  \, \, \big|\, \,   \xi_1\in S_0 \big\}.
\end{align*}
Fix $\widetilde\lambda\in \C$. By Lemma~\ref{l: acnc partition} for every $i\leq 9$ there exists at most
one $j(i)\leq n$ such that
$$
  \xi_1\in S_0 \, \, \mbox{ and } \, \, |\langle y,e_{\xi_1}\rangle-\widetilde\lambda|
   \leq \rho/4   \quad \mbox{ implies  } \quad   \xi_1 \in S_0 \cap \bigcup _{i=1}^9  U_{i j(i)}.
$$
Using this, (\ref{prop-T3}) and (\ref{ver}), we observe
$$
   p_0\leq \frac{1}{\Prob\left(\xi_1\in S_0\right)}\, \sum _{i=1}^9 \Prob\big\{\xi_1\in S_0 \cap  U_{i j(i)} \big\} \leq
  \frac{|S|}{|S_0|}\,
  \sum _{i=1}^9 \frac{ |S_0 \cap U_{i j(i)}|}{|S|} \leq \frac{540\, d \ln n}{n^{1/4}} +
   144 \delta  .
$$
Since $\Prob\big\{|\langle y,Y_\xi\rangle-\lambda|\leq \rho/4\big\}  \leq \Prob(\Event_1) + p_0$,
$d\leq n^{1/8}$, and $n$ is large enough, this completes the proof.
\end{proof}

\subsection{Distances estimates}

The goal of this subsection is to prove Lemma~\ref{l: step I}.

Fix $z\in \C$, $\gamma= 1/(288)$,  and $i\in [n]$ satisfying
$n/\ln^{\gamma^{-1}}n \leq n-i \leq d^{-3}n$.
Recall that  $\sigma_n$ denotes the uniform random
permutation on $[n]$ independent of $A_n$ and $B_z=d^{-1/2}\A-z\idmat$.
Denote  $E_i:= E(B_z, \sigma ([i-1])$, i.e., the random subspace
spanned by the rows $\row_{\sigma_n (j)}(B_z)$, ${j\leq i-1}$.

\smallskip

We now define a random triple $(A_n,A_n',\sigma_n)$ in the following way
(the choice of notation will be justified after construction).
For each matrix $M\in\Mc$ and a permutation $\sigma\in\Pi_n$ let
$$
  \mathbb M_{M,\sigma}:= \big\{ M'\in\Mc \, : \, \,
  \row_{\sigma(j)}(M')=\row_{\sigma(j)}(M) \quad \mbox{ for all } \quad
   j\not\in [i-\lfloor n^{1/4} \rfloor, \, i]   \big\}.
$$
Define the set
$$
 U:=\bigcup_{\sigma\in\Pi_n}\bigcup_{M\in\Mc}\big\{(M,M',\sigma):\,M'\in \mathbb M_{M,\sigma}\big\}.
$$
Further,  define a probability measure $\eta$ on $U$ by
$$
 \forall (M,M',\sigma)\in U  : \, \, \,\,  \eta\big(\big\{(M,M',\sigma)\big\}\big)
 =\frac{1}{n!\,|\Mc|}\frac{1}{|\mathbb M_{M,\sigma}|}.
$$
We postulate that the triple $(A_n,A_n',\sigma_n)$ takes values in $U$ and is distributed according to the measure $\eta$.
It is not difficult to see that (individual) marginal distributions of $A_n$ and $A_n'$ are uniform on $\Mc$,
and that $\sigma_n$ is uniformly distributed on $\Pi_n$. Moreover, $A_n$ and $\sigma_n$ are independent,
as well as $A_n'$ and $\sigma_n$. This justifies our choice of notation for $A_n$ and $\sigma_n$
(which otherwise would come into conflict with our ``old'' notions of $A_n$ and $\sigma_n$).
As usual, below we assume that $G$ is independent from the triple $(A_n,A_n',\sigma_n)$ and that all random
variables are defined on the same probability space.

Fix a matrix $M\in\Mc$, a subset $J\subset[n]$ of cardinality $i-1$ and an index $u\in[n]\setminus J$.
Define the event
$$\Event_{M,J,u}:=\Big\{A_n=M,\;\{\sigma_n(r):\,r\leq i-1\}=J,\;\sigma_n(i)=u\Big\}.$$
Observe that, conditioned on $\Event_{M,J,u}$, the set
$$
 W:=\{\sigma_n(j):\,j=i-\lfloor n^{1/4}\rfloor,\dots,i-1\}
$$
is a uniform random $\lfloor n^{1/4}\rfloor$-subset of $J$.
Let $W_0\subset J$ be any realization of $W$ and set
$$\Event_{M,J,u,W_0}:=\Event_{M,J,u}\cap\big\{W=W_0\big\}.$$
Conditioned on $\Event_{M,J,u,W_0}$,
$A_n'$ takes values in the set of matrices
$\mathbb M_{M,W_0\cup\{u\}}$ defined the same way as in Section~\ref{s: anti-conc},
and the $u$-th row of $A_n'$ has conditional distribution defined by
$$\Prob\big\{\row_{u}(A_n')=x\,|\,\Event_{M,J,u,W_0}\big\}
=\frac{|\{M'\in \mathbb M_{M,W_0\cup\{u\}}\,:\,\row_u(M')=x\}|}{|\mathbb M_{M,W_0\cup\{u\}}|}.$$
In other words, conditioned on $\Event_{M,J,u}$, the $u$-th row of $A_n'$ is distributed exactly the same way as the random vector
$X_{M,J,u}$ defined in Section~\ref{s: anti-conc}.
Now, let $\Event_{M,J,u}'\subset \Event_{M,J,u}$ be the event that
the uniform random normal $\Proj_{E_{i}^\perp}(G)$
satisfies the following condition:
$$
 \exists \widetilde J \subset [n] \quad \mbox{ with } \quad
  \vert \widetilde J\vert \leq 2\Big(\frac{n-i}{n}\Big)^{\gamma/2}n
  \quad \mbox{ such that }
 $$
\begin{align*}
 \forall \lambda\in\C:\,\Big|\Big\{j\in n\setminus \widetilde J:\, |\langle \Proj_{E_{i}^\perp}(G),e_j\rangle -\lambda|\leq
\exp\Big(- C_0
\Big(\frac{n}{n-i}\Big)^\gamma\Big)\Big\}\Big|\leq n-i,
\end{align*}
where
$C_0$ is the constant from Theorem~\ref{th: random normal structure}.
Note that conditioned on the event $\Event_{M,J,u}$ the subspace $E_{i}$ is completely determined by
$M$ and $J$, in particular it is fixed within the event $\Event_{M,J,u}$.
Therefore, by the independence of $G$ from the triple $(A_n,A_n',\sigma_n)$, we have that $\Proj_{E_{i}^\perp}(G)$ and the $u$-th row of $A_n'$
are independent conditioned on $\Event_{M,J,u}$.
Then, conditioning on the event $\Event_{M,J,u}'$ and denoting
$$
 B_z':=d^{-1/2}A_n'-z\idmat
$$
we apply Proposition~\ref{prop: anti-c}
with $y=\Proj_{E_{i}^\perp}(G)$ and $\lambda=d^{1/2}\langle y,\row_u(z\idmat)\rangle$,
which gives that
\begin{align}
\Prob\Big\{&|\langle \Proj_{E_{i}^\perp}(G),\row_u(B_z')\rangle|\leq
(16d)^{-1/2}\exp\Big(- C_0
\Big(\frac{n}{n-i}\Big)^\gamma\Big)\, \, \big|
\, \, \Event_{M,J,u}'\Big\}\nonumber\\
&\leq 144\,  \frac{n-i}{n} + \Big(16\, \Big(\frac{n-i}{n}\Big)^{\gamma/2} \Big)^d+n^{-1/10}\leq 145\, \frac{n-i}{n},
\label{eq: aux pquhqp3}
\end{align}
provided that $d$ is large enough.
For convenience, we denote $q:=i-\lfloor n^{1/4}\rfloor$.
Define another (the last) auxiliary event
\begin{align*}
\widetilde\Event_{M,J,u}:=
\Event_{M,J,u}\cap\Big\{&\ln (n/(n-i))\,\big\|\Proj_{E_{q}^\perp}(\row_u(B_z'))\big\|_2
\geq |\langle \row_u(B_z'),\Proj_{E_i^\perp}(G)\rangle|\Big\}.
\end{align*}
Using  the deterministic relation
$$\big\|\Proj_{E_i^\perp}(\row_u(B_z'))\big\|_2
\leq \big\|\Proj_{E_{q}^\perp}(\row_u(B_z'))\big\|_2,$$
the independence of $\row_u(A_n')$ and $\Proj_{E_{i}^\perp}(G)$ conditioned on $\Event_{M,J,u}$
and (\ref{l: line projection}) applied with $t=\ln (n/(n-i))$, we obtain
$$\Prob(\widetilde\Event_{M,J,u}\,|\,\Event_{M,J,u})\geq 1-\frac{n-i}{n},$$
and thus
$$
\Prob(\widetilde\Event_{M,J,u}^c\,|\,\Event_{M,J,u}') \leq \frac{\Prob(\Event_{M,J,u}^c\cap \Event_{M,J,u})}{\Prob(\Event_{M,J,u}')}\leq \frac{n-i}{n}\cdot\frac{\Prob(\Event_{M,J,u})}{\Prob(\Event_{M,J,u}')}.
$$
Together with \eqref{eq: aux pquhqp3} and using that
$$
  4\sqrt{d}\, \ln(n/(n-i))\leq \exp\Big( \Big(\frac{n}{n-i}\Big)^\gamma\Big)
$$
for sufficiently large $d$, we  get for an appropriate choice of the constantn $\widetilde C$ that
\begin{align*}
\Prob\Big\{&\big\|\Proj_{E_{q}^\perp}(\row_{\sigma_n(i)}(B_z'))\big\|_2\leq
\exp\Big(-\widetilde C\Big(\frac{n}{n-i}\Big)^\gamma\Big)\, \, \big|\, \, \Event_{M,J,u}'\Big\}\\
&
\leq \Prob\Big\{|\langle \Proj_{E_{i}^\perp}(G),\row_u(B_z')\rangle|\leq
c\,d^{-1/2}\exp\Big(- C_0
\Big(\frac{n}{n-i}\Big)^\gamma\Big)\, \, \big|\, \, \Event_{M,J,u}'\Big\}
+\Prob(\widetilde\Event_{M,J,u}^c\, \, \big|\, \, \Event_{M,J,u}')\\
&\leq \frac{n-i}{n}\left(145 + \frac{\Prob(\Event_{M,J,u})}{\Prob(\Event_{M,J,u}')}\right) \leq
146 \,\, \frac{n-i}{n}\,\,  \frac{\Prob(\Event_{M,J,u})}{\Prob(\Event_{M,J,u}')} .
\end{align*}
Using the independence $G$ and $(A_n, A_n', \sigma_n)$ and
applying Theorem~\ref{th: random normal structure} with
$I= E_{i}$, which is fixed within the event $\Event_{M,J,u}$,
we observe
$$
  \Prob\Big(\bigcup_{M,J,u}\Event_{M,J,u}'\Big)\geq 1- \frac{n-i}{n}.
$$
Note also that the events $\Event_{M,J,u}$ are pairwise disjoint, so that $\sum_{M,J,u}\Prob(\Event_{M,J,u})\leq 1$.
Therefore,
using  that
$\Event_{M,J,u}'\subset \Event_{M,J,u}$
we obtain
\begin{align*}
\Prob\Big\{&\big\|\Proj_{E_{q}^\perp}(\row_{\sigma_n(i)}(B_z'))\big\|_2\leq
\exp\Big(-\widetilde C\Big(\frac{n}{n-i}\Big)^\gamma\Big)\Big\}\\
&\leq \sum_{M,J,u}\Prob\Big\{\big\|\Proj_{E_{q}^\perp}(\row_{\sigma_n(i)}(B_z'))\big\|_2\leq
\exp\Big(-\widetilde C\Big(\frac{n}{n-i}\Big)^\gamma\Big)\, \, \big|\, \, \Event_{M,J,u}'\Big\}\, \Prob\Big\{\Event_{M,J,u}'\Big\}
\\ &+\Prob\Big(\Big[ \bigcup_{M,J,u}\Event_{M,J,u}'\Big]^c\Big)
\leq \Big(146\, \frac{n-i}{n} \Big)\, \sum_{M,J,u}\Prob(\Event_{M,J,u}) +\frac{n-i}{ n}\leq 147\, \frac{n-i}{ n}.
\end{align*}
Note that for any realization $(M, M', \sigma)$ of $(A_n, A_n', \sigma_n)$ we have
$\row_{\sigma(j)}(M')=\row_{\sigma(j)}(M)$ for all $j<q$, therefore
$$
 E_q=\spn \{\row_{\sigma_n(j)}(B_z)\}_{j<q}=\spn \{\row_{\sigma_n(j)}(B_z')\}_{j<q},
$$
Thus
\begin{align*}
\Prob\Big\{&\d\big(\row_{\sigma_n(i)}(B_z'), \, \spn_{j<q}\{\row_{\sigma_n(j)}(B_z')\})
\leq
\exp\Big(-\widetilde C\Big(\frac{n}{n-i}\Big)^\gamma\Big)\Big\}\leq 147\, \frac{n-i}{n}.
\end{align*}
In view of the independence of $\sigma_n$ and $A_n'$, we can replace the row
$\row_{\sigma_n(i)}(B_z')$ in the above formula with $\row_{\sigma_n(q)}(B_z')$
with no change to the probability estimates.
Since $A_n'$ and $A_n$ are equidistributed we can also replace $\row_{\sigma_n(q)}(B_z')$ and $\row_{\sigma_n(j)}(B_z')$
with $\row_{\sigma_n(q)}(B_z)$ and $\row_{\sigma_n(j)}(B_z)$. Finally note that in our range of $i$,
$\frac{n-i}{n}$ is equivalent to $\frac{n-q}{n}$ up to constant 2 and  that $n-n/d^3\leq q\leq n-n/\ln^{1/\gamma} n -n^{1/4}$.
This completes the proof of Lemma~\ref{l: step I}.
\qed

\subsection*{Acknowledgments}

P.Y.\ was supported by grant ANR-16-CE40-0024-01.
A significant part of this work was completed while the last three named authors were in residence at the
Mathematical Sciences Research Institute in Berkeley, California,
supported by NSF grant DMS-1440140, and the first two named authors visited the institute. The hospitality of MSRI and of the
organizers of the program on Geometric Functional Analysis and
Applications is gratefully acknowledged.

\nocite{*}

\address

\end{document}